\newcommand{\R}{\mathbb{R}}
\newcommand{\N}{\mathbb{N}}
\newcommand{\E}{\mathbb{E}}
\newcommand{\pp}{\mathbb{P}}
\newcommand{\kA}{\mathcal{A}}
\newcommand{\kO}{O}
\newcommand{\kP}{\mathcal{P}}
\newcommand{\kM}{\mathcal{M}}
\newcommand{\bq}{\boldsymbol{q}}
\newcommand{\bp}{\boldsymbol{p}}
\newcommand{\lin}{\left[\kern-0.15em\left[}
\newcommand{\rin} {\right]\kern-0.15em\right]}
\newcommand{\linf}{[\kern-0.15em [}
\newcommand{\rinf} {]\kern-0.15em ]}
\newcommand{\ilin}{\left]\kern-0.15em\left]}
\newcommand{\irin} {\right[\kern-0.15em\right[}
\newcommand{\sqchb} {\sqrt{\cosh(\beta)}}
\newtheorem{theorem}{Theorem}[section]
\newtheorem{lemma}[theorem]{Lemma}
\newtheorem{proposition}[theorem]{Proposition}
\newtheorem{remark}[theorem]{Remark}
\newtheorem{cond}[theorem]{Condition}
\newtheorem{example}[theorem]{Example}
\newcommand{\be}{\begin{equation}}
\newcommand{\ee}{\end{equation}}
\newcommand{\nn}{\nonumber}
\newcommand{\prob}{\mathbb P}
\newcommand{\expec}{\mathbb E}
\newcommand{\atanh}{{\rm atanh}}
\newcommand{\sss}{\scriptscriptstyle}
\numberwithin{equation}{section}
\newcommand{\vep}{\varepsilon}
\newcommand{\e}{{\rm e}}
\newcommand{\rem}[1]{}
\newcommand{\RvdH}[1]{\todo[inline, color=magenta]{#1}}
\newcommand{\Gib}[1]{\todo[inline, color=yellow]{#1}}
\def\1{{\mathchoice {1\mskip-4mu\mathrm l}      
{1\mskip-4mu\mathrm l}
{1\mskip-4.5mu\mathrm l} {1\mskip-5mu\mathrm l}}}
\newcommand{\indic}[1]{\1_{\{#1\}}}
\newcommand{\col}[1]{\textcolor[rgb]{0,0,0}{#1}}
\newcommand{\cla}[1]{\textcolor[rgb]{0,0,0}{#1}}
\newcommand{\claudio}[1]{\textcolor[rgb]{0,0,1}{#1}}
\newcommand{\colrev}[1]{\textcolor[rgb]{0,0,0}{#1}}
\newcommand{\mo}[1]{\textcolor[rgb]{0,0,0}{#1}}
\newcommand{\eqn}[1]{\begin{equation}#1\end{equation}}
\newcommand{\eqan}[1]{\begin{align}#1\end{align}}
\newcommand{\convd}{\stackrel{\sss {\mathcal D}}{\longrightarrow}}
\newcommand{\psinand}{\psi_n^{\rm \sss an, d}}
\newcommand{\psinaniid}{\psi_n^{\rm \sss an, D}}
\newcommand{\varphiand}{\varphi^{\rm \sss an, d}}
\newcommand{\Mand}{M^{\rm \sss an, d}}
\newcommand{\Msand}{M^{\rm \sss an, d}}
\newcommand{\varphianiid}{\varphi^{\rm \sss an, D}}
\newcommand{\Maniid}{M^{\rm \sss an, D}}
\newcommand{\Msaniid}{M^{\rm \sss an, D}}
\newcommand{\betacand}{\beta_c^{\rm \sss an, d}}
\newcommand{\betacaniid}{\beta_c^{\rm \sss an, D}}
\newcommand{\betacqud}{\beta_c^{\rm \sss qu, d}}
\newcommand{\betacquiid}{\beta_c^{\rm \sss qu, D}}
\newcommand{\bfdit}{\boldsymbol{d}}
\newcommand{\CMnd}{{\rm CM}_n(\boldsymbol{d})}
\newcommand{\Rbold}{{\mathbb R}}
\newcommand{\Ver}{U_n}
\newcommand{\pkn}{p_k^{\sss(n)}}
\newcommand{\dmax}{d_{\max}^{(n)}}
\newcommand{\dmin}{d_{\min}}
\newcommand{\bsp}{\boldsymbol{p}}
\newcommand{\bsq}{\boldsymbol{q}}
\newcommand{\bsqs}{\boldsymbol{q}^\star}
\newcommand{\bst}{\boldsymbol{t}}
\newcommand{\bsqn}{\boldsymbol{q}^{\sss(N)}}
\newcommand{\qn}{q^{\sss(N)}}
\newcommand{\sn}{s^{\sss(N)}}
\newcommand{\qs}{q^{\star}}
\newcommand{\changed}[1]{\color{black}#1\color{black}}
\begin{document}

\title{\col{Annealed Ising model on configuration models}}
\author{Van Hao Can}
\address{Van Hao Can, Research Institute for Mathematical Sciences, Kyoto University, 606--8502 Kyoto, Japan 
 \&	Institute of Mathematics, Vietnam Academy of Science and Technology, 18 Hoang Quoc Viet, 10307 Hanoi, Vietnam, cvhao89@gmail.com}
\author{Cristian Giardin\`a}
\address{Cristian Giardin\`a, University of Modena and Reggio Emilia, via G. Campi 213/b, 41125 Modena, Italy, cristian.giardina@unimore.it}
\author{Claudio Giberti}
\address{Claudio Giberti, University of Modena and Reggio Emilia, via G. Amendola 2, Pad. Morselli, 42122 Reggio E., Italy, claudio.giberti@unimore.it}
\author{Remco van der Hofstad}
\address{Remco van der Hofstad, Eindhoven University of Technology, P.O. Box 513, 5600 MB Eindhoven, The Netherlands, r.w.v.d.hofstad@tue.nl}


\maketitle

\section*{Abstract}
In this paper, we study the annealed ferromagnetic Ising model on the configuration model. In an annealed system, we take the average on both sides of the ratio {defining the Boltzmann-Gibbs measure of the Ising model}. In the configuration model, the degrees are specified. Remarkably, when the degrees are deterministic, the critical value 
of the annealed Ising model is the same as that for the quenched Ising model. For independent and identically distributed (i.i.d.) degrees, instead, the annealed critical value is strictly smaller than that of the quenched Ising model. This identifies the degree structure of the underlying graph as the main driver for the critical value. 
Furthermore, in both contexts (deterministic or random degrees), we provide the variational expression for the annealed pressure.
Interestingly, our rigorous results establish that only part of the heuristic conjectures in the physics literature 
were correct.

\vspace{1.cm}

\section*{R\'esum\'e}
Dans ce travail, nous \'etudions le mod\`ele d'Ising ferromagn\'etique ``annealed'' sur le mod\`ele de configuration. Dans un syst\`eme ``annealed'', nous prenons la moyenne des deux c\^ot\'es du rapport {d\'efinissant la mesure de Boltzmann-Gibbs du mod\`ele d'Ising}. Dans le mod\`ele de configuration, les degr\'es sont sp\'ecifi\'es. Remarquablement, lorsque les degr\'es sont d\'eterministes, la valeur critique
du mod\`ele d'Ising ``annealed'' est le m\^eme que celui du mod\`ele d'Ising ``quenched''. Pour des degr\'es ind\'ependants et distribu\'es de mani\`ere identique (i.i.d.), au contraire, la valeur critique ``annealed'' est strictement inf\'erieure \`a celle du mod\`ele d'Ising ''quenched''. Cela identifie la structure en degr\'es du graphique sous-jacent comme le principal moteur de la valeur critique.
Dans les deux contextes (degr\'es d\'eterministes ou al\'eatoires), nous fournissons l'expression variationnelle de la pression ``annealed''.
Fait int\'eressant, nos r\'esultats rigoureux \'etablissent que seule une partie des conjectures heuristiques de la litt\'erature de physique
\'etaient corrects.

\vspace{1.5cm}

{\em Keywords}: Annealed Ising model, configuration model, phase transition, critical temperature.

\newpage


\section{Motivation and results}
\label{sec-mot-res}
The Ising model is a paradigmatic model for magnetism, see \cite{Niss05, Niss09} for an extensive historical account, and the books by Ellis \cite{Elli85}, Bovier \cite{Bovi06} and Contucci and Giardin\`a \cite{ConGia13} for introductions to statistical mechanics from various perspectives.   Recently, the Ising model has gained in popularity since it gives a very simple model for {\em consensus reaching} in populations. Indeed, when we assume that friends are more likely to have the same opinion rather than opposing ones, then we can use the Ising model on the friendship network to model which opinion will prevail in a population and under which circumstances. As a result, the model has become popular amongst economists and social scientists as well, and later found further applications in neuroscience, etc. See for example Contucci, Gallo and Menconi \cite{ContGallMen08} for an example in social sciences, Kohring \cite{Kohr96} for an application in social impact, Fraiman, Balenzuela, Foss and Chialvo \cite{FraBalFosChi09} for an application to the brain and Bornholdt and Wagner \cite{BorWag02} for an example in economics.
Whereas the original Ising model was defined on a regular lattice, in recent applications more complex spatial structures (often in the form of a random graph) have been
considered to model system inhomogeneities.

Being a paradigmatic model for random variables that are dependent, the Ising model \mo{on random graphs} has also attracted considerable attention from the mathematics community, starting with Dembo and  Montanari \cite{DemMon10a}, see also \cite{DemMon10b, DemMonSun13} for an overview of statistical mechanics models on random graphs, as well as \mo{\cite[Chapter 5]{Hofs20}} for an extensive discussion of the recent results for the Ising model. Dembo and Montanari \cite{DemMon10a} prove the existence of the pressure per particle, and identify the critical \colrev{inverse temperature}, in the case where the degree distribution has finite variance. Dommers et al.\ \cite{DomGiaHof10} extend this also to power-law random graphs and identify the critical exponents in \cite{DomGiaHof12}. Central limit theorems were obtained in \cite{GiaGibHofPri15}. These results all apply in the general context of \col{the {\em quenched} Ising model on} locally-tree like random graphs, for which there is a quite complete understanding  (unfortunately, the non-classical central limit theorem at the critical point is not proved in great generality).

Another important approach to the study of disordered systems is the {\em annealed} setting, where the pressure per particle is obtained as the logarithm of the averaged 
partition function (contrary to the quenched setting where one averages the logarithm of the partition function). 
\colrev{Essentially the quenched setting} \changed{concerns}{} \colrev{thermodynamic properties when a typical (fixed) realization
of the disorder is considered, whereas in the annealed setting an average over the disorder distribution is taken
so that the spins `see' an average of the possible random graphs. For an extended discussion about 
quenched and annealed states, see for instance \cite{GiaGibHofPri15}.}
Often the annealed setting is easier to handle, and therefore
is used as an approximation to the quenched setting. Examples include spin glasses \cite{ConGia13}, for instance the Sherrington-Kirkpatrick model where  
the annealed partition function becomes trivial due to the independence of the couplings, and random polymers, where the comparison between quenched and annealed is used to distinguish
between relevant and irrelevant disorder \cite{dH}.

For our  setting of the Ising model on a random graph, the fact that the annealed setting is easier than the quenched is not obviously the case in the presence of non-independent edges, 
and also turns out to be wrong when doing the analysis. Essentially, the study of the quenched Ising model on locally-tree like random graphs is reduced to that of the quenched Ising model on random trees, that is solved by a fixed point equation. Here we can crucially rely on the fact that the pressure per particle is a continuous functional in the local-weak topology. The situation is quite different if one considers the {\em annealed} Ising model, since in this case the contribution of graph realizations containing loops can not be ignored.
On the contrary, since the partition function is an expectation of an exponentially large random variable, the contribution of atypical graph realizations (having extremely small probabilities)
to the average of the partition function might be substantial and prevented, so far, the possibility of a general analysis of the annealed setting.

In the physics literature, the folklore is that the annealed Ising model on a random graph is equivalent to
an inhomogeneous mean-field model where the coupling between the spins is proportional to their
degrees, see Bianconi \cite{bianconi} and Dorogovtsev et al. \cite{dorogovtsev2008critical}. However, this analogy often misses an exact proof, and this paper investigates this relation in detail for general configuration models.

From a mathematical perspective, the annealed Ising model could so far only be solved on two specific random graphs models: the generalized random graph 
\cite{DomGiaGibHofPri16}, where edges are independent, and the random regular graph \cite{Can17a, Can17b}, where the degrees are all equal  
(see also \cite{GiaGibHofPri16} where the configuration model with degrees 
that are either 1 or 2 was studied by mapping this model to a one-dimensional Ising model). 

In this paper, we extend the analysis of the annealed Ising model 
to the {\em configuration model}, using ideas from, and extending work of Can \cite{Can17a, Can17b}. The main innovations in our paper are as follows:
\begin{itemize}
\item[$\rhd$] The model includes degree variability, and dependence between edges that is due to the fact that the edges need to realize a prescribed
degree sequence;
\item[$\rhd$] The model turns out to be exactly solvable and therefore we identify the annealed pressure as well as the critical value;
\item[$\rhd$] The exact solution is obtained by a careful analysis of the annealed partition function 
with deterministic degrees using the mapping to a random matching problem derived by the first author
in \cite{Can17a}. In the case of i.i.d.\ degrees, this is complemented by a large deviation analysis for the empirical degree 
distribution under the annealed Ising model;
\item[$\rhd$]
Surprisingly, the annealed critical value equals the quenched one for {\em deterministic} degrees, 
while it is strictly smaller for {\em independent and identically distributed} (i.i.d.) degrees.
\end{itemize}

Our solution shows that, as predicted by physicists, the annealed Ising model 
on the configuration model with prescribed degrees is indeed equivalent to a mean-field model, 
i.e., the magnetization can be computed in terms of a scalar quantity that solves
a mean-field equation, see \eqref{v-def}. However, 
the exact mean-field consistency equation turns out to be substantially different from the
one predicted in \cite{dorogovtsev2008critical}. Also, the prediction of the critical value following from that analysis in general turn out to be wrong.

\subsection{Configuration model}
\label{sec-CM}
Fix an integer $n$ that will be the number of vertices in the random graph \col{and denote by $[n]$ the set $\{1,2,\ldots , n\}$}. Consider a sequence of degrees $\bfdit=(d_i)_{i\in[n]}$. 
Without loss of generality, we assume that $d_j\geq 1$ for all $j\in [n]$, since when $d_j=0$, vertex $j$ is isolated and can be removed from the graph. 
Furthermore, we assume that the total degree
    	\eqn{
    	\ell_n=\sum_{j\in [n]} d_j
    	}
is even. When $\ell_n$ is odd we increase the degree $d_n$ by 1. 
For $n$ large, this will not change the results and we will therefore
ignore this effect.

We wish to construct a graph such that $\bfdit=(d_i)_{i\in[n]}$ are the degrees of the $n$ vertices. 
To construct the multigraph where vertex $j$ has degree $d_j$ for all $j\in [n]$, we have $n$ separate vertices and incident to vertex $j$, we have $d_j$ half-edges. Every half-edge needs to be connected to another half-edge to form an edge, and by forming all edges we build the graph. For this, the half-edges are numbered in an arbitrary order from $1$ to $\ell_n$. We start by randomly connecting the first half-edge with one of the $\ell_n-1$ remaining half-edges, \colrev{chosen uniformly at random}. Once paired, two half-edges form a single edge of the multigraph, and the half-edges are removed from the list of half-edges that need to be paired. Hence, a half-edge can be seen as the left or the right half of an edge. We continue the procedure of randomly choosing and pairing the half-edges until all half-edges are connected, and call the resulting graph the {\it configuration model with degree sequence $\bfdit$}, abbreviated as $\CMnd$.

\subsection{Ising model}
\label{sec-IM}
We start by defining Ising models on finite graphs. Consider a graph sequence $(G_n)_{n \geq 1}$, where $G_n=(V_n,E_n)$, with vertex set $V_n=[n]$ and some random edge set $E_n$. To each vertex $i\in [n]$ we assign an Ising spin $\sigma_i = \pm 1$. A configuration of spins is denoted by \col{$\sigma=\{\sigma_i \colon i\in [n]\}$}. The \col{ferromagnetic} {\em Ising model on $G_n$} is then defined by the \col{Boltzmann-Gibbs} distribution 
	\eqn{
	\label{eq-boltzmann}
	\mu_n(\sigma) 
	= \frac{1}{Z_n(\beta, \col{B})} \exp \Big\{\beta \sum_{(i,j) \in E_n} \sigma_i \sigma_j + B \sum_{i \in [n]} \sigma_i\Big\}.
	}
Here, $\beta \geq 0$ is the inverse temperature and \col{$B$ is the external magnetic field}.
The partition function $Z_n(\beta,{B})$ is a 
normalization factor given by
	\eqn{
	\label{eq-partit-function}
	Z_n(\beta,{B})
	= \sum_{\sigma \in \{-1,+1\}^n} \exp \Big\{\beta \sum_{(i,j) \in E_n} \sigma_i \sigma_j + B\sum_{i \in [n]} \sigma_i\Big\}.
	}
We let $\langle \cdot \rangle_{\mu_n}$ denote the expectation with respect to the Ising measure, i.e., for every function $f\colon \{-1,+1\}^n \rightarrow \Rbold$,
	\eqn{
	\langle f\rangle_{\mu_n} =  \sum_{\sigma \in \{-1,+1\}^n} f(\sigma) \mu_n(\sigma).
	}
The main quantity we shall study is the {\em pressure} per particle, which is defined as
	\eqn{
	\label{def-pressure-general}
	\psi_n(\beta, B) = \frac{1}{n} \log Z_n(\beta, B),
	}
in the thermodynamic limit of $n \rightarrow \infty$. It turns out that the pressure characterizes the behavior in the Ising model, and the phase transition and related quantities can be retrieved from it by taking appropriate derivatives with respect to the parameters $B$ and $\beta$. For example,  the {\em magnetization} $M_n(\beta,B)$ is given by
	\eqn{
	\label{def-mag-general}
	M_n(\beta,B) = \Big\langle \frac{1}{n} \sum_{i\in[n]} \sigma_i \Big\rangle_{\mu_n}=\frac{\partial}{\partial B} \psi_n(\beta, B).
	}

When dealing with a random graph, such as $\CMnd$, the above variables are all {\em random}. Then, the above corresponds to the so-called {\em \col{random} quenched} setting. However, we often wish to average out over the randomness of the graph. One way to do this, which is called the {\em averaged quenched setting}, is to simply take the expectation in \eqref{eq-boltzmann}.
In this setting, when computing an expectation with respect to the Ising measure  we deal with a ratio of random variables (since they are functions of the random graph), and both numerator and denominator grow exponentially in the large graph limit. Typically, such problems are difficult, as one still has to deal with the ratio of large random variables. 

A common alternative approach is to take the average over the randomness both in the numerator as well as the denominator, giving rise to the {\em annealed} measure. In the annealed \cla{setting}, the measure \mo{\eqref{eq-boltzmann}} is replaced by
	\eqn{
	\label{eq-boltzmann-annealed}
	\mu_n^{\rm \sss {an,d}}(\sigma) 
	= \frac{1}{\expec[Z_n(\beta, {B})]} \expec\Big[\exp \Big\{\beta \sum_{(i,j) \in E_n} \sigma_i \sigma_j + B \sum_{i \in [n]}  \sigma_i\Big\}\Big].
	}
\colrev{Here $\expec$ is the expectation over the randomness of the  graph.
The superscript ``${\rm d}$'' is added to remember that we are dealing with the 
configuration model with {\em deterministic} degrees. Later (see Section \ref{sec-res-iid})
we shall also consider {\em random} degrees, and then  $\expec$ will denote
the averages over all the sources of randomness of the graph.}

The main quantity \changed{of interest}{} 
is the {\em annealed pressure}, that is defined as
\eqn{
	\label{def-pressure-ann}
	\psinand(\beta, B) = \frac{1}{n} \log \expec[Z_n(\beta, B)].
	}

See \cite[Chapter 5]{Hofs20} for an extensive discussion of these different settings, as well as an overview of the results on the Ising model on random graphs.

We are interested in the thermodynamic limit of our models, i.e. in the behavior as $n\to \infty$.  Define $\varphiand(\beta, B):=\lim_{n\to \infty} \psinand(\beta, B)$. For the quenched setting, \mo{the thermodynamic limit of the pressure was proved to exist}, while for the annealed setting, we prove the existence \mo{of $\varphiand(\beta, B)$} in this paper. 
\mo{Being the limit of a sequence of convex functions both in $\beta$ and in $B$, the infinite volume pressure $\varphiand(\beta, B)$ is also convex.}
\mo{Criticality of the model manifests itself in the behavior of the {\em annealed spontaneous magnetization}
	\eqn{
	\label{spon-M-def-qu-ann-d}
	\Msand(\beta):=\lim_{B\searrow 0} \frac{1}{B}[\varphiand(\beta, B) - \varphiand(\beta,0)],
	}
which is well-defined since convex functions always have right derivates.}
In terms of these quantities, the {\em critical inverse temperature}  is defined as
	\be\label{def_beta_c}
	\betacand:=\inf \{\beta > 0\colon \Msand(\beta)>0\}.
	\ee 
An identical characterization holds in the quenched setting, \mo{now in terms
of the quenched spontaneous magnetization.}
 Thus, depending on the setting, we can obtain the \mo{{\em  quenched}} and {\em annealed critical points} denoted by $\betacqud$ and $\betacand$, respectively.  When $0<\beta_{c} <\infty$, we say that the system undergoes a {\em phase transition} at $\beta=\beta_{c}$. \changed{We will refer to $\betacqud$ and $\betacand$ as {\em critical values}, to avoid writing `critical inverse temperature' many times.}{}

It is possible that the critical value $\beta_c$, at which the ferromagnetic phase transition -- if any -- occurs, is different for the quenched and annealed settings (it is not hard to prove that $\beta_c$ is the same in the random and averaged quenched setting). However, the general belief in physics is that, \col{for the ferromagnetic Ising model on random graphs}, the annealed and quenched cases have the same critical exponents, and are thus in the same universality class. In this paper, we study the annealed Ising model on the configuration model. We continue by stating our result, both in the case where the degrees are deterministic, as well as the case where they are i.i.d.\ (see Sections \ref{sec-res-deter} and \ref{sec-res-iid}, respectively).

\subsection{Results for deterministic degrees}
\label{sec-res-deter}
We impose certain \emph{regularity conditions} on the degree sequence $\bfdit$. In order to state these assumptions, we introduce some notation. We denote the degree of a uniformly chosen vertex $\Ver$ in $[n]$ by $D_n=d_{\Ver}$. The random variable $D_n$ has distribution function $F_n$ given by
    \eqn{
    \label{def-Fn-CM}
    F_n(x)=\frac{1}{n} \sum_{j\in [n]} \indic{d_j\leq x},
    }
which is the {\em empirical distribution of the degrees.}
We assume that the vertex degrees satisfy the following \emph{regularity conditions:}

\begin{cond}[Regularity conditions for vertex degrees]
\label{cond-degrees-regcond}
~\\
{\bf (a) Weak convergence of vertex \col{degrees}.}
There exists a distribution function $F$ such that, as $n\rightarrow \infty$,
    \eqn{
    \label{Dn-weak-conv}
    D_n\convd D,
    }
where $D_n$ and $D$ have distribution functions $F_n$ and $F$, respectively, and $\convd$ denotes convergence in distribution.
Further, we assume that $F(0)=0$, i.e., $\prob(D\geq 1)=1$.\\
{\bf (b) Convergence of average vertex degrees.} As $n\rightarrow \infty$,
    \eqn{
    \label{conv-mom-Dn}
    \expec[D_n]\rightarrow \expec[D]<\infty,
    }
where $D_n$ and $D$ have distribution functions $F_n$ and $F$ from part (a), respectively.\\
{\bf (c) Convergence of second moment vertex degrees.} As $n\rightarrow \infty$,
    \eqn{
    \label{conv-sec-mom-Dn}
    \expec[D_n^2]\rightarrow \expec[D^2]\in (0,\infty],
    }
where again $D_n$ and $D$ have distribution functions $F_n$ and $F$ from part (a), respectively.\\
{\bf (d) Bound on the maximum degree.} Let $\dmax = \max\{d_i \colon i \in [n]\}$ be the maximum degree
of $n$ vertices. As $n\rightarrow \infty$,
	\eqn{
	\label{max-deg}
	\dmax=o(n/\log{n}).
	}
\end{cond}

While Condition \ref{cond-degrees-regcond}(c) is not necessary for the thermodynamic limits proved in this paper, the second moment of the degrees is needed to identify the critical value of the Ising model on the configuration model, which is why we have added it. Note that we allow for $\expec[D^2]=\infty$, in which case the asymptotic degree distribution has an infinite second moment. Condition \ref{cond-degrees-regcond}(c) rules out cases where $\expec[D_n^2]$ oscillates, in which case we cannot define the critical value. 

As for Condition \ref{cond-degrees-regcond}(d), the bound on the maximal degree is quite weak, in that it will be true for most degree distributions, and is used to characterize the pressure per particle. See (2.10) and (2.14) below.

\mo{Two canonical examples of such degree sequences 
that satisfy Condition \ref{cond-degrees-regcond}} 
are when we take $d_i=[1-F]^{-1}(i/n)$ \mo{for \changed{$i\in[n]$}}, where $F$ is the distribution function of an integer-valued random variable, and when $(d_i)_{i\in[n]}$ constitutes a \col{{\em realization} of an} i.i.d.\ sequence of random variables with distribution function $F$. In the latter case, beware that in the annealed measure in \eqref{eq-boltzmann-annealed}, we do {\em not} take the average with respect to the randomness of the degrees. 
\col{The convergence results in this paper are then meant to holds {\em in probability}.}
See \cite[Chapter 7]{Hofs17} for an extensive discussion of the configuration model and the Degree Regularity Condition \ref{cond-degrees-regcond}. 

The \col{Ising model on the configuration model with deterministic degrees} has been investigated in substantial detail in the quenched setting. One of the main results is that the thermodynamic limits of the pressure and magnetization exist, and that the Ising model has a phase transition with the quenched critical value given by 
	\eqn{
	\label{betac-qu}
	\betacqud = \atanh(1/\nu),
	}
where 
	\eqn{
	\label{nu-def}
	\nu=\frac{\expec[D(D-1)]}{\expec[D]}
	}
equals the asymptotic expected forward degree in $\CMnd$. Our main result for the annealed Ising model on the configuration model with deterministic degrees is the following theorem, which settles the same result in the annealed setting:

\begin{theorem}[Thermodynamic limit of the annealed pressure: deterministic degrees]
\label{thm-pressure-CM-ann-deter}
Consider $\CMnd$ where the degree sequence satisfies Conditions \ref{cond-degrees-regcond}(a)-(b) and (d). Then, for all \col{$\beta \ge 0$} and $B\in\Rbold$, the thermodynamic limit 
of the annealed pressure exists, i.e., as $n \to \infty$,
	\eqn{
	\label{pressure-CM-ann-deter}
	\psinand(\beta,B) \rightarrow \varphiand(\beta, B) ,	
	}
where $\varphiand(\beta, B)$ is defined in \eqref{varphiand-def} below.
\end{theorem}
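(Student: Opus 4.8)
The plan is to turn $\mathbb{E}[Z_n(\beta,B)]$ into an explicit finite sum and then extract its exponential growth rate by a Laplace (saddle-point) analysis, following and extending the random-matching approach of Can~\cite{Can17a}. First I would fix a spin configuration $\sigma$ and condition on it: since the edge set of $\CMnd$ is a uniformly chosen perfect matching of the $\ell_n$ half-edges, and since a matched pair of half-edges attached to vertices $i,j$ contributes $\e^{\beta}$ when $\sigma_i=\sigma_j$ and $\e^{-\beta}$ otherwise, the quantity $\mathbb{E}\big[\exp\{\beta\sum_{(i,j)\in E_n}\sigma_i\sigma_j\}\big]$ depends on $\sigma$ only through $\ell_n^{+}(\sigma):=\sum_{i:\,\sigma_i=+1}d_i$, the total degree of the plus-spins. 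Splitting the uniform matching according to the number $m$ of bichromatic (plus--minus) pairs then gives a closed form in terms of binomials and double factorials. Summing over $\sigma$ and grouping spin configurations by the numbers $k_d=\#\{i:d_i=d,\ \sigma_i=+1\}$ of plus-vertices in each degree class $d$ --- equivalently, by the pair $(k,s)=\big(\sum_d k_d,\ \sum_d d\,k_d\big)$, since $\ell_n^{+}=s$ and both the $m$-factor and the field term depend on the profile only through $k$ and $s$ --- I obtain
\begin{align*}
\mathbb{E}[Z_n(\beta,B)]=\frac{\e^{\beta\ell_n/2-Bn}}{(\ell_n-1)!!}\sum_{k,s,m}&\e^{2Bk}\,G_n(k,s)\\
&\times\binom{s}{m}\binom{\ell_n-s}{m}\,m!\,(s-m-1)!!\,(\ell_n-s-m-1)!!\,\e^{-2\beta m},
\end{align*}
where $n_d=\#\{i:d_i=d\}$ and $G_n(k,s)=\sum\prod_d\binom{n_d}{k_d}$, the sum running over profiles $(k_d)$ with $\sum_d k_d=k$ and $\sum_d d\,k_d=s$.

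All summands are nonnegative and, by Condition~\ref{cond-degrees-regcond}(b), $\ell_n=O(n)$, so there are only polynomially many triples $(k,s,m)$ and
\[
\psinand(\beta,B)=\frac1n\max_{k,s,m}\log(\text{summand})+o(1).
\]
Inside a summand the coefficient $G_n(k,s)$ is itself a sum over profiles, but since the number of degree classes is at most $\dmax=o(n/\log n)$ by Condition~\ref{cond-degrees-regcond}(d), the prefactor $\prod_d(n_d+1)$ bounding the number of profiles has logarithm $o(n)$, so $\frac1n\log G_n(k,s)=\frac1n\max_{(k_d)}\log\prod_d\binom{n_d}{k_d}+o(1)$ as well.

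Next I would carry out the Laplace asymptotics. Writing $k=tn$, $s=\varsigma n$, $m=\mu n$ and $k_d=q_d n_d$, I apply Stirling's formula to every factor --- using $(2j-1)!!=(2j)!/(2^{j}j!)$ for the double factorials, the entropy asymptotics for binomials, and $\frac1n\log\prod_d\binom{n_d}{k_d}\approx\sum_d\tfrac{n_d}{n}h(q_d)$ with $h$ the binary entropy for the profile count. Passing to $n\to\infty$ uses $D_n\convd D$ together with $\mathbb{E}[D_n]\to\mathbb{E}[D]<\infty$ (Conditions~\ref{cond-degrees-regcond}(a)--(b)), the latter supplying the uniform integrability needed to control the possibly unbounded degree support when taking the limit of the profile entropy and of $\ell_n/n$. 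The rescaled logarithm of the summand then converges, uniformly on compact sets of the feasible rescaled variables, to an explicit continuous function $\mathcal{F}_{\beta,B}$, and I would \emph{define} $\varphiand(\beta,B)$ to be its supremum over the feasible region --- this is exactly \eqref{varphiand-def} --- and check that the stationarity conditions of this variational problem reduce to the scalar mean-field equation \eqref{v-def} (the degree classes couple only through the two scalars $k$ and $s$, so Legendre duality collapses the profile optimization, and ultimately the whole problem, to a one-parameter fixed-point equation).

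Finally, the matching upper and lower bounds are routine once the above is in place: the upper bound follows from $\mathbb{E}[Z_n]\le(\#\text{triples})\cdot(\#\text{profiles})\cdot\max(\text{summand})$ combined with the logarithmic-order control of the two prefactors; the lower bound follows by keeping a single summand at an integer point $(k,s,m)$ and a single degree profile $(k_d)$ approximating a maximizer of $\mathcal{F}_{\beta,B}$, using continuity of the Stirling approximations. This yields $\psinand(\beta,B)\to\varphiand(\beta,B)$. I expect the main obstacle to lie in the Laplace step, and specifically in two points within it: obtaining the precise exponential rate of the inner $m$-sum --- equivalently, the large-deviation rate for the number of bichromatic edges in a uniform perfect matching of $\ell_n^{+}$ plus- and $\ell_n-\ell_n^{+}$ minus-half-edges --- \emph{uniformly} in $\ell_n^{+}$; and controlling the degree heterogeneity, where the profile entropy is a priori an optimization over the infinite-dimensional simplex of occupation fractions $(q_d)_{d\ge1}$, so that one must show both the convergence of the prelimit entropies to it and the attainment of the supremum --- precisely the place where Condition~\ref{cond-degrees-regcond}(d) and the convergence of the empirical degree distribution do the real work.
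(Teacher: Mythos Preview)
Your proposal is correct and follows essentially the same route as the paper: rewrite $\mathbb{E}[Z_n]$ as a sum over degree-class profiles $(k_d)$, use Condition~\ref{cond-degrees-regcond}(d) to show the profile count is $\e^{o(n)}$, apply Stirling and Laplace, and pass to the limit via Conditions~\ref{cond-degrees-regcond}(a)--(b). The only structural difference is that the paper does not keep the bichromatic-edge variable $m$ explicit but instead quotes the uniform asymptotic $\frac{1}{m}\log g_\beta(k,m)=F_\beta(k/m)+O(1/m)$ from \cite[Lemma~2.1]{Can17a}, thereby absorbing your inner $m$-optimization into the closed-form $F_\beta$ of \eqref{Fbeta-def}--\eqref{fbeta-def} before optimizing over the profile; this is exactly the step you singled out as the main obstacle, and citing it cleans up the argument considerably.
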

\medskip

We next identify the critical value of the annealed Ising model on the configuration model:

\begin{theorem}[Critical value of annealed Ising model: deterministic degrees]
\label{thm-crit-CM-ann-deter}
Consider $\CMnd$ where the degree distribution satisfies Conditions \ref{cond-degrees-regcond}(a)-(d). Then, the critical value $\betacand$ equals
	\eqn{
	\label{betac-an-d}
	\betacand=\betacqud = \atanh(1/\nu),
	}
where $\nu$ is the expected forward degree in $\CMnd$ defined in \eqref{nu-def}, and $D$ is the asymptotic degree whose existence is stated in  Conditions \ref{cond-degrees-regcond}(a).
\end{theorem}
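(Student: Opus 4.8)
The plan is to read off $\betacand$ directly from the variational description of $\varphiand(\beta,B)$ supplied by Theorem \ref{thm-pressure-CM-ann-deter}: by \eqref{varphiand-def} and the mean-field equation \eqref{v-def}, the annealed pressure is expressed through a single scalar order parameter, so that $\varphiand(\beta,B)=\sup_{v}\mathcal F_{\beta,B}(v)$, where $\mathcal F_{\beta,B}$ depends only on $\beta$, $B$ and the limiting degree law $F$, and whose stationarity condition is the fixed-point equation \eqref{v-def} for an associated Curie--Weiss-type map $\Phi_{\beta,B}$. The first step is a Danskin/envelope argument: since $\varphiand(\beta,\cdot)$ is convex and $\mathcal F_{\beta,B}(v)$ is smooth and, for $B>0$, admits a \emph{unique} maximizer $v^\star(\beta,B)$ depending monotonically on $B$, the right derivative of $\varphiand$ at $B=0$ equals $\partial_B\mathcal F_{\beta,B}(v)\big|_{B=0,\,v=v^\star(\beta,0^+)}$ with $v^\star(\beta,0^+):=\lim_{B\searrow0}v^\star(\beta,B)$. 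A short computation from the explicit form of $\mathcal F$ then shows that $\Msand(\beta)$ is a strictly increasing function of $v^\star(\beta,0^+)$ that vanishes precisely when $v^\star(\beta,0^+)=0$, so the whole problem reduces to deciding whether $v^\star(\beta,0^+)$ is zero or positive as a function of $\beta$.

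The core of the proof is therefore the bifurcation analysis of $v\mapsto\Phi_{\beta,0}(v)$ on $[0,\infty)$, for which $v=0$ is always a fixed point. I would establish: (i) $\Phi_{\beta,0}$ is increasing and strictly concave on $[0,\infty)$ with $\Phi_{\beta,0}(0)=0$, so it has a (necessarily unique) strictly positive fixed point if and only if $\Phi_{\beta,0}'(0)>1$; (ii) by a direct calculation $\Phi_{\beta,0}'(0)=\nu\tanh\beta$ with $\nu=\E[D(D-1)]/\E[D]$ --- this is exactly where Condition \ref{cond-degrees-regcond}(c) enters, guaranteeing that $\nu\in(0,\infty]$ is well defined, with the convention $\nu\tanh\beta=+\infty$ and $\atanh(1/\nu)=0$ when $\E[D^2]=\infty$; (iii) when $\nu\tanh\beta\le1$ the only nonnegative fixed point is $v=0$, and by monotonicity of $v^\star(\beta,B)$ in $B$ and continuity of $\Phi$ one gets $v^\star(\beta,0^+)=0$, hence $\Msand(\beta)=0$; (iv) when $\nu\tanh\beta>1$ there is a unique positive fixed point $v^\star(\beta,0)$ and $v^\star(\beta,B)\downarrow v^\star(\beta,0)>0$ as $B\searrow0$, hence $\Msand(\beta)>0$. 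Combining (iii)--(iv) with \eqref{def_beta_c} gives $\betacand=\inf\{\beta>0:\nu\tanh\beta>1\}=\atanh(1/\nu)=\betacqud$; when $\nu=\infty$ this infimum is $0$, again consistent with $\atanh(1/\nu)=0$.

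The main obstacle I expect is step (iv): controlling the maximizer $v^\star(\beta,B)$ as $B\searrow0$ and, in particular, ruling out a first-order transition, i.e.\ excluding that $\Msand(\beta)$ jumps to a strictly positive value already at some $\beta<\atanh(1/\nu)$. To handle this I would prove that for each $B>0$ the objective $\mathcal F_{\beta,B}$ has a unique stationary point (strict concavity after a suitable change of variables, or a monotone-iteration/contraction argument for $\Phi_{\beta,B}$ using that it is increasing and concave), that $B\mapsto v^\star(\beta,B)$ is nondecreasing (via monotonicity of the fixed-point iteration in $B$, or a correlation inequality for the annealed measure), and that every subsequential limit of $v^\star(\beta,B)$ as $B\searrow0$ is a nonnegative fixed point of $\Phi_{\beta,0}$ --- which by (i)--(ii) is forced to be $0$ in the subcritical regime and equals $v^\star(\beta,0)$ in the supercritical regime. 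Two further technical points are: verifying the concavity and monotonicity of $\Phi_{\beta,0}$ honestly from its definition in \eqref{v-def} (a calculus exercise, but one that matters since the annealed mean-field map differs from the physicists' prediction in \cite{dorogovtsev2008critical}), and the truncation argument needed to evaluate $\Phi_{\beta,0}'(0)=\nu\tanh\beta$ in the degenerate case $\E[D^2]=\infty$.
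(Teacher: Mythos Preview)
Your outline is the natural ``Curie--Weiss bifurcation'' template, and the local analysis is essentially right: linearizing the stationarity equation \eqref{v-def} around the trivial solution $w=1$ at $B=0$ does single out $\atanh(1/\nu)$ as the bifurcation point (in the paper's parametrization, $\Phi'(1)=\tfrac{1}{2}(1-\e^{-2\beta})\expec[D^\star]$, which crosses $1$ at the same $\beta$; whether this literally equals $\nu\tanh\beta$ depends on which $\Phi$ you write down, but the threshold is the same). The envelope/Danskin step is also what the paper does in \eqref{msub}--\eqref{hsb} and \eqref{pluto}--\eqref{pluto4}.

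The genuine gap is your step (i): you assert that $\Phi_{\beta,0}$ is strictly concave on $[0,\infty)$, and this is what carries the entire global argument (uniqueness of positive fixed point, exclusion of first-order jumps, uniqueness of maximizer for $B>0$). But the fixed-point map here is $w\mapsto f_\beta\bigl(1-\expec[(1+w^{D^\star})^{-1}]\bigr)$ with $f_\beta$ defined in \eqref{fbeta-def}, a composition of a square-root rational function with a nonlinear average over $D^\star$. This is \emph{not} a calculus exercise; there is no evident reason for global concavity to hold, and the paper never claims or proves it (indeed, uniqueness of the stationary point of $G$ is left open). Without concavity, your argument for (iii) collapses: you cannot exclude multiple nontrivial fixed points of $\Phi_{\beta,0}$ even when $\Phi_{\beta,0}'(0)\le 1$, so you cannot rule out a first-order transition below $\atanh(1/\nu)$.

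The paper replaces global concavity by two different, more robust devices. For the subcritical bound it uses a \emph{different} concavity---the concavity of $x\mapsto a/(a+w^x)$ in $x$ for fixed $w\in(0,1)$---to apply Jensen's inequality and trap every solution $w$ of \eqref{v-def} between $1$ and the unique root of an explicit polynomial $\tilde h_a(w)=cw^m-w^{m-1}+aw-ac$ with $m=\expec[D^\star]$; an implicit-function-theorem computation then shows this root tends to $1$ with bounded derivative as $B\searrow 0$, forcing all maximizers to the trivial point. For the supercritical bound, no global uniqueness is used at all: the paper computes the second variation $G''$ along the one-parameter family $(s_k(w,B))_k$ and shows it is strictly positive near $(w,B)=(1,0)$ precisely when $\beta>\atanh(1/\nu)$, so \emph{no} maximizer can sit near $w=1$; compactness of the constraint set $\mathcal A$ plus the finite-volume bound \eqref{mnet}--\eqref{gxy} then yields $\Msand(\beta)>0$. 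Both parts avoid your concavity hypothesis; if you want to salvage your route, you would need an independent proof that $\Phi_{\beta,0}$ is concave (or at least that \eqref{v-def} has at most one solution in $(\e^{-2\beta},1)$ for each $B\ge0$), and I do not see how to get that cheaply.
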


The above result is quite surprising, as it states that\mo{, when the degrees are fixed,} the configuration model cannot really favorably arrange itself so as to decrease the critical value of the Ising model on it. 
\colrev{Some insight on this result can be obtained by comparing to the corresponding findings for 
the Ising model on the Generalized Random Graph (GRG), where} \changed{instead}{} \colrev{the
annealed critical value is smaller that the quenched critical value \cite{GiaGibHofPri16}. The reason for this  is to be found in the fact that under the annealed measure the GRG random graph re-arranges itself by increasing the typical total number of edges (see \cite{DomGiaGibHof18}). As a consequence, by the second Griffiths inequality, the critical temperature increases. 
A re-arrangement of the degree sequence to increase the number of edges is, of course, 
not possible in the configuration model with prescribed deterministic degrees.}


\subsection{Results for i.i.d.\ degrees}
\label{sec-res-iid}
In this section, we continue with the behavior for i.i.d.\ degrees, where now we also average over the randomness in the degrees.
\mo{We denote the finite-volume annealed pressure 
of the annealed Ising model on the configuration models with degrees that are i.i.d.\ copies of the random variable $D$ by} 
\be
\label{pressure-ann-iid}
\psinaniid(\beta,B) =  \frac{1}{n} \log \expec[Z_n(\beta, B)].
\ee
\mo{The superscript ``${\rm D}$'' is added to remember that we are dealing with the 
configuration model with random degrees.} 
\mo{We define $\varphianiid(\beta, B):=\lim_{n\to \infty} \psinaniid(\beta, B)$. 
Provided that this limit is not $\infty$,
the annealed spontaneous magnetization
and the inverse critical temperature
$\Msaniid(\beta)$ and $\betacaniid$ are defined in  \mo{a similar} way as in 
\eqref{spon-M-def-qu-ann-d} and \eqref{def_beta_c}, respectively.}
 We remark that the {\em quenched} critical inverse temperature $\betacquiid$ is equal to $\betacand$, where the asymptotic degree distribution $D$ in Condition \ref{cond-degrees-regcond}(a) is the degree distribution.

We emphasize that the expectation $\mathbb{E}$ in \eqref{pressure-ann-iid} includes averaging over two sources of randomness: firstly, for a given realization of the degrees, we average over all possible random graphs that can be formed with that given degrees sequence; secondly, we average over the randomness of the degrees.
It turns out that this has a rather dramatic effect, contrary to the setting of deterministic degrees, that is very much alike
to the quenched setting. Indeed, when the degrees have sufficiently thick tails, the annealed pressure can even be infinite:

\begin{proposition}[Infinite pressure for annealed Ising model with i.i.d.\ degrees]
\label{prop-inf-press-iid}
The pressure of the annealed Ising model on the configuration model with i.i.d.\  degrees that are copies of the random
variable $D$  \mo{with distribution function $F$} equals $\psinaniid(\beta,B)=\infty$ precisely when
	\eqn{
	\label{press-inf-ann-iid}
	\expec[\e^{\beta D/2}]=\infty.
	}
\end{proposition}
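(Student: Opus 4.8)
The plan is to prove the dichotomy by two elementary \emph{pathwise} bounds on the partition function $Z_n(\beta,B)$ from \eqref{eq-partit-function}, exploiting the fact that in $\CMnd$ the number of edges is the deterministic quantity $\ell_n/2$ once the degree sequence $\bfdit$ is fixed; all the relevant randomness beyond the degrees therefore disappears from the bounds.

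\emph{Infinite pressure when $\expec[\e^{\beta D/2}]=\infty$.} First I would lower bound $Z_n(\beta,B)$ by retaining only the contribution of the constant configuration $\sigma\equiv+1$ (or $\sigma\equiv-1$ when $B<0$). Since $\sigma_i\sigma_j=1$ on every edge, this single summand equals $\exp\{\beta|E_n|+B\sum_i\sigma_i\}$, which is $\exp\{\tfrac{\beta}{2}\ell_n+Bn\}$ if $B\ge0$ and $\exp\{\tfrac{\beta}{2}\ell_n-Bn\}$ if $B<0$, in both cases at least $\exp\{\tfrac{\beta}{2}\ell_n\}=\prod_{i\in[n]}\e^{\beta d_i/2}$; this holds for \emph{every} realization of the pairing because $|E_n|=\ell_n/2$ depends only on $\bfdit$. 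Taking the annealed expectation and using that the degrees are i.i.d.\ copies of $D$,
\[
\expec[Z_n(\beta,B)]\ \ge\ \expec\Big[\prod_{i\in[n]}\e^{\beta d_i/2}\Big]\ =\ \big(\expec[\e^{\beta D/2}]\big)^n,
\]
which is $+\infty$ for every $n\ge1$ as soon as $\expec[\e^{\beta D/2}]=\infty$; hence $\psinaniid(\beta,B)=\infty$.

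\emph{Finite pressure when $\expec[\e^{\beta D/2}]<\infty$.} For the converse I would bound each of the $2^n$ summands in \eqref{eq-partit-function} from above: $\sigma_i\sigma_j\le1$ on every edge gives $\beta\sum_{(i,j)\in E_n}\sigma_i\sigma_j\le\tfrac{\beta}{2}\ell_n$, and $B\sum_i\sigma_i\le|B|n$. Hence, again pathwise, $Z_n(\beta,B)\le 2^n\e^{|B|n}\e^{\beta\ell_n/2}=(2\e^{|B|})^n\prod_{i\in[n]}\e^{\beta d_i/2}$, and taking the annealed expectation yields $\expec[Z_n(\beta,B)]\le(2\e^{|B|})^n\big(\expec[\e^{\beta D/2}]\big)^n<\infty$, so $\psinaniid(\beta,B)$ is finite for every $n$. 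Combining the two displays proves the claimed equivalence.

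\emph{Minor caveats and the main point.} For completeness I would note that the parity correction raising $d_n$ by one when $\ell_n$ is odd multiplies both bounds by the constant factor $\e^{\beta/2}$ and is therefore irrelevant to finiteness, and that degree-zero vertices (if $\prob(D\ge1)=1$ is not assumed) merely contribute bounded factors $2\cosh(B)$. I do not expect a genuine obstacle here: the only thing to observe is that, unlike models with independent edges, the edge count of $\CMnd$ is frozen by the degree sequence, which is precisely what makes $\e^{\beta D/2}$ — rather than some other transform of $D$ — the quantity governing integrability of the annealed partition function.
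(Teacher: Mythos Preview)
Your proof is correct and is essentially the same as the paper's: both lower bound $Z_n(\beta,B)$ by the single all-plus configuration to get $\expec[Z_n]\ge \e^{nB}\big(\expec[\e^{\beta D/2}]\big)^n$, and both upper bound by $2^n\e^{n|B|}\big(\expec[\e^{\beta D/2}]\big)^n$ using $\sigma_i\sigma_j\le 1$ on every edge. The only differences are cosmetic (your case split on the sign of $B$ and your explicit remarks on the parity fix and degree-zero vertices), none of which affect the argument.
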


Proposition \ref{prop-inf-press-iid} implies that for power-law degrees, the pressure is simply equal to infinity. The boundary occurs when the degree distribution has an exponential tail, in which case, the pressure per particle is infinity when $\beta$ is large, but not when it is small. In this case, 
it is natural to believe that the critical value $\betacaniid$ is strictly smaller than the value $\beta$ where the pressure per particle explodes. 

One could say that the anomaly that $\psinaniid(\beta,B)=\infty$ in Proposition \ref{prop-inf-press-iid} can only occur due to the lack of a single-edge constraint. Thus, it would be interesting to investigate the setting where multiple edges are merged and self-loops removed, which is sometimes called the {\em erased configuration model}. In this case, it is not hard to see that even though $\psinaniid(\beta,B)<\infty$ is always true, it also holds that $\psinaniid(\beta,B)$ tends to infinity with $n$.
\medskip

From now on, we assume that $\beta$ and $D$ are fixed so that $\expec[\e^{\beta D/2}]<\infty$. Then, the following theorem describes the thermodynamic limit of the pressure per particle for the annealed Ising model on the configuration model with i.i.d.\ degrees:

\begin{theorem}[Thermodynamic limit of the annealed pressure: i.i.d.\ degrees]
\label{thm-pressure-CM-ann-iid}
Consider \col{the configuration model} where the degrees are  i.i.d.\ \mo{copies} of the random variable $D$ with distribution function $F$. 
 \col{Then, for all $\beta$ satisfying $\expec[\e^{\beta D/2}]<\infty$ and for all $B\in\Rbold$}, the thermodynamic limit of the annealed pressure exists, i.e., \col{as $n \to \infty$},
	\eqn{
	\label{pressure-CM-ann-iid}
	\psinaniid(\beta,B) \rightarrow \varphianiid(\beta, B),
	}
where $\varphianiid(\beta, B)$ is defined in \eqref{iid} below.
Further, $ \varphianiid(\beta, B)> \varphiand(\beta, B ; \boldsymbol{p})$, where $\varphiand(\beta, B ; \boldsymbol{p})$ denotes the pressure of
the configuration model with a deterministic degree sequence having  asymptotic degree distribution $\boldsymbol{p} = (p_k)_{k\geq 1}$ with $p_k=\prob(D=k)$.
\end{theorem}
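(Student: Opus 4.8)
\bigskip
\noindent\textit{Proof proposal.} The strategy is to condition on the i.i.d.\ degree sequence $\bfdit=(d_i)_{i\in[n]}$, use the deterministic-degree result of Theorem~\ref{thm-pressure-CM-ann-deter}, and then run a Laplace/large-deviation analysis over the empirical degree distribution $F_n$ of \eqref{def-Fn-CM}. Writing $\expec_{\bfdit}$ for the average over the degrees and $\expec_{{\rm CM}}[\,\cdot\mid\bfdit]$ for the average over the uniform pairing given $\bfdit$, we have $\expec[Z_n(\beta,B)]=\expec_{\bfdit}\big[\expec_{{\rm CM}}[Z_n(\beta,B)\mid\bfdit]\big]$. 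By Theorem~\ref{thm-pressure-CM-ann-deter} and the continuity of the functional $\bsq\mapsto\varphiand(\beta,B;\bsq)$ of \eqref{varphiand-def} in a topology that also controls the first moment of the degree (which is precisely what Condition~\ref{cond-degrees-regcond}(a)--(b) provide), one has $\tfrac1n\log\expec_{{\rm CM}}[Z_n(\beta,B)\mid\bfdit]=\varphiand(\beta,B;F_n)+o(1)$ along any sequence of degree sequences obeying Condition~\ref{cond-degrees-regcond}. Thus, on the exponential scale $\expec[Z_n(\beta,B)]$ behaves like $\expec_{\bfdit}[\e^{n\varphiand(\beta,B;F_n)}]$, and Sanov's theorem together with Varadhan's lemma should give
\[
\varphianiid(\beta,B)=\sup_{\bsq}\Big[\varphiand(\beta,B;\bsq)-H(\bsq\mid\bsp)\Big],
\]
the supremum running over probability measures $\bsq=(q_k)_{k\ge1}$ on $\{1,2,\dots\}$, where $H(\bsq\mid\bsp)=\sum_k q_k\log(q_k/p_k)$ is the relative entropy; this is the quantity \eqref{iid}. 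Throughout we may assume $B\ge0$, using the spin-flip symmetry $Z_n(\beta,B)=Z_n(\beta,-B)$.

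For the \emph{lower bound}, fix $\bsq$ with $H(\bsq\mid\bsp)<\infty$ and $\expec_{\bsq}[\e^{\beta D/2}]<\infty$, restrict $\expec[Z_n]$ to the event that $F_n$ lies in a small neighbourhood of $\bsq$ (of probability $\e^{-n(H(\bsq\mid\bsp)+o(1))}$ by Sanov), and on that event apply Theorem~\ref{thm-pressure-CM-ann-deter} to get $\expec_{{\rm CM}}[Z_n\mid\bfdit]=\e^{n(\varphiand(\beta,B;\bsq)+o(1))}$; this yields $\tfrac1n\log\expec[Z_n]\ge\varphiand(\beta,B;\bsq)-H(\bsq\mid\bsp)+o(1)$, and optimising over $\bsq$ gives the lower bound. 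For the \emph{upper bound}, truncate the degrees at a level $K$: vertices of degree $>K$ affect $\expec[Z_n]$ only through the edges incident to them, controlled by the crude estimate $Z_n\le 2^n\e^{|B|n}\prod_i\e^{\beta d_i/2}$, so their net contribution is governed by $\expec[\e^{\beta D/2}\indic{D>K}]$ and is negligible as $K\to\infty$; this is exactly where the hypothesis $\expec[\e^{\beta D/2}]<\infty$ of Proposition~\ref{prop-inf-press-iid} is used. On the truncated part the number of empirical types on $\{1,\dots,K\}$ is polynomial in $n$, so a union bound over types, each weighted by its multinomial probability and by the deterministic-degree estimate, gives $\tfrac1n\log\expec[Z_n]\le\sup_{\bsq}[\varphiand(\beta,B;\bsq)-H(\bsq\mid\bsp)]+o_K(1)+o(1)$; letting $n\to\infty$ and then $K\to\infty$ closes the estimate and proves \eqref{pressure-CM-ann-iid}.

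It remains to prove the strict inequality $\varphianiid(\beta,B)>\varphiand(\beta,B;\bsp)$, which we do for $\beta>0$ (for $\beta=0$ the Ising model is trivial and the two pressures coincide). Taking $\bsq=\bsp$ in the variational formula gives ``$\ge$'', since $H(\bsp\mid\bsp)=0$, so it suffices to exhibit $\bsq$ near $\bsp$ with $\varphiand(\beta,B;\bsq)-H(\bsq\mid\bsp)>\varphiand(\beta,B;\bsp)$. Two facts do this. First, $H(\bsq\mid\bsp)$ is a smooth convex functional that is minimised, with value $0$, at $\bsq=\bsp$, so along any regular one-parameter family $\{\bsq_t\}_{t\ge0}$ with $\bsq_0=\bsp$ one has $H(\bsq_t\mid\bsp)=O(t^2)$. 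Second, $\varphiand(\beta,B;\cdot)$ has a direction of \emph{strict} first-order increase at $\bsp$: take $\bsq_t$ stochastically increasing in $t$, e.g.\ the exponential tilt $q_{t,k}\propto p_k\e^{tk}$, which is well defined for small $t>0$ since $\e^{tD}\le\e^{\beta D/2}$ for $0<t\le\beta/2$ and $\expec[\e^{\beta D/2}]<\infty$. Then $\bsq_t$ puts into the configuration model a positive density $\Theta(t)$ of extra half-edges, hence of extra edges, and by the second Griffiths inequality (using the spin-flip symmetry to cover $B<0$) adding edges increases the pressure, so $\varphiand(\beta,B;\bsq_t)\ge\varphiand(\beta,B;\bsp)+c\,t+o(t)$ with $c=c(\beta,B)>0$ (the simplest quantitative form being that each added edge multiplies $Z_n$ by at least $\cosh\beta>1$). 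Taking $t>0$ small, the linear gain in $\varphiand$ beats the quadratic cost $H(\bsq_t\mid\bsp)$, which gives the strict inequality.

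The main obstacle is not the heuristic skeleton --- this is the standard ``Sanov $\times$ Varadhan'' picture --- but the analytic control right at the boundary $\expec[\e^{\beta D/2}]<\infty$, where there is no room to spare in moments: the truncation error in the upper bound, the uniformity of the $o(1)$ in Theorem~\ref{thm-pressure-CM-ann-deter} over atypical degree sequences (in particular ones with $\dmax=\Theta(\log n)$), and the finiteness and attainment of the supremum defining $\varphianiid$ all have to be handled with care. For the strict inequality, the point to nail down is that the right derivative of $t\mapsto\varphiand(\beta,B;\bsq_t)$ at $t=0$ is genuinely positive and not accidentally zero; the cleanest route is to differentiate the explicit formula \eqref{varphiand-def}, with the Griffiths monotonicity serving as the conceptual reason why it must be.
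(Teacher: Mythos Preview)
Your overall strategy---conditioning on the empirical degree distribution and running Sanov plus Varadhan---is exactly what the paper does, and your lower bound (restricting to configurations near a target $\bsq$, then invoking the deterministic result) is essentially the paper's argument of conditioning on $\{D_i<k\ \forall i\}$ and applying the finite-support case.

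The upper bound, however, has a genuine gap that you flag but do not close. You write that ``vertices of degree $>K$ affect $\E[Z_n]$ only through the edges incident to them, controlled by the crude estimate $Z_n\le 2^n\e^{|B|n}\prod_i\e^{\beta d_i/2}$''; but this crude bound is for the \emph{whole} partition function, and there is no obvious way to peel off a ``high-degree contribution'' and bound the remainder by a truncated-degree partition function. What is needed is an inequality of the shape
\[
\E[Z_n(\beta,B)]\ \le\ \big(\text{functional of the truncated degrees }D_i\wedge K\big)\times \e^{\beta(\ell_n-\ell_n^{(K)})/2},
\]
and this does not follow from a union bound over types. The paper gets it from a monotonicity property specific to $F_\beta$ (Lemma~\ref{labcd}): using the representation \eqref{zncb}, one shows $\ell_n F_\beta(\ell_j/\ell_n)\le \ell_n^{(K)}F_\beta(\ell_j^{(K)}/\ell_n^{(K)})$ pointwise, which is exactly the decoupling you need. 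Only after this can one condition on $N_{\ge K}$, factorize, and control the tail via $\E[\e^{\beta D/2}\indic{D\ge K}]$ (Lemmas~\ref{lem-cond-pres} and the subsequent one). This is the missing idea in your sketch; the uniformity-of-$o(1)$ issue you mention is then sidestepped, because the upper bound is reduced to a genuinely finite-support problem where the error in Theorem~\ref{thm-pressure-CM-ann-deter} is automatically uniform.

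On the strict inequality: your entropy-versus-tilt argument is the right shape (quadratic cost, linear gain), but the Griffiths step is not rigorous---changing the degree distribution in the configuration model is not the same as adding edges to a fixed graph, so neither Griffiths inequality applies directly, and there is no evident monotone coupling. As you yourself say, the clean route is to differentiate \eqref{varphiand-def} in $\bsq$; the computation in Step~3 of Section~\ref{sec-crit-value-iid} (culminating in \eqref{qi-beta-def}) shows that $\partial\varphiand/\partial q_i$ at $\bsq=\bsp$ equals $\log 2+\tfrac{i}{2}\log\cosh\beta$ plus terms independent of $i$, hence depends nontrivially on $i$ for $\beta>0$, so $\bsp$ is not a constrained stationary point of $\bsq\mapsto\varphiand(\beta,B;\bsq)-H(\bsq\mid\bsp)$. (The paper does not spell out this last step either.)
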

\medskip

Theorem \ref{thm-pressure-CM-ann-iid} implies that it is favorable to increase the edge-density in the \mo{configuration model} with i.i.d.\ degrees, and that the above effect is so pronounced that it changes the pressure per particle. We continue by studying the critical value $\betacaniid$, and show that this effect is so large that it actually changes the critical value:
%

\begin{theorem}[Critical value of annealed Ising model: \col{i.i.d.} degrees]
\label{thm-crit-CM-ann-iid}
Consider 
the configuration model where the degrees are i.i.d.\ copies of the random variable $D$ with distribution function $F$. The critical inverse temperature $\betacaniid$ satisfies $\betacaniid\leq \bar{\beta}_c^{\sss \rm D}$, where $\bar{\beta}_c^{\sss \rm D}$ is the unique solution to
	\eqn{
	\label{bcand}	
	\bar{\beta}_c^{\sss \rm D}=\atanh(1/\nu(\boldsymbol{q}(\bar{\beta}_c^{\sss \rm D}))),
	}
where 	
	\eqn{
	\label{nu-q-beta-def}
	\nu(\boldsymbol{q})=\frac{\sum_{k\geq 1}k(k-1)q_k}{\sum_{k\geq 1} kq_k}
	\qquad
	\text{and}
	\qquad q_k(\beta)=\frac{p_k \cosh(\beta)^{k/2}}{\expec[\cosh(\beta)^{D/2}]},\quad \cla{\text{for }k\in \N}
	}
and $p_k=\prob{(D=k)}$.
Assume that $\expec[\e^{\bar{\beta}_c^{\sss \rm D} D/2}]<\infty$. Then, the critical value $\betacaniid$ satisfies
	\eqn{
	\label{betac-an-iid-bd}
	\betacaniid<\betacqud= \atanh(1/\nu),
	}
unless $D=r$ almost surely for some \mo{positive integer} $r$.
\end{theorem}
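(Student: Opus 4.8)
The proof combines a deterministic study of the fixed-point equation \eqref{bcand} with a variational reduction of the i.i.d.\ model to the deterministic-degree model of Theorems~\ref{thm-pressure-CM-ann-deter}--\ref{thm-crit-CM-ann-deter}. I work in the non-degenerate regime $\nu=\nu(\boldsymbol p)\in(1,\infty)$, so that $\betacqud=\atanh(1/\nu)\in(0,\infty)$; outside it the assertion is vacuous or trivial. The idea is that at $B=0$ the i.i.d.\ annealed model behaves like a deterministic-degree model with an \emph{effective} degree distribution which is at least as ``spread out'' as the $\cosh$-tilt $\boldsymbol q(\beta)$ of \eqref{nu-q-beta-def}; once that model is super-critical, so is the i.i.d.\ one.

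\textbf{Step 1 (the fixed-point equation).} Put $g(\beta):=\atanh\big(1/\nu(\boldsymbol q(\beta))\big)$, so that $g(0)=\betacqud$ since $\boldsymbol q(0)=\boldsymbol p$. For $\beta>0$ the law $\boldsymbol q(\beta)$ is the $\boldsymbol p$-tilt with likelihood ratio $k\mapsto\cosh(\beta)^{k/2}$, which is increasing in $k$ and whose logarithmic $\beta$-derivative $\tfrac k2\tanh\beta$ is again strictly increasing in $k$; hence the size-biased law $kq_k(\beta)/\sum_j jq_j(\beta)$ is strictly stochastically increasing in $\beta$ (unless $D$ is a.s.\ constant), and $\nu(\boldsymbol q(\beta))=\expec\big[(\text{size-biased }D)-1\big]$ is strictly increasing. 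Thus $g$ is continuous and strictly decreasing on $\{\beta\ge0:\nu(\boldsymbol q(\beta))\in(1,\infty)\}$, with $g(0)=\betacqud$ and $g\downarrow0$ as $\nu(\boldsymbol q(\beta))\uparrow\infty$; intersecting with the strictly increasing diagonal gives a unique $\bar\beta_c^{\sss \rm D}$ solving \eqref{bcand}, with $0<\bar\beta_c^{\sss \rm D}<\betacqud$. When $D=r$ a.s.\ ($r\ge2$), $\boldsymbol q(\beta)\equiv\boldsymbol p$, $g\equiv\betacqud$, and $\bar\beta_c^{\sss \rm D}=\betacqud$. The hypothesis $\expec[\e^{\bar\beta_c^{\sss \rm D}D/2}]<\infty$ places $\bar\beta_c^{\sss \rm D}$ inside the region where, by Proposition~\ref{prop-inf-press-iid}, the annealed pressure is finite and every $\boldsymbol q$ with finite relative entropy to $\boldsymbol p$ has finite mean, so $\betacaniid$ and $\Msaniid$ are well-defined near $\bar\beta_c^{\sss \rm D}$.

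\textbf{Step 2 (variational formula; super-criticality of the $B=0$ maximiser).} By Theorem~\ref{thm-pressure-CM-ann-iid}---conditioning on the i.i.d.\ degree sequence, inserting the deterministic-degree asymptotics of Theorem~\ref{thm-pressure-CM-ann-deter}, and applying Sanov's theorem with Varadhan's lemma (the bound $\expec[\e^{\beta D/2}]<\infty$ providing exponential tightness)---the annealed pressure has the representation
\eqn{
\varphianiid(\beta,B)=\max_{\boldsymbol q}\big[\varphiand(\beta,B;\boldsymbol q)-H(\boldsymbol q\,|\,\boldsymbol p)\big],
}
$H$ the relative entropy, the maximum attained by upper semicontinuity and the coercivity of $-H(\cdot\,|\,\boldsymbol p)$. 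Set $\rho(\beta;\boldsymbol q):=\varphiand(\beta,0;\boldsymbol q)-\log2-\tfrac12\expec_{\boldsymbol q}[D]\log\cosh\beta$. The high-temperature expansion $Z_n=2^n(\cosh\beta)^{|E_n|}\sum_{\text{even }F}(\tanh\beta)^{|F|}$ with $\sum_{\text{even }F}(\tanh\beta)^{|F|}\ge1$ gives $\rho\ge0$, and the mean-field characterisation of $\varphiand(\beta,0;\cdot)$ underlying Theorem~\ref{thm-crit-CM-ann-deter} yields the dichotomy $\rho(\beta;\boldsymbol q)>0\iff\nu(\boldsymbol q)\tanh\beta>1$. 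With the elementary identity $\tfrac12\expec_{\boldsymbol q}[D]\log\cosh\beta-H(\boldsymbol q\,|\,\boldsymbol p)=\log\expec[\cosh(\beta)^{D/2}]-H(\boldsymbol q\,|\,\boldsymbol q(\beta))$ the $B=0$ problem becomes $\varphianiid(\beta,0)=\log2+\log\expec[\cosh(\beta)^{D/2}]+\max_{\boldsymbol q}\big[\rho(\beta;\boldsymbol q)-H(\boldsymbol q\,|\,\boldsymbol q(\beta))\big]$. Fix $\beta>\bar\beta_c^{\sss \rm D}$. By Step~1, $g(\beta)<\beta$, i.e.\ $\nu(\boldsymbol q(\beta))\tanh\beta>1$, so $\rho(\beta;\boldsymbol q(\beta))>0$; testing the last maximum at $\boldsymbol q=\boldsymbol q(\beta)$ shows its value is $\ge\rho(\beta;\boldsymbol q(\beta))>0$. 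Hence any maximiser $\boldsymbol q^\star=\boldsymbol q^\star(\beta)$ satisfies $\rho(\beta;\boldsymbol q^\star)-H(\boldsymbol q^\star\,|\,\boldsymbol q(\beta))\ge\rho(\beta;\boldsymbol q(\beta))>0$, so (as $H\ge0$) $\rho(\beta;\boldsymbol q^\star)>0$, that is $\nu(\boldsymbol q^\star)\tanh\beta>1$.

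\textbf{Step 3 (conclusion) and the main obstacle.} For this $\beta$ and $\boldsymbol q^\star$ the representation gives $\varphianiid(\beta,B)\ge\varphiand(\beta,B;\boldsymbol q^\star)-H(\boldsymbol q^\star\,|\,\boldsymbol p)$ for all $B$, with equality at $B=0$; both sides are convex in $B$, so comparing right-derivatives at $B=0$ yields $\Msaniid(\beta)\ge\Msand(\beta;\boldsymbol q^\star)$. Since $\nu(\boldsymbol q^\star)\tanh\beta>1$, i.e.\ $\beta>\atanh(1/\nu(\boldsymbol q^\star))$, Theorem~\ref{thm-crit-CM-ann-deter} together with monotonicity of the spontaneous magnetisation in $\beta$ gives $\Msand(\beta;\boldsymbol q^\star)>0$, hence $\Msaniid(\beta)>0$. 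As $\beta>\bar\beta_c^{\sss \rm D}$ was arbitrary, $\betacaniid\le\bar\beta_c^{\sss \rm D}$; combined with Step~1 this gives $\betacaniid\le\bar\beta_c^{\sss \rm D}<\betacqud$ unless $D$ is a.s.\ constant, which is \eqref{betac-an-iid-bd}, while for $D=r$ a.s.\ the model is asymptotically $r$-regular and $\betacaniid=\betacqud$ by \cite{Can17a,Can17b}. The delicate part is Step~2: proving the representation \emph{with the maximum attained} (a mere near-maximiser is useless in Step~3, since the excess of $\varphianiid(\beta,0)$ over the test value cannot be divided out as $B\downarrow0$) and, above all, establishing the dichotomy $\rho(\beta;\boldsymbol q)>0\iff\nu(\boldsymbol q)\tanh\beta>1$, which is exactly what couples the deterministic critical value $\atanh(1/\nu(\boldsymbol q))$ to the sign of the $B=0$ pressure gain; both rest on the fine structure of $\varphiand(\beta,0;\boldsymbol q)$ established in the deterministic part via the reduction to the random matching problem of \cite{Can17a}. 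Everything else---the fixed-point analysis, the stochastic monotonicity of $\nu(\boldsymbol q(\beta))$, and the convexity/envelope step---is routine. Finally, note that this route yields only $\betacaniid\le\bar\beta_c^{\sss \rm D}$: equality would further require the $B=0$ maximiser to coincide with $\boldsymbol q(\beta)$ (hence be sub-critical) for $\beta<\bar\beta_c^{\sss \rm D}$, plus a Danskin-type differentiability of the maximum, which is more delicate and is neither needed nor claimed here.
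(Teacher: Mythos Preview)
Your proof is correct and takes a genuinely different route from the paper's. Both arguments hinge on the comparison inequality $\Msaniid(\beta)\ge\Msand(\beta;\boldsymbol q)$ for a suitable test distribution $\boldsymbol q$ (your Step~3, the paper's Step~4), but they produce this test distribution in opposite ways.

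The paper works from the subcritical side: fixing $\beta<\betacaniid$, it argues by contradiction that some sequence of $B>0$ maximisers $(w^\star(B),\boldsymbol q^\star(B))$ has $w^\star(B)\to1$, then uses the Lagrange stationarity equations for $\boldsymbol q^\star(B)$ to \emph{identify} the limit as the $\cosh$-tilt $\boldsymbol q(\beta)$. This yields $\varphianiid(\beta,0)\le -H(\boldsymbol q(\beta)\,|\,\boldsymbol p)+\varphiand(\beta,0;\boldsymbol q(\beta))$, which together with the trivial lower bound gives the equality needed for the envelope comparison and forces $\Msand(\beta;\boldsymbol q(\beta))=0$, i.e.\ $\beta\le\atanh(1/\nu(\boldsymbol q(\beta)))$. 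Your route works from the supercritical side: for $\beta>\bar\beta_c^{\sss\rm D}$ you do not identify the $B=0$ maximiser at all, but use the re-centring identity $\varphiand(\beta,0;\boldsymbol q)-H(\boldsymbol q\,|\,\boldsymbol p)=\log2+\log\expec[\cosh(\beta)^{D/2}]+\rho(\beta;\boldsymbol q)-H(\boldsymbol q\,|\,\boldsymbol q(\beta))$ together with the dichotomy $\rho>0\Leftrightarrow\nu(\boldsymbol q)\tanh\beta>1$ to show that \emph{any} $B=0$ maximiser $\boldsymbol q^\star$ is already supercritical for the deterministic model; then the envelope step gives $\Msaniid(\beta)\ge\Msand(\beta;\boldsymbol q^\star)>0$.

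Your approach is conceptually cleaner---no limit extraction along $B\searrow0$, no Lagrange analysis---but it shifts the burden to (i) attainment of the $B=0$ maximum (you correctly note that a near-maximiser is useless for the envelope step), and (ii) the $\rho$-dichotomy. Both follow from the paper's machinery: (ii) is a repackaging of Lemma~\ref{lem-crit-v} at $B=0$ (the second-derivative computation $H(1,0)>0$ in the supercritical case shows $(\tfrac12)_k$ is a strict local minimum along the mean-field curve, hence $\rho>0$; the subcritical direction is Lemma~\ref{lem-crit-v}(a)); (i) uses the compactness established after \eqref{varphiand-def} combined with the tail control of Section~\ref{sec-part-func-iid-inf}. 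The paper's route sidesteps (i) entirely because the Lagrange analysis pins down $\boldsymbol q(\beta)$ directly as the limit of actual maximisers. On the other hand, your re-centring identity makes transparent \emph{why} the $\cosh$-tilt is the natural object: it is precisely the base measure that absorbs the trivial part $\log2+\tfrac12\expec_{\boldsymbol q}[D]\log\cosh\beta$ of the pressure, leaving only the ferromagnetic gain $\rho$ to compete against a relative entropy.
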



The case where $D=r$ corresponds to random regular graphs, and is the only case where Theorems \ref{thm-pressure-CM-ann-deter} and \ref{thm-pressure-CM-ann-iid} actually agree, which explains why this case needs to be excluded in Theorem \col{\ref{thm-crit-CM-ann-iid}}. The random regular graph case was also investigated by Dommers et al.\ in \cite{GiaGibHofPri16} for $r=2$, by Can in \cite{Can17a, Can17b}, \col{and by Dembo, Montanari, Sly and Sun \cite{DemMonSlySun14} (see also the remark below \cite[Theorem 1]{DemMonSlySun14}, where it is mentioned that the Ising result also holds for odd degree).}

The fact that $\betacaniid<\betacqud$ can be understood by noting that, by a correlation inequality,
	\eqn{
	\nu(\boldsymbol{q}(\beta))\geq \nu.
	}
\col{Indeed, introducing the random variable  $D^\star$ with law given by 
	\eqn{\label{Dstar}
	\pp(D^\star=k) = \frac{k p_k}{\expec[D]},
	}	
where $p_k = \pp(D=k)$, one has}	
	\eqn{
	\nu(\boldsymbol{q}(\beta))=\frac{\expec[(D^\star-1)\cosh(\beta)^{D^\star/2}]}{\expec[\cosh(\beta)^{D^\star/2}]}\geq \expec[D^\star-1]=\nu,
	}
with strict inequality unless $D^\star$ is constant or $\beta=0$. Thus, \mo{by the first part of Theorem \ref{thm-crit-CM-ann-iid}}, 
$$
\mo{\betacaniid \le \bar{\beta}_c^{\sss \rm D} =\atanh(1/\nu(\boldsymbol{q}(\bar{\beta}_c^{\sss \rm D})))<  \atanh(1/\nu) = \betacqud.}
$$ 

\begin{remark}[Equality of $\betacaniid$ and $\bar{\beta}_c^{\sss \rm D}$]
\label{rem-eq-crit-betas-iid}
{\rm We believe that $\betacaniid$ and $\bar{\beta}_c^{\sss \rm D}$ are equal, see also Remark \ref{rem-eq-crit-betas-iid-rep} below, where we explain this intuition more precisely.
Unfortunately, due to insufficient analytical control of the variational formulas that define $\varphianiid(\beta, B)$, we are unable to prove this.}
\end{remark}

\begin{remark}[Specific degree distributions]
{\rm In Section \ref{sec-crit-value-iid}, we work out two special cases. For the CM with Poisson degrees with parameter $\lambda$, we have $\beta_c^{\sss\mathrm{qu}}=\atanh(1/\lambda)$ for $\lambda>1$. When $\lambda\leq 1$, there is no giant component \cla{\cite{Hofs18}}, so that $\beta_c^{\sss\mathrm{qu}}=\infty$. In the annealed setting, however, we instead obtain that
	\eqn{
	\label{pippo}
	\bar{\beta}_c^{\sss \rm D}= -\log (2 \lambda^2) + \log   \left[1+\sqrt{1+4 \lambda^4}+\sqrt{2 +2 \sqrt{1+4 \lambda^4}}\right]<\infty,
	}
for all $\lambda>0$, so that the annealed setting with \mo{Poisson} i.i.d.\ degrees behaves markedly different from the quenched setting. In particular, even when $\lambda<1$, the annealed Ising model still has a {\em finite} critical value, indicating that the graph, under the annealed Ising model, will have a giant component for {\em all} $\lambda>0$. 
%

\cla{A similar scenario appears for the CM with Geometric degrees with parameter $p$. In this case  $\beta_c^{\sss\mathrm{qu}}= \atanh(p/[2(1-p)])$, since the expected forward degree is $\nu=2(1-p)/p$. Thus, when $p\ge \frac 2 3$ there is no giant component and $\beta_c^{\sss\mathrm{qu}}=\infty$. On the other hand, in the annealed case with i.i.d.\ degrees we prove that
	\eqn{
	\bar{\beta}_c^{\sss \rm D}= \ln \left ( {x^\star(p)}^2 + \sqrt{{x^\star(p)}^4-1}\right) < \infty,
	}
where $x^\star(p)$ is the solution of a fourth order algebraic equation, see \eqref{eqn-geom-eqn}, that is larger than 1 for {\em all} $0<p<1$. Also in this case we have a {\em finite} annealed critical value for all values of the parameter $p$. 
}
}
\end{remark}

\subsection{Discussion}
\label{sec-disc}
In this section, we provide some discussion of our results and state open problems.


\medskip
\paragraph{\bf Quenched and annealed.} \mo{For the random $r$-regular graph, not only do the critical values in the annealed and quenched settings agree, but even their {pressures \cite{Can17a}}. 
We believe however this to be exceptional.  Generically, i.e. for all configuration models with non constant deterministic degrees, we proved in Theorem \ref{thm-crit-CM-ann-deter} that the critical 
values are the same, but we believe that the quenched pressure in \cite{DemMon10a, DomGiaHof10} and the annealed {pressures} in Theorem \ref{thm-pressure-CM-ann-deter} are different, 
as the first involves the solution of an infinite dimensional variational problem for the so-called cavity field
distribution, whereas the second involves the solution of a scalar variational problem (see Lemma \ref{lem-sk-sol}).  However, it is not clear to us how to prove such a result.
}

\medskip
\paragraph{\bf Deterministic degrees: several universality classes.} For deterministic degrees, we expect that \mo{in the annealed setting} the behavior \mo{around criticality} is described by the degree distribution in a similar way as for the quenched case, as described in \cite{DomGiaHof12}. In particular, such a result would imply that in the case where the degrees have a bounded fourth moment, the critical exponents are equal to those for the Curie-Weiss model, while in the strongly inhomogeneous setting of power-law degrees with exponent $\tau\in(3,5)$, they depend on $\tau$. 
\mo{We do not address this question in this paper.}

\medskip
\paragraph{\bf Independent and identically distributed degrees: only one universality class.} The situation is quite different in the case of i.i.d.\ degrees. 
{\rm We believe that $\betacaniid=\bar{\beta}_c^{\sss \rm D}$ (recall Remark \ref{rem-eq-crit-betas-iid}). When $\expec[\e^{\betacaniid D/2}]<\infty$, then $\boldsymbol{q}(\betacaniid)$ in \eqref{nu-q-beta-def} has exponential tails, since $\cosh(\betacaniid)<\e^{\betacaniid}$. Therefore, denoting $\boldsymbol{q}(\beta,B)$ to be the empirical degree distribution of the annealed Ising model on the configuration model with i.i.d.\ degrees, one can expect that for $\beta>\betacaniid$ and $B>0$ with $\beta-\betacaniid$ and $B$ very small, also $\boldsymbol{q}(\beta,B)$ has exponential tails. As a result, power-law degree distributions cannot occur, which suggests that the critical exponents are all equal to those of the Curie-Weiss model. In this case, there exists only {\em one} universality class, compared to the several ones for the setting of deterministic degrees. It would be quite surprising should the universality classes for the two settings be so crucially different. See Remark \ref{rem-crit-iid} for more details.
\medskip

\paragraph{\col{\bf Generalized random graph with random weights.}} 
\col{
The analysis developed for the configuration model with i.i.d.\ degrees can be extended to 
the generalized random graph model with i.i.d.\ weights. For generalized random graphs with deterministic \mo{weights}, it is  known that the value of
the critical temperature where the annealed phase transition occurs is  strictly larger than the value of the 
quenched critical temperature \cite{GiaGibHofPri16}. The additional randomness of the weights causes a further increase of the
annealed critical temperature. As for the critical behavior, by the same argument of the previous item we expect the existence
of a unique universality class for the generalized random graph with i.i.d. weights, contrary to the quenched case or the annealed case with deterministic weights
that have been shown to have multiple universality classes \cite{DomGiaHof12, DomGiaGibHofPri16}.
Thus adding randomness to the random graph seems to favor a higher degree of universality, in the sense
that different universality classes merge together as a consequence of averaging.}
\medskip

\paragraph{\bf Organization of the paper.} We prove the results for deterministic degrees in Section \ref{sec-proof-deter}. More precisely, we identify the partition function for deterministic degrees and prove Theorem \ref{thm-pressure-CM-ann-deter} in Section \ref{sec-part-func-deter}, and identify the annealed critical value in Theorem \ref{thm-crit-CM-ann-deter} in Section \ref{sec-crit-value-deter}. We prove the results for i.i.d.\ degrees in Section \ref{sec-proof-iid}. More precisely, we show when the partition function is finite and prove Proposition \ref{prop-inf-press-iid} in Section \ref{sec-proof-prop-inf-press-iid}, identify the partition function for i.i.d.\ degrees with finite support and prove Theorem \ref{thm-pressure-CM-ann-deter} in Section \ref{sec-part-func-iid-fin}, extend the analysis to infinite support i.i.d.\ degrees in Section \ref{sec-part-func-iid-inf}, and identify the annealed critical value in Theorem \ref{thm-crit-CM-ann-iid} in Section \ref{sec-crit-value-iid}.

\section{Deterministic degrees: Proof of Theorems \ref{thm-pressure-CM-ann-deter} and \ref{thm-crit-CM-ann-deter}}
\label{sec-proof-deter}
In this section, we prove our results in Theorems \ref{thm-pressure-CM-ann-deter} and \ref{thm-crit-CM-ann-deter} 
on the annealed Ising model on the configuration model with deterministic degrees satisfying Conditions \ref{cond-degrees-regcond}(a)-(b) and (d).
In Section \ref{sec-part-func-deter}, we derive the thermodynamic limit of the pressure per particle. In Section \ref{sec-crit-value-deter}, we identify the critical value.

\subsection{Partition function for deterministic degrees: Proof of Theorem \ref{thm-pressure-CM-ann-deter}}
\label{sec-part-func-deter}
Consider the annealed Ising model on the configuration model $\CMnd$. Assume that Conditions \ref{cond-degrees-regcond}(a)-(b) and (d) hold. For $k\geq 1$, denote
	\eqn{
	n_k= \#\{i \leq n\colon d_i  =k\}, \hspace{2 cm} \textrm{and} \hspace{2 cm} \pkn= \prob(D_n=k)=\frac{n_k}{n}.
	}
By Condition \ref{cond-degrees-regcond}(a), $\pkn\rightarrow p_k=\prob(D=k)$. By symmetry, we only consider the case that $B \geq 0.$

We now rewrite the partition function, using ideas from Can  \cite{Can17a}. By \cite[(3.4)]{Can17a}, the expectation of the partition function can be written as 
	\begin{equation} \label{ezn}
	\E[Z_n(\beta,B)]= \e^{(\beta \expec[D_n]/2 - B) n} \sum \limits_{A \col{\subseteq} [n]} \e^{2B |A|} g_{\beta}(\ell_A, \ell_n),
	\end{equation}
where $|A|$ denotes the number of vertices in $A$, 
	\eqn{
	\ell_A = \sum\limits_{i \in A} d_i
	}
is the total degree of the vertices in $A$ and
	\eqn{
	g_{\beta}(k,m)=\expec[\e^{-2\beta X(k,m)}].
	} 
Here, for $k\in[m]$, $X(k,m)$ denotes the number of edges between $[k]$ and $[m]\setminus [k]$ in a configuration model with $m$ vertices of degree 1 only (i.e., a random matching).

The proofs of Can  \cite{Can17a, Can17b} are centered around a careful asymptotic analysis of the function $g_\beta(k,m)$ that we next explain in detail (see in particular \cite[Lemma 2.1]{Can17a}). Can applies these ideas to the random regular graph, but also notices that the results hold more generally (see \cite[Section 7]{Can17a}). In this paper, we extend the analysis so as to deal with the non-regular case, for which Can proves the existence of the annealed pressure in \cite[Proposition 7.3]{Can17a}, but does not analyze it further. Indeed, the sequence $g_{\beta}(k,m)$ satisfies
 	\eqn{
	\label{gbeta-asy}
	\max_{0 \leq k \leq m} \,\,\vline\frac{\log g_\beta(k,m)}{m}- F_{\beta}(k/m)  \, \vline \, = \frac{\kO(1)}{m},
  	}
where, denoting $x\wedge y=\min\{x,y\}$ for $x,y\in \R$,
  	\eqn{
	\label{Fbeta-def}
	F_{\beta}(t)= \int_0^{t \wedge (1-t)} \log f_{\beta}(u)du,
	}
and
	\eqn{
	\label{fbeta-def}
	f_{\beta}(u) = \frac{\e^{-2\beta}(1-2u)+ \sqrt{1+(\e^{-4\beta}-1)(1-2u)^2}}{2(1-u)}.
	}

Using these asymptotics, \col{a variational formula for the pressure per particle can be obtained in the thermodynamic limit. We start by rewriting} \eqref{ezn} as 
	\begin{equation} \label{nezn}
	\E[Z_n(\beta,B)]= \e^{(\beta \expec[D_n]/2 - B) n} \sum \limits_{(j_k) \leq (n_k)} \prod\limits_{k \geq 1} \binom{n_k}{j_k} \e^{2B \sum_{k\geq 1} j_k} g_{\beta} \left(\sum_{k\geq 1} kj_k, \ell_n\right).
	\end{equation}
In the above formula, compared to \eqref{ezn}, $j_k$ is the number of vertices of degree $k$ that are in $A$ (so that indeed $j_k\leq n_k$), and $\ell_A=\sum_{k\geq 1} kj_k$. The factor $\binom{n_k}{j_k}$ arises from the number of possible choices of $j_k$ elements out of a total of $n_k$ elements.

Then 
\col{		\eqan{
	\label{limiting-VP}
	\psinand(\beta,B)=&\frac{\beta \expec[D_n]}{2} - B \nn\\
	&+ \frac{1}{n} \log \sum\limits_{(j_k) \leq (n_k)} \exp\left[\sum_{k\geq 1} \log \binom{n_k}{j_k} + 2 B \sum_{k \geq 1} j_k 
	+ \log g_{\beta} \left(\sum_{k \geq 1} kj_k, \ell_n\right) \right].\nn
	}
}
We can bound the above sum from below by taking the maximum over all $(j_k) \leq (n_k)$, and from above by the total number of $(j_k)$ with $(j_k) \leq (n_k)$ times the maximum.
Note that this number is bounded from above by
	\eqn{
	\prod_{k\geq 1} \Big(1+(n_k-1)\indic{n_k\geq 1}\Big).
	}
Since $\dmax=o(n/\log{n})$ by Condition \ref{cond-degrees-regcond}(d), and since $n_k\leq n$, we can trivially upper bound
	\eqn{
	\prod_{k\geq 1} \Big(1+(n_k-1)\indic{n_k\geq 1}\Big)\leq n^{\dmax}=\e^{\dmax \log{n}}=\e^{o(n)}.
	}
Further, let us denote the proportion of vertices of degree $k$ in our set as
	\eqn{
	s_k = j_k/n_k \in [0,1].
	}
Then, the Stirling approximation $n!=\sqrt{2\pi n} \e^{-n} n^n (1+O(1/n))$ gives the estimate
	\eqn{
	\binom{n}{j}= \frac{n^n}{j^j (n-j)^{n-j}} \e^{O(\log{n})}=\e^{n I(j/n)+O(\log{n})},
	}
where $I(0)=I(1)=0$ and, for $t \in (0,1)$,
	\eqn{
	\label{I-def}
	I(t)=-t \log t - (1-t) \log (1-t).
	}
Using that $\sum_{k\geq 1} \log{n_k}\indic{n_k\geq 1}=o(n)$ when $\dmax=o(n/\log{n})$, Condition \ref{cond-degrees-regcond}(d) implies
	\eqan{
	\label{psinand-1}
	\psinand(\beta,B)&=\frac{\beta \expec[D_n]}{2}
	+\max\limits_{0\le s_k \leq 1}\Big[  \sum_{k\geq 1} \pkn I(s_k) + B \Big(2\sum_{k \geq 1} s_k\pkn -1\Big)\nn\\
	&\qquad\qquad\qquad\qquad\qquad + \expec[D_n] F_{\beta}\Big(\frac{\sum_{k\geq 1}k\pkn s_k}{ \expec[D_n] }\Big) \Big] +O\Big(\frac{\dmax \log{n}}{n}\Big),
	}
where the maximum runs over $s_k=j_k/n_k$ and we have used \eqref{gbeta-asy}. We next argue that we can replace the maximum over $s_k=j_k/n_k$ by the supremum over $s_k\in(0,1)$. 
The upper bound holds trivially; for the lower bound, we will make use of the explicit solution that will be derived below, which satisfies that $s_k^\star \in (\vep,1-\vep)$ for all $k\geq 1$. See Lemma \ref{lem-sk-sol} below. Since the function $\sum_{k\geq 1} \pkn I(s_k) + B \Big(2\sum_{k \geq 1} s_k\pkn -1\Big)+ \expec[D_n] F_{\beta}\Big(\frac{\sum_{k\geq 1}k\pkn s_k}{ \expec[D_n] }\Big)$ is uniformly continuous for $s_k\in (\vep,1-\vep)$, this shows that the maximum will be close to the supremum over $s_k\in(0,1)$.

Using that $\pkn\rightarrow p_k$ by Condition \ref{cond-degrees-regcond}(a) and $\expec[D_n]\rightarrow \expec[D]$ by Condition \ref{cond-degrees-regcond}(b), we see that the function that is being maximized converges pointwise to
	\eqn{
	\label{G(sk)-def}
	G((s_k)_{k\geq 1})=\sum_{k\geq 1} p_k I(s_k) + B \Big(2\sum_{k \geq 1} s_kp_k -1\Big)+ \expec[D] F_{\beta}\Big(\frac{\sum_{k\geq 1} k p_k s_k}{ \expec[D] }\Big).
	}
We thus arrive at
	\eqan{
	\label{limiting-VP}
	\psinand(\beta,B)&=\frac{\beta \expec[D]}{2}+\sup\limits_{s_k\in(0,1)}G((s_k)_{k\geq 1})+o(1).
	}
This proves Theorem \ref{thm-pressure-CM-ann-deter}, with
	\eqn{
	\label{varphiand-def}
	\varphiand(\beta,B)=\frac{\beta \expec[D]}{2}+\sup\limits_{s_k\in(0,1)}G((s_k)_{k\geq 1}).
	}
We first show that the maximizers of $G$ are attainable. To do this, we will show $G$ is a continuous function in an appropriate compact metric space. 
Let us define 
    $$\kA=[0,1]^{\mathbb{N}}=\{\boldsymbol{s} = (s_k)_{k\geq 1} \colon 0 \leq s_k \leq 1\}.$$
We define a metric on $\kA$ associated with the degree distribution $(p_k)_{k\geq 1}$ as follows. For $\boldsymbol{s}=(s_k)_{k\geq 1}$ and $\boldsymbol{t}=(t_k)_{k\geq 1}$, define \col{the distance}
   $$d(\boldsymbol{s}, \boldsymbol{t}) = \sum_{k\geq 1} kp_k|s_k-t_k|+\sum_{k\geq 1} 2^{-k}|s_k-t_k|\indic{p_k=0}.$$ 
Since $\sum_{k\geq 1} kp_k$ is finite, by standard arguments in functional analysis, we can show that  $(\kA,d)$ is a compact metric space. 
\cla{Indeed, $d(\boldsymbol{s},\boldsymbol{t})$ metrizes the product topology  \cite{Dug66}  according to which  $[0,1]^{\mathbb{N}}$ is compact by Tychonoff's theorem.}
We now show that $G$ is continuous function on $(\kA,d)$.  Suppose that $\boldsymbol{s}^{\sss(n)}$  converges to  $\boldsymbol{s}$ in $(\kA,d)$. We shall show  $G(\boldsymbol{s}^{\sss(n)}) \rightarrow G(\boldsymbol{s})$ as $n\rightarrow \infty$.  By the  definition of $G$ and $d$, 
    \eqan{
    \label{gsns}	
    |G(\boldsymbol{s}^{\sss(n)})-G(\boldsymbol{s})| &\leq \sum_{k\geq 1} p_k |I(s^{\sss(n)}_k)-I(s_k)|+2B \sum_{k\geq 1}p_k|s^{\sss(n)}_k-s_k| \nn\\
     & \qquad + \expec[D] \Big | F_{\beta}\Big(\frac{\sum_{k\geq 1} k p_k s^{\sss(n)}_k}{ \expec[D] }\Big)-  F_{\beta}\Big(\frac{\sum_{k\geq 1} k p_k s_k}{ \expec[D] }\Big) \Big | \nn\\
     & \leq \sum_{k\geq 1} p_k |I(s^{\sss(n)}_k)-I(s_k)| + \Big[2B+\sup_{t\in (0,1)}|F_{\beta}'(t)| \Big] d(\boldsymbol{s}^{\sss(n)},\boldsymbol{s}).
    }
We notice that 
    \eqn{
    	\label{supFp}
    \sup_{t\in (0,1)}|F_{\beta}'(t)| = 2 \beta,
    }
 since 
  \eqn{ \label{fbp}
  	F_{\beta}'(t) = \begin{cases} \log f_{\beta}(t) & \textrm{ if } t\leq 1/2, \\
  		- \log f_{\beta}(1-t) & \textrm{ if } t\geq 1/2,
  	\end{cases}
  }
and $\e^{-2 \beta} \leq f_{\beta} (t) \leq 1$ for all $t\in (0,\tfrac{1}{2})$. Therefore, the second term in \eqref{gsns} converges to $0$ as $n\rightarrow \infty$. To show the convergence of the first term, we notice that for all $\ell\geq 1$, 
   \eqn{
   	\label{isns}
   \sum_{k\geq 1} p_k |I(s^{\sss(n)}_k)-I(s_k)| \leq \sum_{k\leq \ell} p_k |I(s^{\sss(n)}_k)-I(s_k)| + 2 \log 2 \sum_{k\geq \ell}p_k,
   }
since $0\leq I(s) \leq \log 2$ for all $s\in [0,1]$. The first term converges to $0$ as $n \rightarrow \infty$, since $I(\cdot)$ is a  continuous function and $s^{\sss(n)}_k \rightarrow s_k$ for all $k\leq \ell$; the second term converges to $0$ as $\ell \rightarrow \infty$. Hence, we conclude that the first term in \eqref{gsns} converges to $0$ as $n\rightarrow \infty$, and thus $G$ is a continuous function on a compact space. Therefore, $G$ attains maximizers at some points $\boldsymbol{s} \in \kA$. 

In the next lemma, we identify the solution of this optimization problem, and thus give a more explicit formula for $\varphiand(\beta,B)$. This in particular implies that $\varphiand(\beta,B)$ is well defined and thus completes the proof of Theorem \ref{thm-pressure-CM-ann-deter}. We remark that the functional to be optimized in \eqref{G(sk)-def} only depends
on those components $s_k$ whose index $k\in\mathbb{N}$ is such that the asymptotic degree distribution $p_k = \mathbb{P}(D=k)$ satisfies $p_k> 0$.

\begin{lemma}[Solution of the optimization]
\label{lem-sk-sol}
Let $r=\dmin=\min\{i\geq 1\colon p_i>0\}$ be the minimal asymptotic degree and let ${{\mathcal S}}_r :=\{ i \ge r \,:\, p_i >0\}$. Let $B\geq 0$. 
The maximizer $(s^\star_k)_{k\geq 1}$ to \col{\eqref{varphiand-def}} satisfies,
for all $k \in {\mathcal S}_r$,
	\eqn{
	\label{sk-def}
	s_k^\star= \frac{1}{w^{k} \e^{-2B} +1},
	}
where, for $B>0$, $w=w(\beta,B)$ is a solution in $(\e^{-2\beta},1)$ to
	\eqn{
	\label{v-def}
	\frac{1 - \e^{-2 \beta}w }{1+w^2-2\e^{-2 \beta} w } =  \E  \left[ \left(1+w^{D^\star} \e^{-2B} \right)^{-1}\right],
    }
where $D^\star$ is the random variable defined in \eqref{Dstar}.
\end{lemma}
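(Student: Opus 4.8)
The plan is to pin down the maximizer in \eqref{varphiand-def} through first-order optimality conditions and thereby collapse the infinite-dimensional optimization to a scalar fixed-point equation. Since $G$ in \eqref{G(sk)-def} depends only on those $s_k$ with $p_k>0$, it suffices to work with $\boldsymbol s=(s_k)_{k\in\mathcal S_r}$, and a maximizer $\boldsymbol s^\star$ exists by the compactness of $(\kA,d)$ and the continuity of $G$ established above. I would first show $\boldsymbol s^\star$ is interior, i.e. $s_k^\star\in(0,1)$ for every $k\in\mathcal S_r$: the summand $p_kI(s_k)$ has derivative $p_k\log\frac{1-s_k}{s_k}\to\pm\infty$ as $s_k\to 0$ or $1$, while the $s_k$-derivatives of the other two terms of $G$ stay bounded because $\sup_t|F_\beta'(t)|=2\beta$ by \eqref{supFp}, so a boundary value of $s_k^\star$ could be strictly improved. (One also checks here that termwise differentiation of the infinite sum in $G$ is legitimate on any slab $s_k\in[\varepsilon,1-\varepsilon]$, using $\sum_k p_k<\infty$ and $\sum_k kp_k<\infty$.)

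Setting $t^\star:=\big(\sum_k kp_k s_k^\star\big)/\expec[D]$, the stationarity equations $p_kI'(s_k^\star)+2Bp_k+kp_kF_\beta'(t^\star)=0$ become $\log\frac{1-s_k^\star}{s_k^\star}=-2B-kF_\beta'(t^\star)$, which is precisely \eqref{sk-def} with $w:=\e^{-F_\beta'(t^\star)}$. Substituting \eqref{sk-def} back into the definition of $t^\star$ and using \eqref{Dstar} gives $t^\star=\expec[(1+w^{D^\star}\e^{-2B})^{-1}]$, the right-hand side of \eqref{v-def}. To obtain the left-hand side, note that in the regime $t^\star>1/2$ we have $F_\beta'(t^\star)=-\log f_\beta(1-t^\star)$ by \eqref{fbp}, hence $w=f_\beta(1-t^\star)$; inserting the explicit formula \eqref{fbeta-def}, multiplying through by the denominator $2t^\star$, isolating the square root and squaring produces, after the quadratic terms cancel, exactly $t^\star(1+w^2-2\e^{-2\beta}w)=1-\e^{-2\beta}w$, which is \eqref{v-def}. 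Moreover $1-t^\star\in(0,1/2)$ and $f_\beta$ increases from $f_\beta(0)=\e^{-2\beta}$ to $f_\beta(1/2)=1$ on that interval, so automatically $w\in(\e^{-2\beta},1)$. (Independently, \eqref{v-def} does have a root there: the difference of its two sides equals $1-\expec[(1+\e^{-2\beta D^\star}\e^{-2B})^{-1}]>0$ at $w=\e^{-2\beta}$ and $\tfrac12-\e^{2B}/(1+\e^{2B})<0$ at $w=1$.)

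The step I expect to be the real obstacle is proving that the \emph{global} maximizer lies in the branch $t^\star>1/2$ (equivalently $w<1$), and not in the symmetric branch $t^\star<1/2$, $w\in(1,\e^{2\beta})$, which for sufficiently heavy degrees also yields a genuine stationary point of $G$ of the form \eqref{sk-def}. I would reduce $G$ along \eqref{sk-def} to the scalar function $\widetilde G(w):=G(\boldsymbol s(w))$; using $I'(s_k(w))=k\log w-2B$ the derivative collapses to $\widetilde G'(w)=\big(\log w+F_\beta'(t(w))\big)\,m'(w)$, where $t(w)=\expec[(1+w^{D^\star}\e^{-2B})^{-1}]$ and $m(w)=\expec[D]\,t(w)$ is strictly decreasing, so that stationary points are the zeros of $h(w):=\log w+F_\beta'(t(w))$. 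Since $t(1)=\e^{2B}/(1+\e^{2B})>1/2$ for $B>0$, one has $h(1)=F_\beta'(t(1))>0$, while $h(\e^{-2\beta})=-2\beta+F_\beta'(t(\e^{-2\beta}))<0$ because $t(\e^{-2\beta})<1$; combined with the convexity of $F_\beta$ this should force the maximizing $w$ into $(\e^{-2\beta},1)$. A cleaner route, which I would try to push through, is to observe that $w=1$ and $w=\e^{-2\beta}$ are never stationary when $B>0$ (they would force $t=1/2$, resp. $t=1$), while for $B$ large the maximizer satisfies $w(\beta,B)\to\e^{-2\beta}$ from above (all $s_k^\star\to1$, so $t^\star\to1$ and $F_\beta'(t^\star)\to2\beta$); hence the optimal $w(\beta,B)$, continuous in $B$ away from the countably many $B$ where $\varphiand(\beta,\cdot)$ fails to be differentiable, cannot cross out of $(\e^{-2\beta},1)$ as $B$ decreases. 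Finally, $B=0$ is treated via the spin-flip symmetry $G(\boldsymbol s)=G(\boldsymbol 1-\boldsymbol s)$, which puts maximizers in pairs $\{w,1/w\}$, and one selects $w=\lim_{B\downarrow0}w(\beta,B)\le1$. Plugging the resulting $\boldsymbol s^\star$ into \eqref{varphiand-def} then yields the explicit form of $\varphiand(\beta,B)$ and completes the proof of Theorem \ref{thm-pressure-CM-ann-deter}.
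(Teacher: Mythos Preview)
Your overall strategy---interiority, stationarity, the parametrization $s_k^\star=1/(1+w^k\e^{-2B})$ with $w=\e^{-F_\beta'(t^\star)}$, and the algebra leading to \eqref{v-def}---is correct and matches the paper's route (the paper eliminates $F_\beta'$ by taking linear combinations of the equations \eqref{smx} for $k$ and $r$, but this is cosmetic). Your computation that $w=f_\beta(1-t^\star)$ unwinds to $t^\star(1+w^2-2\e^{-2\beta}w)=1-\e^{-2\beta}w$ is right.

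Where you diverge is exactly the step you flag as the obstacle, namely $t^\star>1/2$. The paper resolves this cleanly \emph{before} writing down the stationarity equations, by a direct symmetrization argument that you are missing. Suppose some $s_\ell^\star<1/2$ and set $u_k^\star:=s_k^\star\vee(1-s_k^\star)\ge 1/2$. Then $I(u_k^\star)=I(s_k^\star)$ by symmetry of $I$; the magnetic term satisfies $B(2\sum_k p_ku_k^\star-1)>B(2\sum_k p_ks_k^\star-1)$ for $B>0$; and since $F_\beta$ is symmetric about $1/2$ and monotone away from $1/2$, while
\[
\Big|\tfrac{1}{\E[D]}\sum_k kp_k u_k^\star-\tfrac12\Big|
=\tfrac{1}{\E[D]}\sum_k kp_k\,|s_k^\star-\tfrac12|
\ge\Big|\tfrac{1}{\E[D]}\sum_k kp_k s_k^\star-\tfrac12\Big|,
\]
one gets $F_\beta\big(\sum_k kp_k u_k^\star/\E[D]\big)\ge F_\beta\big(\sum_k kp_k s_k^\star/\E[D]\big)$. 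Hence $G((u_k^\star))>G((s_k^\star))$, contradicting maximality. This gives $s_k^\star\ge 1/2$ for all $k\in\mathcal S_r$ and therefore $t^\star\ge 1/2$ in one stroke; combined with $s_k^\star<1$ (your $I'$-blow-up argument) and $w=1$ not solving \eqref{v-def} when $B>0$, the range $w\in(\e^{-2\beta},1)$ follows immediately.

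Your alternative route via continuation in $B$ has a genuine gap: you assert that the optimal $w(\beta,B)$ is ``continuous in $B$ away from the countably many $B$ where $\varphiand(\beta,\cdot)$ fails to be differentiable,'' but differentiability of the \emph{value} does not rule out jumps of the \emph{argmax} (there could be several maximizing $w$'s with equal $\widetilde G$), so the barrier argument that $w$ cannot exit $(\e^{-2\beta},1)$ is not justified as written. The symmetrization above makes this entire detour unnecessary.
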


\proof We first show that if $(s_k^\star)_{k\geq 1}$ is a maximizer of $G$, then for all \col{$k\in {\mathcal S}_r$},
  \begin{itemize}
  	\item[(a)] $1/2 \leq s_k^\star <1$,
  	\item[(b)] $\partial G ((s_k^\star)_{k\geq 1})/\partial s_k=0$.
  \end{itemize}

We prove (a) by contradiction. Assume that there is an index $\ell \in {\mathcal S}_r$ such that $s^\star_{\ell} < \tfrac{1}{2}$. To show that $(s_k^\star)_{k\geq 1}$ is not a maximizer,  it suffices to prove that 
$G(\col{(u_k^{\star})_{k\geq 1}}) > G(\col{(s_k^{\star})_{k\geq 1}})$, with $u_k^\star=s_k^\star \vee (1-s_k^\star)$ for ${k\geq 1}$, \mo{where $ x \vee y= \max \{ x,y\}$}. We  observe that  $u_k^\star\geq \tfrac{1}{2}$ for all $k$, so $\sum_{k\geq 1} k p_k  u_k^\star/\E[D]\geq \tfrac{1}{2}$. Recall that $F_{\beta}(t)=F_{\beta}(1-t)$ for $t\in [0,1]$,
%
%
\color{black}and $F_{\beta}$ is decreasing in $(0, \tfrac{1}{2}]$ and increasing in $[\tfrac{1}{2},1)$, while
	\eqan{
	\Big|\frac{1}{\E[D]}\sum_{k\geq 1} k p_k  u_k^\star-\tfrac{1}{2}\Big|
	&=\Big|\frac{1}{\E[D]}\sum_{k\geq 1} k p_k  (s_k^\star \vee (1-s_k^\star)-\tfrac{1}{2})\Big|\\
	&=\frac{1}{\E[D]}\sum_{k\geq 1} k p_k  |\mo{s_k^\star}-\tfrac{1}{2}|\geq \Big|\frac{1}{\E[D]}\sum_{k\geq 1} k p_k  (\mo{s^\star_k}-\tfrac{1}{2})\Big|,\nn
	}
so that $\sum_{k\geq 1} k p_k  u_k^\star/\E[D]$ is further from $\tfrac{1}{2}$ than $\sum_{k\geq 1} k p_k  s_k^\star/\E[D]$. Hence, 
     \eqn{
     	 F_{\beta} \Big(\sum_{k\geq 1} k p_k  u_k^\star/\E[D]\Big) \geq 
     	 F_{\beta} \Big(\sum_{k\geq 1} k p_k  s_k^\star/\E[D]\Big).
          }
\color{black}
Furthermore,
		  \eqn{
		  B \Big(2\sum_{k\geq 1} p_k  u_k^\star  -1\Big) > B\Big(2\sum_{k\geq 1}  p_k s_k^\star  -1\Big).
		  }
Finally, $I(u_k^\star)=I(s_k^\star)$ for all $k\geq 1$. We conclude that $G ((u_k^\star)_{k\geq 1}) > G((s_k^\star)_{k\geq 1})$, which is a contradiction with $(s_k^\star)_{k\geq 1}$ being a maximizer.

Now assume that there is an index $\col{\ell \in\mathcal{S}_r}$, such that $s_{\ell}^\star =1$. We compute 
       \eqn{
       \label{dogk}
       \frac{\partial G}{\partial s_k} = \col{p_k \left(I'(s_k) +2  B + k F_{\beta}' \left(  \frac{\sum_{j\geq 1} j p_j s_j}{\expec[D]} \right)\right)}.
       }
Therefore, $\partial G ((s_k^\star)_{k\geq 1})/\partial s_{\ell}=-\infty$ for $s_{\ell}=1$, since $I'(1)=-\infty$ and $F_{\beta}'$ is a bounded function (see \eqref{supFp}). Hence, there exists a small positive constant $\varepsilon$, such that $G((u_k^\star)_{k\geq 1}) > G((s_k^\star)_{k\geq 1})$, with   $u_k^\star=s_k^\star$ for $k\neq \ell$ and $u_{\ell}^\star=1-\varepsilon$. This completes the proof of (a).

The proof of (b) is similar. Assume that there exists a maximizer $(s_k^\star)_{k\geq 1}$ satisfying $\tfrac{\partial G ((s_k^\star)_{k\geq 1})}{\partial s_{\ell}} \neq 0$ for some $\col{\ell \in\mathcal{S}_r}$. Thanks to part (a), $1/2 \leq s_{\ell}^\star <1$. Hence, there exists $(u_k^\star)_{k\geq 1}$, with $u_k^\star =s_k^\star$ for $k\neq \ell$ and $u^\star_{\ell} \in (s^\star_{\ell}-\varepsilon, s^\star_{\ell}+\varepsilon)$ with $\varepsilon$ small enough, such that $G((u_k^\star)_{k\geq 1}) > G((s_k^\star)_{k\geq 1})$, which again gives a contradiction. Thus, also part (b) follows.

By part (b), all maximizers are solutions to the systems of equations $\partial G/\partial s_k =0$ \col{for all $k \in {\mathcal S}_r$}. By part (a), all maximizers $(s_k)_{k\geq 1}$ satisfy $s_k\geq 1/2$  \col{for all $k \in {\mathcal S}_r$}, so that also $\sum_{k\geq 1} kp_ks_k/\E[D] \geq 1/2$. Hence,
    \eqn{
    F_{\beta}' \left(  \frac{\sum_{k\geq 1} k p_k s_k}{\expec[D]} \right) = -\log f_{\beta} \left(1-  \frac{\sum_{k\geq 1} k p_k s_k}{\expec[D]} \right),
    }
by \eqref{fbp}.  Combining this equation with \eqref{dogk} and (b), we conclude that the maximizers in \eqref{limiting-VP} are solutions of 
   	\eqn{
	\label{smx}
	I'(s_k) +2  B - k \log f_{\beta} \left( 1 - \frac{\cla{\sum_{j\geq 1} j p_j s_j}}{\expec[D]} \right) =0, \qquad \col{k \in {\mathcal S}_r}.
   	}
In \eqref{smx}, we multiply the equation for $s_r$ with $r=\dmin$ by $k$ and substract the equation for \col{$s_k$ with $k\in \mathcal{S}_r \setminus \{r\}$} multiplied by $r$, which gives 
	\eqn{
	k I'(s_r) - r I'(s_k) +2(k-r) B =0.
	}
Note that	
	\eqn{
	I'(s)=\log\Big(\frac{1-s}{s}\Big).
	}
Thus, we arrive at, now writing $s=s_r$,
	\eqn{
	\Big( \frac{1-s}{s}\Big)^k = \left( \frac{1-s_k}{s_k}\right)^r \e^{-2B(k-r)},
	}
which is equivalent to
	\eqn{
	\frac{1-s_k}{s_k} = \left( \frac{1-s}{s}\right)^{k/r} \e^{2(k-r)B/r}.
	}
We conclude that
	\eqn{
	\label{sk-def-b}
	s_k=s_k(s) = \left[\left(\e^{2B} \frac{1-s}{s}\right)^{k/r} \e^{-2B} +1 \right]^{-1}.
	} 
Defining
	\eqn{
	\label{v-def-r}
	w= \left(\e^{2B}\frac{1-s}{s}\right)^{1/r},
	}
we thus arrive at
	\eqn{
	\label{sk-def-wk}
	s_k= \left[w^{k} \e^{-2B} +1 \right]^{-1}.
	}
This proves \eqref{sk-def} \col{for all $k \in {\mathcal S}_r$}.

In terms of $w$ and using \cla{\eqref{sk-def-wk}}, \col{equation} \eqref{smx} \col{with $k=r$} becomes
\cla{
	\eqn{
	\log (w^r \e^{-2B}) +2B - r\log f_{\beta} \left( 1 - \cla{\frac{\sum_{j\geq 1} j p_j  \left[w^{j} \e^{-2B} +1 \right]^{-1}}{\expec[D]}} \right) =0,
	}
	}
which is equivalent to 
	\eqn{
	\label{v-equa-a}
	 w= f_{\beta} \left(1 - \E \left(\left[ 1 +  w^{D^\star}\e^{-2B}\right]^{-1} \right) \right),
	 }
where $D^\star$ is the size-biased distribution of $D$ given by 
	\eqn{
	\pp(D^\star=k) = \frac{k p_k}{\col{\expec[D]}}.
	}
Note that $f_{\beta}$ takes values in $(\e^{-2\beta},1]$, so $w\in(\e^{-2\beta},1]$. 
We now compute the inverse function of $f_{\beta}$ as 
	\eqn{
	f_{\beta}^{-1}(y)= \frac{y^2 - \e^{-2 \beta}y}{1+y^2-2\e^{-2 \beta} y}.
	}
Then \eqref{v-equa-a} becomes 
	\eqn{
 	\frac{1 - \e^{-2 \beta}w }{1+w^2-2\e^{-2 \beta} w } =  \E  \left[ \left(1+w^{D^\star} \e^{-2B} \right)^{-1}\right],
	}
which is \eqref{v-def}.

We can see that if $B >0 $, then $w=1$ is not a solution to the above equation. 
Thus, we only need to consider the solution in $(\e^{-2\beta},1)$. 
\qed
\medskip

\noindent
{\it Proof of Theorem \ref{thm-pressure-CM-ann-deter}.} Due to Lemma \ref{lem-sk-sol}, combined with \col{\eqref{varphiand-def}}, we see that $\varphiand(\beta,B)$ is well defined, and thus  \col{\eqref{varphiand-def}} completes the proof.
\qed
\medskip

Below, we will rely on the fact that if we know the solution of \eqref{v-def}, then we can find the annealed pressure by  \col{\eqref{varphiand-def}} and \eqref{sk-def}. This will be used to identify the critical value.

\subsection{Critical value for deterministic degrees: Proof of Theorem \ref{thm-crit-CM-ann-deter}}
\label{sec-crit-value-deter}
We aim to show  that the critical inverse temperature is 
 	\begin{equation} 
	\label{etp}
	\betacand= \betacqud  = \atanh(1/\nu)= \frac{1}{2} \log \left( \frac{\E[D^\star]}{\E[D^\star]-2} \right),
 	\end{equation}
\mo{where  $\nu$  is defined in \eqref{nu-def} and the fact that $\nu= \E[D^\star]-1$ is used.}

We have already shown that 
    $$\varphiand(\beta,B)=\frac{\beta \expec[D]}{2}+\sup\limits_{s_k\in(0,1)}G((s_k)_{k\geq 1}).
    $$
By Lemma \ref{lem-sk-sol},
   	\eqan{
	\label{kmg}
     	\kM_G &= \{(s_k)_{\col{k \in {\mathcal S}_r}} \colon 0\leq s_k \leq 1, (s_k)_{\col{k \in {\mathcal S}_r}} \textrm{ is a maximizer of }G(\cdot) \} \nn\\
      	&\subseteq \{(s_k(w,B))_{\col{k \in {\mathcal S}_r}} \colon \e^{-2 \beta} \leq w \leq 1, w \textrm{ is a solution to } \eqref{v-def}\},
	}
where
	\eqn{
	\label{sk-wB-def}
	s_k(w,B)=\frac{\e^{2B}}{\e^{2B}+w^k}.
	} 
\col{and we recall that ${{\mathcal S}}_r :=\{ i \ge r \,:\, p_i >0\}$ with
$r=\dmin=\min\{i\geq 1\colon p_i>0\}$  the minimal asymptotic degree.} 

\begin{lemma}[Spontaneous magnetization]
	\label{lem-crit-v} 
The spontaneous magnetization defined in \eqref{spon-M-def-qu-ann-d} satisfies that
	\begin{itemize}
		\item[(a)] \mo{$\Msand(\beta) =0$} when $\beta < \betacqud$;
		
		\item[(b)] \mo{$\Msand(\beta) >0$} when $\beta > \betacqud$.
	\end{itemize}
\end{lemma}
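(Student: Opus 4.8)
The plan is to combine the variational formula $\varphiand(\beta,B)=\tfrac{\beta\E[D]}{2}+\max_{\boldsymbol{s}\in\kA}G_B(\boldsymbol{s})$ (writing $G_B$ for the functional in \eqref{G(sk)-def} to display its $B$-dependence) with convexity of $\varphiand(\beta,\cdot)$, and then to reduce everything to a one-variable analysis of the fixed-point map from Lemma~\ref{lem-sk-sol} at $B=0$. First I would record a pair of envelope inequalities: since $G_B(\boldsymbol{s})-G_0(\boldsymbol{s})=B\big(2\sum_k p_k s_k-1\big)$, comparing a maximizer $\boldsymbol{s}_B$ of $G_B$ with a maximizer $\boldsymbol{s}_0$ of $G_0$ and using that each is optimal for its own functional yields, for every $B>0$,
\[
2\sum_k p_k (s_0)_k-1\ \le\ \frac{1}{B}\big[\varphiand(\beta,B)-\varphiand(\beta,0)\big]\ \le\ 2\sum_k p_k (s_B)_k-1 .
\]
By \eqref{sk-def}, $\sum_k p_k (s_B)_k=\E\big[(1+w(\beta,B)^{D}\e^{-2B})^{-1}\big]$ with $w(\beta,B)\in(\e^{-2\beta},1)$ the maximizing solution of \eqref{v-def}, and $\sum_k p_k (s_0)_k=\E\big[(1+w_0^{D})^{-1}\big]$ with $w_0\in[\e^{-2\beta},1]$ the solution of \eqref{v-def} at $B=0$ attached to a maximizer of $G_0$. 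Letting $B\searrow 0$ and using convexity, $\Msand(\beta)$ is squeezed between $2\E[(1+w_0^D)^{-1}]-1$ and $\liminf_{B\searrow 0}\big(2\E[(1+w(\beta,B)^D\e^{-2B})^{-1}]-1\big)$.

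Next I would set $\Phi_\beta(w):=f_\beta\big(1-\E[(1+w^{D^\star})^{-1}]\big)$ on $[\e^{-2\beta},1]$ and collect the relevant facts. Along the curve $w\mapsto\boldsymbol{s}(w):=\big((1+w^{k})^{-1}\big)_{k\geq 1}$ (the $B=0$ form of \eqref{sk-def}), differentiating and using \eqref{fbp} gives $\tfrac{d}{dw}G_0(\boldsymbol{s}(w))=c(w)\log\!\big(\Phi_\beta(w)/w\big)$ with $c(w)>0$; hence the critical points of $w\mapsto G_0(\boldsymbol{s}(w))$ are exactly the fixed points of $\Phi_\beta$, and by Lemma~\ref{lem-sk-sol} together with the symmetry $G_0((s_k)_k)=G_0((1-s_k)_k)$ every maximizer of $G_0$ over $\kA$ equals $\boldsymbol{s}(w)$ or its mirror $(1-s_k(w))_{k\geq1}$ for some such fixed point. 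Also $w=1$ is always a fixed point, $\Phi_\beta(\e^{-2\beta})>\e^{-2\beta}$, and a direct computation (using \eqref{supFp}, \eqref{fbp} and the definition of $D^\star$) gives
\[
\Phi_\beta'(1)=\tfrac12\,(1-\e^{-2\beta})\,\E[D^\star],
\]
so that $\Phi_\beta'(1)<1$ exactly when $\beta<\betacqud$ and $\Phi_\beta'(1)>1$ exactly when $\beta>\betacqud$, with $\betacqud=\tfrac12\log\frac{\E[D^\star]}{\E[D^\star]-2}$ as in \eqref{etp}.

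For part (b), i.e.\ $\beta>\betacqud$: here $\Phi_\beta'(1)>1$ gives $\Phi_\beta(w)<w$ for $w<1$ near $1$, so $G_0(\boldsymbol{s}(w))>G_0(\boldsymbol{s}(1))$ there; since also $\tfrac{d}{dw}G_0(\boldsymbol{s}(w))>0$ at $w=\e^{-2\beta}$, the maximum of $w\mapsto G_0(\boldsymbol{s}(w))$ is attained at some interior fixed point $w_\star\in(\e^{-2\beta},1)$, and $\boldsymbol{s}(w_\star)$ is then a maximizer of $G_0$ over $\kA$. Taking $\boldsymbol{s}_0=\boldsymbol{s}(w_\star)$ in the lower bound of the first step gives $\Msand(\beta)\geq 2\E[(1+w_\star^{D})^{-1}]-1>0$, since $w_\star<1$ forces $w_\star^D<1$ almost surely. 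For part (a), i.e.\ $\beta<\betacqud$: the key claim will be that $w=1$ is the only fixed point of $\Phi_\beta$ in $[\e^{-2\beta},1]$; granting it, $w\mapsto G_0(\boldsymbol{s}(w))$ is strictly increasing, so the unique maximizer of $G_0$ over $\kA$ is the symmetric point $\boldsymbol{s}(1)=(\tfrac12,\tfrac12,\dots)$, whence $w_0=1$ and the lower bound above equals $0$; moreover, since \eqref{v-def} depends continuously on $(w,B)$, every subsequential limit of $w(\beta,B)$ as $B\searrow 0$ solves \eqref{v-def} at $B=0$ in $[\e^{-2\beta},1]$ and therefore equals $1$, so $w(\beta,B)\to1$ and the upper bound above tends to $0$ as well. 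Hence $\Msand(\beta)=0$.

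The hard part will be the key claim in part (a): showing that for $\beta<\betacqud$ equation \eqref{v-def} at $B=0$ has no solution in $[\e^{-2\beta},1)$, equivalently that $\Phi_\beta(w)>w$, i.e.\ $\frac{1-\e^{-2\beta}w}{1+w^2-2\e^{-2\beta}w}>\E\big[(1+w^{D^\star})^{-1}\big]$, for all $w\in[\e^{-2\beta},1)$ whenever $(1-\e^{-2\beta})\E[D^\star]<2$. I expect this to require a careful one-variable study exploiting the explicit form \eqref{fbeta-def} of $f_\beta$ and the size-biased law $D^\star$ — for instance a concavity/monotonicity property of $\Phi_\beta$ (or of $w\mapsto\Phi_\beta(w)/w$) on $[\e^{-2\beta},1]$, which would combine with $\Phi_\beta(1)=1$, $\Phi_\beta'(1)<1$ and $\Phi_\beta(\e^{-2\beta})>\e^{-2\beta}$ to exclude any interior fixed point. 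The companion continuity fact $\lim_{B\searrow0}w(\beta,B)=1$ then follows by compactness and this uniqueness.
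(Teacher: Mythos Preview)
Your envelope-inequality framework is sound, and for part (b) it is in fact cleaner than the paper's route. The paper establishes (b) by a second-derivative computation showing that stationary points near $w=1$ are not maximizers, and then passes through a finite-volume argument on the annealed measure together with convexity to push this to the limit. Your observation that any maximizer $\boldsymbol{s}_0$ of $G_0$ sits on the curve $w\mapsto\boldsymbol{s}(w)$ (Lemma~\ref{lem-sk-sol}), together with $\Phi_\beta'(1)>1$ forcing the maximum to occur at an interior $w_\star<1$, gives immediately via the lower envelope bound $\Msand(\beta)\geq 2\E[(1+w_\star^D)^{-1}]-1>0$. The derivative identity $\tfrac{d}{dw}G_0(\boldsymbol{s}(w))=c(w)\log(\Phi_\beta(w)/w)$ and the value $\Phi_\beta'(1)=\tfrac12(1-\e^{-2\beta})\E[D^\star]$ are both correct.

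The genuine gap is in part (a): you correctly isolate as the ``key claim'' that for $\beta<\betacqud$ the equation \eqref{v-def} at $B=0$ has no solution in $[\e^{-2\beta},1)$, but you do not prove it, and your proposed mechanism (concavity or monotonicity of $\Phi_\beta$ itself) is not the right lever. The paper proves exactly this claim, but via Jensen's inequality applied to $x\mapsto (1+w^x)^{-1}$, which is concave in $x$ for $w\in(0,1)$; this yields $\E[(1+w^{D^\star})^{-1}]\leq (1+w^m)^{-1}$ with $m=\E[D^\star]$, and hence $\Phi_\beta(w)\geq f_\beta\big(w^m/(1+w^m)\big)$. The inequality $f_\beta\big(w^m/(1+w^m)\big)>w$ on $(\e^{-2\beta},1)$ then reduces, via $f_\beta^{-1}$, to showing that the polynomial $\tilde h_1(w)=cw^m-w^{m-1}+w-c$ (with $c=\e^{-2\beta}$) is strictly negative on $(0,1)$; since $c>(m-2)/m$ when $\beta<\betacqud$, one checks $\tilde h_1$ is increasing on $[0,1]$ with unique root $w=1$. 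With this claim in hand, your compactness argument that any subsequential limit of $w(\beta,B)$ solves \eqref{v-def} at $B=0$ and hence equals $1$ is valid, and the upper envelope bound closes as you describe. So your program for (a) works, but it needs the Jensen reduction rather than an unproven structural property of $\Phi_\beta$.
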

%
%
\noindent 
\col{\it Proof of Lemma \ref{lem-crit-v}}
 We start with part (a). Assume that $\beta < \betacqud$.  Then 
	\eqn{
	c=\e^{-2\beta} > \e^{-2\col{\betacqud}} = \frac{m-2}{m}, \qquad \textrm{ with } \quad m= \E[D^\star]. 
	}
Define 
    $$
    a=\e^{2B}.
    $$
The equation \eqref{v-def} becomes 
	\begin{equation}
	g(w):= \E  \left[ \frac{a}{a+w^{D^\star}}  \right] - \frac{1 - cw }{1+w^2-2c w} =0.
	\end{equation}
We observe that for any $w \in (0,1)$, the function $x \mapsto a/(a+w^x) $ is concave. Therefore, by Jensen's inequality,
	\begin{equation} 
	\label{glh}
	g(w) \leq h_a(w):=  \frac{a}{a+ w^{m}} - \frac{1 - cw }{1+w^2-2c w}.
	\end{equation}
 For $w\in(0,1)$,
	\begin{equation}
	h_a(w)=0  \quad \Leftrightarrow \quad \tilde{h}_a(w): =  c w^m -w^{m-1} +aw -ac =0.
	\end{equation}
We notice that $h_a(0)=0, h_a'(0^+)<0$ and $g(w) \leq h_a(w)$. Hence, 
	\begin{eqnarray*}
	\underline{w} &:= & \inf \{w\in (\e^{-2\beta},1)\colon g(w)=0\} \\
	&\geq&  \inf \{w\in (\e^{-2\beta},1)\colon h_a(w)=0\} =  \inf \{w\in (\e^{-2\beta},1)\colon \tilde{h}_a(w)=0\} =:\underline{w}_a.
	\end{eqnarray*}
We have 
\begin{eqnarray*}
	\tilde{h}_a'(w)&=& c m w^{m-1} - (m-1)w^{m-2}+a \\
	\tilde{h}_a''(w)&=&(m-1)w^{m-3}(cmw-(m-2)).
\end{eqnarray*}
Since $c> (m-2)/m$, one has $(m-2)/(cm) \in (0,1)$.  Thus, $\tilde{h}_a''(v)$ changes its sign from negative to positive  when $w$ crosses $(m-2)/(cm)$. Hence,   $\tilde{h}_a'(v)$ attains the minimum value at $(m-2)/(cm)$. Using $c> (m-2)/m$,  \mo{ we obtain that 
	\eqn{
	\tilde{h}_a' \left( \tfrac{m-2}{cm} \right) =a-\left( \tfrac{m-2}{cm} \right)^{m-2}
	}
is positive for any $a\geq 1$, therefore  $\tilde{h}_a$ is an increasing function in $[0,1]$,  and $\tilde{h}_a(w)=0$ has a unique solution, say  $\underline{w}_a$}. Moreover, the unique solution $w$ of $\tilde{h}_{1}(w)=0$ is $1$. We can check that $\tfrac{\partial \tilde{h}_1}{\partial w}(1) = (mc-(m-2)) \neq 0$. Therefore, the implicit function theorem gives that $\underline{w}_a$ is a differentiable function in $a$, satisfying that $\underline{w}_a \rightarrow 1$ as $a\rightarrow 1$ and 
	\eqn{
	\label{wa-diff}
	\frac{\partial \underline{w}_a}{\partial a} \Big \arrowvert_{a=1} 
	= - \frac{\tfrac{\partial \tilde{h}_a(w)}{\partial a}  \arrowvert_{(w,a)=(1,1)}}{\tfrac{\partial \tilde{h}_a(w)}{\partial w}  \arrowvert_{(w,a)=(1,1)}} = -\frac{1}{mc-(m-2)}.
	}
Thus, $a\mapsto \underline{w}_a$ is differentiable in $a=1$ with bounded derivative. Further, \colrev{under the assumption that $\beta < \betacqud$, if $B=0$ then the equation \eqref{v-def} has the unique solution $w=1$ (this follows combining the fact that
the unique solution $\underline{w}_1$ of $\tilde{h}_{1}(w)=0$ is $1$ and the fact that the solution $w$ of \eqref{v-def}
is bounded by  $\underline{w}_1 \le w \le 1$)}.
Therefore,  
 	\begin{equation}
 	\varphiand(\beta,0)=\mo{\beta\frac{\mathbb{E}(D)}{2}} + G_0((s_k(1,0))_{\col{k \in {\mathcal S}_r}}),
 	\end{equation}
where $G_0(\cdot)$ denotes the function $G(\cdot)$ when $B=0$.	
 We remark that 
 \mo{
 	\begin{eqnarray} \label{msub}
	 \Mand(\beta) 	 &=& \lim_{B\searrow 0}
	 \frac{\varphiand(\beta,B)-\varphiand(\beta,0)}{B} \nn\\
	 &=&\lim_{B\searrow 0} \frac{G((s_k(w^\star,B))_{{k \in {\mathcal S}_r}})-G_0((s_k(1,0))_{{k \in {\mathcal S}_r}})}{B} \notag  \\
 	&=&\lim_{B\searrow 0} \frac{G((s_k(w^\star,B))_{{k \in {\mathcal S}_r}})-G((s_k(1,B))_{{k \in {\mathcal S}_r}})}{B} \nn\\
 	&&+ \lim_{B\searrow 0} \frac{G((s_k(1,B))_{{k \in {\mathcal S}_r}}))-G_0((s_k(1,0))_{{k \in {\mathcal S}_r}})}{B},
 	\end{eqnarray}
	}
 	with $w^\star$ a solution of  \eqref{v-def}.
 Using the fact that $(s_k(1,0))_{\col{k \in {\mathcal S}_r}}=(1/2)_{\col{k \in {\mathcal S}_r}}$ is a stationary point of $G_0$, we can show that
 	\begin{eqnarray} \label{hsub}
 	&&\lim_{B\searrow 0} \frac{G((s_k(1,B))_{\col{k \in {\mathcal S}_r}}))-G_0((s_k(1,0))_{\col{k \in {\mathcal S}_r}})}{B}  \notag \\
 	&&\qquad = \frac{\partial G\big((1/2)_{\col{k \in {\mathcal S}_r}}\big)}{ \partial B} \Big \arrowvert_{B=0} + \sum_{\col{k \in {\mathcal S}_r}} \frac{\partial G\big((s_k)_{\col{k \in {\mathcal S}_r}}\big)}{ \partial s_k} 
	\Big \arrowvert_{(s_k)_{\col{k \in {\mathcal S}_r}}=(1/2)_{\col{k \in {\mathcal S}_r}}, B=0} 
	\times \frac{\partial s_k}{ \partial B} \Big \arrowvert_{B=0} =0,
 	\end{eqnarray}
\colrev{where we recall that the function} \changed{$G=G_{\beta,B}$ both explicitly depends on the external field $B$, as well as
	implicitly depends on $B$ through the $s_k$ variables.}{}
  By the mean-value theorem, there is some $\tilde{w} \in (w^\star,1)$, such that $G((s_k(w^\star,B))_{\col{k \in {\mathcal S}_r}})-G((s_k(1,B))_{\col{k \in {\mathcal S}_r}}) =G'((s_k(\tilde{w},B))_{\col{k \in {\mathcal S}_r}})(w^\star -1)$. Therefore, using the facts that $\underline{w}_a \leq \underline{w}\leq w^\star \leq \tilde{w} \leq 1$ and that $\underline{w}_a \rightarrow 1$ as $a \searrow 1$ (or equivalently $B \searrow 0$), 
 	\begin{eqnarray} \label{hsb}
	&& \lim_{B\searrow 0}\frac{|G((s_k(w^\star,B))_{\col{k \in {\mathcal S}_r}})-G((s_k(1,B))_{\col{k \in {\mathcal S}_r}})|}{B}=\lim_{B\searrow 0} \frac{|G'((s_k(\tilde{w},B))_{\col{k \in {\mathcal S}_r}})||w^\star -1|}{B} \nn \\
	&&\leq  \lim_{B\searrow 0} \frac{|G'((s_k(\tilde{w},B))_{\col{k \in {\mathcal S}_r}})|(1 - \underline{w})}{B}   \leq  \lim_{B\searrow 0} \frac{|G'((s_k(\tilde{w},B))_{\col{k \in {\mathcal S}_r}})|(1 - \underline{w}_a)}{B}  \nn\\
	&&= \lim_{B\searrow 0} |G'((s_k(\tilde{w},B))_{\col{k \in {\mathcal S}_r}})| \lim_{a\searrow 1 } \frac{(1 - \underline{w}_a)}{a-1} \lim_{B	\searrow 0 } \frac{\e^{2B}-1}{B},
 	\end{eqnarray}
by the chain rule. The right-hand side vanishes by \eqref{wa-diff}, since the first term converges to $0$ since $G'((s_k(1,0))_{\col{k \in {\mathcal S}_r}})=0$ and since the second and third terms are bounded. This, together with \eqref{msub}, shows that \mo{$\Msand(\beta)=0$}, which completes the proof of (a).
\medskip

We now prove  (b). Assume that  $\beta > \betacqud$. We will prove that there exist positive constants $\theta=\theta(\beta)$ and $\delta=\delta(\beta)$, such that, for all \mo{$0 <  B\leq \delta$},
	\eqn{
	\label{mxw}
	\kM_G \subseteq \{(s_k(w,B))_{\col{k \in {\mathcal S}_r}} \colon \e^{-2 \beta} \leq w \leq 1-\theta, w \textrm{ is a solution to } \eqref{v-def}\}.
	}
Let $w^\star \in (\e^{-2\beta}, 1)$ be any solution to $\eqref{v-def}$. Then $(s_k(w^\star,B))_{\col{k \in {\mathcal S}_r}}$ is a stationary point of $G$, and thus
    \eqn{
    G'((s_k(w^\star,B))	= \sum_{\col{k \in {\mathcal S}_r}} \frac{\partial G\big((s_k(w,B))_{\col{k \in {\mathcal S}_r}}\big)}{ \partial s_k} 
    \Big \arrowvert_{w=w^\star}  \times s_k'(w^\star,B)) =0. 
    }
Moreover,  we can compute that
	\eqan{
	G''((s_k(w^\star,B)) &= \sum_{i,\col{k \in {\mathcal S}_r}}\frac{\partial^2 G\big((s_k(w,B))_{\col{k \in {\mathcal S}_r}}\big)}{ \partial s_k \partial s_i} 	\Big \arrowvert_{w=w^\star}  \times s_k'(w^\star,B) s'_i(w^\star,B)\nn\\
	&\qquad + \sum_{\col{k \in {\mathcal S}_r}}\frac{\partial G\big((s_k(w,B))_{\col{k \in {\mathcal S}_r}}\big)}{ \partial s_k } 	\Big \arrowvert_{w=w^\star}  \times s_k''(w^\star, B) \nn  \\
	&=\sum_{i,\col{k \in {\mathcal S}_r}}\frac{\partial^2 G\big((s_k(w,B))_{\col{k \in {\mathcal S}_r}}\big)}{ \partial s_k \partial s_i} 	\Big \arrowvert_{w=w^\star}  \times \mo{s_k'(w^\star,B)} s'_i(w^\star,B) =:H(w^\star,B), 
	}
with 
    	$$ 
    	H(w,B )= \sum_{\col{k \in {\mathcal S}_r}} p_k I''(s_k)(\mo{s'_k(w,B)}) ^2  -  \frac{ f'_{\beta} \left(1- \tfrac{\sum_{k\geq 1} ks_k p_k }{\expec[D]} \right)}
	{f_{\beta} \left(1- \tfrac{\sum_{k\geq 1} k s_k p_k}{\expec[D]} \right)} \frac{1}{\expec[D]}  
    	\left(\sum_{\col{k \in {\mathcal S}_r}} kp_ k s_k'(w,B)\right)^2.
     	$$
When $w\rightarrow 1$ and $B\rightarrow 0$ (or equivalently $a\rightarrow1$), for all $k \in {\mathcal S}_r$,
	\begin{eqnarray}
	s_k(w, B)\rightarrow \tfrac{1}{2}, \quad s_k'(w, B) \rightarrow -\frac{k}{4}, \quad   I''(s_k(w,B)) \rightarrow -4.
	\end{eqnarray}
Moreover, 
	$$
	\frac{f'_{\beta}(1/2)}{f_{\beta}(1/2)} = 2(1-\e^{-2\beta}).
	$$
Therefore, as $w\rightarrow 1$ and $B\rightarrow 0$,
	\begin{eqnarray}
	H(w,B) \rightarrow	H(1,0)&=& - \frac{1}{4}\sum k^2 p_k + \frac{1-\e^{-2 \beta}}{8 \expec[D]} \left(\sum p_kk^2\right)^2\nn\\
	& > & -\frac{1}{4}\E[D^2]  + \frac{2 \E[D]}{8\E[D^2] \E[D]} \E[D^2]^2 = 0, 
	\end{eqnarray}
since, for $\beta>\betacqud$,
	$$
	\e^{-2 \beta} <\e^{-2 \betacqud} = 1 - \frac{2}{\E[D^\star]} = 1- \frac{2 \E[D]}{\E[D^2]}.
	$$
This implies that there exist positive constants $\theta$ and $\delta$, such that $H(w,B)>0$ for all $1-\theta \leq w\leq  1$ and $0 \leq  B \leq \delta$. Therefore, if $w^\star \in (1-\theta, 1)$ is a solution to \eqref{v-def}, then $G'((s_k(w^\star,B)) =0$ and $G''((s_k(w^\star,B))>0$. Hence, $(s_k(w^\star,B))_{k \in {\mathcal S}_r}$ is not a maximizer. Combining this with \eqref{kmg}, we get \eqref{mxw}.

The above implies that if $\mo{0 < B\leq \delta}$, for any maximizer $(s_k(w^\star, B))_{k \in {\mathcal S}_r}$, one has 
	\begin{eqnarray}
	 \sum_{\col{k \in {\mathcal S}_r}}p_ks_k(w^\star)&=&\sum_{\col{k \in {\mathcal S}_r}}p_k\frac{a}{a+(w^\star)^{\mo{k}}}= \E\left(\frac{a}{a+(w^\star)^{\sss \rm D}}\right)\nn\\
 	&\geq& \E\left(\frac{1}{1+(1-\theta)^{\sss \rm D}}\right) =\frac{1}{2}+\eta,
	\end{eqnarray}
for some $\eta >0$. This implies in particular that
    \eqn{
    	\label{mget}
    \kM_G \subseteq  \Big \{(s_k)_{\col{k \in {\mathcal S}_r}} \colon \sum_{\col{k \in {\mathcal S}_r}} p_ks_k \geq  \tfrac{1}{2} +\eta \Big \}.
    }
 On the other hand, we observe that with $\sigma_+=\{v\colon \sigma_v=+1\}$,
	\begin{eqnarray} \label{mnet}
	\mo{M_n^{\rm \sss an,d}}(\beta,B)&:=& \sum_{\sigma} \Big(\frac{2|\sigma _+|}{n}-1\Big) \col{\mu_n^{\rm\sss{an,d}}}(\sigma)\notag \\
	&\geq& \eta \mu_n^{\rm\sss {an, d}} \left(\sigma\colon |\sigma_+| > n(1+\eta)/2\right)-\mu_n^{\rm\sss{an,d}} \left(\sigma\colon |\sigma_+| \leq n(1+\eta)/2\right)\nn\\
	&=&\eta-(\eta+1)\mu_n^{\rm\sss {an,d}} \left(\sigma\colon |\sigma_+| \leq  n(1+\eta)/2\right).
	\end{eqnarray}
Further,
	\eqn{
	\col{\mu_n^{\rm\sss{an,d}}} \left(\sigma\colon |\sigma_+|\leq  n(1+\eta)/2\right)= \frac{\E[Y_{n}(\beta,B)]}{\E[Z_n(\beta,B)]},
	}
with, as in \eqref{eq-partit-function}, 
	\eqn{
	Y_n(\beta,B)=\sum_{\tfrac{|\sigma_+|}{n}\leq \tfrac{1+\eta}{2}}\exp \Big\{\beta \sum_{(i,j) \in E_n} \sigma_i \sigma_j + B\sum_{i \in [n]}\sigma_i\Big\}.
	}
Using the same argument as for the partition function, it follows that 
	\begin{equation*}
	\frac{1}{n}\log \E[Y_n(\beta,B)]=\max_{(s_k)_{\col{k \in {\mathcal S}_r}} \in \kA_{\eta}} G((s_k)_{\col{k \in {\mathcal S}_r}})+o(1),
	\end{equation*}
where 
    	$$
	\kA_{\eta}= \Big\{(s_k)_{\col{k \in {\mathcal S}_r}} \in \kA \colon  \sum_{\col{k \in {\mathcal S}_r}} p_ks_k \leq \tfrac{1+\eta}{2} \Big \}.
	$$
\mo{We conclude that, uniformly in $B>0$
\be
\label{pluto}
\limsup_{n\to\infty} {M_n^{\rm \sss an,d}}(\beta,B)  \ge \eta - (\eta+1)  \lim_{n\to\infty} \exp\Big[- n \Big({\max_{(s_k)_{{k \in {\mathcal S}_r}} \in \kA} G((s_k)_{k \in {\mathcal S}_r}) - \max_{(s_k)_{{k \in {\mathcal S}_r}} \in \kA_{\eta }} G((s_k)_{{k \in {\mathcal S}_r}})\Big)}\Big]. 
\ee
}
Since $\kA_{\eta }$ is  closed in  $\kA$, and $\kA$ is a compact space, also $\kA_{\eta }$ is a compact set. Hence, on $\kA_{\eta}$ the continuous function $G$ attains the maximal value at some point \col{${\mathbf s}_{\eta} \in \kA_{\eta}$}. By  \eqref{mget}, all maximizers of $G$ are not in $\kA_{\eta }$. Therefore,     
		\begin{equation} 
		\label{gxy}
		\max_{(s_k)_{\col{k \in {\mathcal S}_r}} \in \kA} G((s_k)_{k \in {\mathcal S}_r}) - \max_{(s_k)_{\col{k \in {\mathcal S}_r}} \in \kA_{\eta }} G((s_k)_{\col{k \in {\mathcal S}_r}}) 
		= \max_{(s_k)_{k \in {\mathcal S}_r} \in \kA} G((s_k)_{k \in {\mathcal S}_r}) - G(\col{{\mathbf s}_{\eta}}) =\varepsilon >0,
		\end{equation}
for some $\varepsilon$. 
\mo{
As a consequence, \eqref{pluto} implies that uniformly in $B>0$
\be
\label{pluto3}
\limsup_{n\to\infty} M_n^{\rm \sss an,d}(\beta,B)  \ge \eta >0. 
\ee
Since  $(\psinand(\beta,B))_{n\ge 1}$ is a sequence of convex functions in $B\in \mathbb{R}$ converging to $\varphiand(\beta,B)$,
we have (see \cite[Proposition I.3.2 ]{Simo93})
\be
\label{pluto5}
(D_B^+\varphiand)(\beta,B) \ge \limsup_{n\to\infty} (D_B^+\psinand)(\beta,B)
\ee
where $(D_B^+\varphiand)(\beta,B)$ denotes the right derivative w.r.t. B, i.e.
\be
(D_B^+\varphiand)(\beta,B) := \lim_{h\searrow 0} \frac{\varphiand(\beta,B+h) - \varphiand(\beta,B)}{h}. 
\ee
On the other hand, we have differentiability of the pressure at finite volume $\psinand(\beta,B)$
and thus on the right hand side of \eqref{pluto5} we can replace the right derivative $D_B^+$ with the derivative $D_B$, obtaining
\be
(D_B^+\varphiand)(\beta,B) \ge \limsup_{n\to\infty} (D_B\psinand)(\beta,B) = \limsup_{n\to\infty}  M_n^{\rm \sss an,d}(\beta,B).
\ee
By \eqref{pluto3} we thus have
\be
\label{pluto4}
(D_B^+\varphiand)(\beta,B) \ge \eta >0.
\ee
Now for a real convex function $f$ we have continuity from above of the right derivative (see, e.g., \cite[Theorem 1.1.7]{Horm94})
\be
\lim_{h\to 0+} D^+f(x+h) = D^+f(x).
\ee
As a consequence, taking the limit $B\searrow 0$ in \eqref{pluto4}, we conclude that if  $\beta > \betacqud$ then
\be
M^{\rm \sss an,d}(\beta) >0.
\ee 
This completes the proof of part (b).
}

\qed
\medskip

\noindent
{\it Proof of Theorem \ref{thm-crit-CM-ann-deter}.} The proof follows directly from Lemma \ref{lem-crit-v}.
\qed

\section{Proofs for  i.i.d.\  degrees}
\label{sec-proof-iid}
In this section, we investigate the annealed Ising model with i.i.d.\ degrees. In Section \ref{sec-proof-prop-inf-press-iid}, we prove Proposition \ref{prop-inf-press-iid}. 
In Section \ref{sec-part-func-iid-fin}, we identify the partition function and prove Theorem \ref{thm-pressure-CM-ann-iid} for degree distributions with finite supports.
In Section \ref{sec-part-func-iid-inf}, we extend the analysis to general degree distributions. In Section \ref{sec-crit-value-iid}, we identify an upper bound for the
annealed critical value for configuration model with i.i.d.\ degrees and prove Theorem \ref{thm-crit-CM-ann-iid}. 

\subsection{Finiteness of pressure per particle for \col{i.i.d. degrees}: Proof of Proposition \ref{prop-inf-press-iid}}
\label{sec-proof-prop-inf-press-iid}
We first note that
	\eqan{
	\expec[Z_n(\beta, {B})]&\geq \expec\Big[\exp \Big\{\beta \sum_{(i,j) \in E_n} 1+ B \sum_{i \in [n]} 1\Big\}\Big]
	= \expec\Big[\exp \Big\{\beta |E_n|+ n B\Big\}\Big]\\
	&=\e^{n B}\expec\Big[\e^{\beta\sum_{i\in[n]} D_i/2}\Big].\nn
	}
The latter is finite precisely when $\expec[\e^{\beta D/2}]<\infty$, as required.

Further, we can similarly bound from above
	\eqan{
	\label{Zn-bd-fin}
	\expec[Z_n(\beta, {B})]&\leq 2^n \expec\Big[\exp \Big\{\beta \sum_{(i,j) \in E_n} 1+ |B|\sum_{i \in [n]} 1\Big\}\Big]\\
	&= 2^n \e^{n|B|}\expec\Big[\exp \Big\{\beta |E_n|\Big\}\Big]\nn\\
	&=2^n \e^{n|B|}\expec\Big[\e^{\beta\sum_{i\in[n]} D_i/2}\Big].\nn
	}
Again, this is finite precisely when $\expec[\e^{\beta D/2}]<\infty$. The latter bound also implies that $\limsup_{n\rightarrow \infty} \col{\psinaniid(\beta,B)}<\infty$ when $\expec[\e^{\beta D/2}]<\infty$, which completes the proof.
\qed

\subsection{Identification of the partition function for  i.i.d.\ degrees: Proof of Theorem \ref{thm-pressure-CM-ann-iid} for finite supports}
\label{sec-part-func-iid-fin}
In this section, we prove Theorem \ref{thm-pressure-CM-ann-iid} for degree distributions having a finite support. We will extend the proof to general degree distributions satisfying $\expec[\e^{\beta D/2}]<\infty$ below. Let us consider the annealed Ising model on the configuration model with i.i.d.\ sequence of degrees given by the distribution $\boldsymbol{p}$, i.e., the sequence $(D_i)_{i\in[n]}$ are i.i.d.\ random variables with distribution $\pp(D=i)=p_i$, \cla{with $p_i=0$ if $i>i_0$, for a given $i_0$ (finite support condition).}  For any probability distribution $\boldsymbol{q} =(q_k)_{k\geq 1}$, let $\varphiand(\beta,B;\boldsymbol{q})$ be the annealed pressure of the Ising model on the configuration model with degree distribution $\boldsymbol{q}$.

Let $\boldsymbol{P}^{\sss(n)}$ be the empirical degree distribution, i.e., 
	\eqn{
	P_k^{\sss{(n)}}=\frac{1}{n}\sum_{i\in[n]} \indic{D_i=k}.
	}
Then
	\eqn{
	\E \left[Z_n (\beta,B) \right]= \E \left( \e^{n \psinand(\beta,B;\boldsymbol{P}^{\sss(n)})} \right).
	}
We note that $\psinand(\beta,B;\boldsymbol{p})=\varphiand(\beta,B;\boldsymbol{p})+o(1)$ uniformly in $\boldsymbol{p}$, since $\boldsymbol{p}$ only takes finitely many values.
\color{black} See in particular \eqref{psinand-1}, which gives a uniform bound for $\boldsymbol{p}$ only taking finitely many values, and \eqref{limiting-VP}, which shows that the error in the continuum approximation of the values of $(s_k)_{k\in \mathcal{S}_r}$ over which we maximize is uniform as well. \color{black}
By Varadhan's Lemma,
	\eqn{
	-\lim_{n\rightarrow \infty} \frac{1}{n}\log\E \left[Z_n (\beta,B) \right]= \inf\limits_{\boldsymbol{q}} [H(\boldsymbol{q} \mid \boldsymbol{p}) -  \varphiand(\beta,B;\boldsymbol{q})],
	}
where we define the relative entropy 
	\eqn{
	H(\boldsymbol{q} \mid {\boldsymbol{p}}) =\sum_{i\ge 1} q_i\log{(q_i/ p_i)}
	}
and we use that the functional $\boldsymbol{q}\mapsto \varphiand(\beta,B;\boldsymbol{q})$ is bounded and continuous, \color{black}since, e.g.
	\eqn{
	\varphiand(\beta,B;\boldsymbol{q})-\varphiand(\beta,B;\boldsymbol{q}')
	\leq \sup_{(s_k)_{k \in {\mathcal S}_r}}[G_{\beta,B}((s_k)_{k \in {\mathcal S}_r};\boldsymbol{q})-G_{\beta,B}((s_k)_{k \in {\mathcal S}_r};\boldsymbol{q}')],
	}
where now we also make the dependence on the degree distribution explicit in the notation. Since $\boldsymbol{q}\mapsto G_{\beta,B}((s_k)_{k \in {\mathcal S}_r};\boldsymbol{q})$ is uniformly continuous, the upper continuity of $\boldsymbol{q}\mapsto \varphiand(\beta,B;\boldsymbol{q})$ follows. The lower continuity can be proved in an identical manner.\color{black} We conclude that
	\eqn{
	\label{iid}
	\varphianiid(\beta,B) = \sup\limits_{\boldsymbol{q}} [\varphiand(\beta,B;\boldsymbol{q}) -   H(\boldsymbol{q} \mid \boldsymbol{p}) ].
	}
This identifies the pressure of the annealed Ising model on the configuration model with i.i.d.\ degrees, and thus proves Theorem \ref{thm-pressure-CM-ann-iid} for degree distributions having finite support. 
\qed

\subsection{Identification of the partition function for  i.i.d.\ degrees: Proof of Theorem \ref{thm-pressure-CM-ann-iid} for infinite supports}
\label{sec-part-func-iid-inf}
When $\expec[\e^{\beta D/2}]=\infty$, we know that the pressure per particle is infinite. In this section, we investigate what happens otherwise, by extending the analysis in the previous section to such degree distributions \cla{with infinite support} satisfying $\expec[\e^{\beta D/2}]<\infty$. We proceed by upper and lower bounds.
\medskip

\noindent
{\it Proof of lower bound in Theorem \ref{thm-pressure-CM-ann-iid} for infinite supports.}
We start with the lower bound, which is the easiest. Fix $k$ large, to be determined later on. We use that
	\eqan{
	\E \left[Z_n (\beta,B) \right]&\geq \E \left( \e^{n \psinand(\beta,B;\boldsymbol{P}^{\sss(n)})} \indic{P_l^{\sss(n)}=0~\forall l\geq k}\right)\nn\\
	&=\Big(\sum_{i=1}^{k-1} p_i\Big)^n \E \left[\e^{n \psinand(\beta,B;\boldsymbol{P}^{\sss(n)})}\mid D_i<k~ \forall i\in[n]\right].
	}
Fix $\vep>0$, and use that $\Big(\sum_{i=1}^{k-1} p_i\Big)^n\geq \e^{-\vep n}$  for $k\geq k(\vep)$ sufficiently large. It thus suffices to investigate the conditional expectation.
Here, we use that, conditionally on $D_i<k~ \forall i\in[n]$, the sequence $(D_i)_{i\in[n]}$ is i.i.d.\ with probability mass function $\tilde{p}_i=p_i/\sum_{l=1}^{k-1} p_l$, \cla{$i=1,\ldots, k-1$}. Thus, we can use the analysis in Section \ref{sec-part-func-iid-fin} to arrive at
	\eqn{
	\liminf_{n\rightarrow \infty}\frac{1}{n}\log \E \left[Z_n (\beta,B) \right]
	\geq  \col{\sup\limits_{\boldsymbol{q}} [\varphiand(\beta,B;\boldsymbol{q})-H(\boldsymbol{q} \mid \widetilde{\boldsymbol{p}})]} \cla{-\vep}.
	}
Since
	\eqan{
	H(\boldsymbol{q} \mid \widetilde{\boldsymbol{p}})&=\sum_{i=1}^{k-1} q_i\log{(q_i/\tilde p_i)}
	=\sum_{i=1}^{k-1} q_i\log{(q_i/p_i)} -\log\Big(\sum_{i=1}^{k-1} p_i\Big)\sum_{i=1}^{k-1} q_i\nn\\
	&=\sum_{i=1}^{k-1} q_i\log{(q_i/p_i)}+o(1),
	}
we arrive at
	\eqn{
	\label{iid-inf-lb}
	\liminf_{k\rightarrow \infty} \liminf_{n\rightarrow \infty}\frac{1}{n}\log \E \left[Z_n (\beta,B) \right]
	\geq  \col{\sup\limits_{\boldsymbol{q}} [\varphiand(\beta,B;\boldsymbol{q})-H(\boldsymbol{q} \mid \boldsymbol{p})]-\vep},
	}
with $H(\boldsymbol{q} \mid \boldsymbol{p})=\sum_{i=1}^{\infty} q_i\log{(q_i/p_i)}$ and the supremum is over all probability distributions on $\mathbb{N}$. This establishes the lower bound.
\qed
\medskip

\noindent
{\it Proof of upper bound in Theorem \ref{thm-pressure-CM-ann-iid} for infinite supports.} The upper bound is substantially more involved, and will follow from three lemmas.
We start by giving an alternative representation of the pressure per particle. 

It has been shown in \cite[(7.2) and (7.3)]{Can17a} that
   	\eqan{
   	\label{zncb}
   	\E[Z_n(\beta,B)] = \e^{O(1/n)}\sum_{j=0}^n \binom{n}{j} \e^{B(2j-n)} \E\left(\e^{\ell_n[\beta/2 +F_{\beta}(\ell_j/\ell_n)]}\right),
   	}
where for all $j\leq n$, 
   	\eqn{
	\ell_j=D_1+\cdots+D_j
	}
consists of the partial sums of the degrees. Our first lemma investigates some properties of the function $F_\beta$:

\begin{lemma}[\col{Properties of $F_\beta$}]
\label{labcd}
	Let $a,b,c,d$ be non-negative real numbers satisfying $a\leq b,c \leq d$ and $b-a \leq d-c$. Then
	\eqn{
	\label{fabcd}
	dF_{\beta}(c/d)-bF_{\beta}(a/b) \leq 0.
    }
\end{lemma}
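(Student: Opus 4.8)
The plan is to reduce the two-variable inequality to a one-variable monotonicity statement about the function $t\mapsto tF_\beta(a/t)$ (and, more symmetrically, about $H(x,y):=yF_\beta(x/y)$ viewed along suitable paths), exploiting that $F_\beta$ is symmetric about $1/2$, is concave-looking near the endpoints, and has the explicit derivative formula \eqref{fbp} together with the bound $\e^{-2\beta}\le f_\beta(u)\le 1$ on $(0,\tfrac12)$. First I would record the scaling identity $dF_\beta(c/d)=\int_0^{c\wedge(d-c)}\log f_\beta(u/d)\,du$ after the change of variables $u\mapsto u/d$; since $f_\beta$ is increasing on $(0,\tfrac12)$ and $\log f_\beta\le 0$ there, the integrand $\log f_\beta(u/d)$ is \emph{increasing} in $d$ for each fixed $u$, and the upper limit $c\wedge(d-c)$ is also monotone in the relevant direction. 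This already shows $d\mapsto dF_\beta(c/d)$ behaves monotonically once the ratio is controlled, which is the mechanism behind \eqref{fabcd}.

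Concretely, I would split the argument into two moves connecting $(a,b)$ to $(c,d)$. Because $a\le c$, $b\le d$ and $b-a\le d-c$, one can pass from $(a,b)$ to $(c,d)$ through an intermediate pair, say first increasing the "mass" from $(a,b)$ to $(c,c-a+b)$ keeping the gap $b-a=(c-a+b)-c$ fixed, and then from $(c,c-a+b)$ to $(c,d)$ keeping the numerator $c$ fixed (legitimate since $c-a+b\le d$ by hypothesis). For the first move I would show $t\mapsto (t+ (b-a))F_\beta\big(t/(t+(b-a))\big)$ is monotone by differentiating: using \eqref{fbp} the derivative is a combination of $\log f_\beta$ evaluated at the (symmetrised) argument and a boundary term, and the sign is forced by $\log f_\beta\le 0$ on $(0,\tfrac12)$ together with $F_\beta(t)=F_\beta(1-t)$. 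For the second move, $t\mapsto tF_\beta(c/t)$ for $t\ge c$: differentiating gives $F_\beta(c/t) - (c/t)F_\beta'(c/t)$, and since on the relevant range $c/t\le \tfrac12$ we have $F_\beta'(x)=\log f_\beta(x)\le 0$ while $F_\beta(c/t)\le 0$, I need the combined sign, which I expect to come out $\le 0$, giving $tF_\beta(c/t)$ decreasing in $t$ — hence $dF_\beta(c/d)\le (c-a+b)F_\beta(c/(c-a+b))$ and chaining the two moves yields $dF_\beta(c/d)\le bF_\beta(a/b)$.

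The main obstacle I anticipate is the sign bookkeeping in the first move, where both the integrand and the integration limit change simultaneously: one must verify that $\frac{d}{dt}\big[(t+g)F_\beta(t/(t+g))\big]$ has a definite sign for all $t>0$ and fixed gap $g=b-a\ge 0$, handling separately the region where $t/(t+g)\le\tfrac12$ and where it exceeds $\tfrac12$ (using the symmetry $F_\beta(\cdot)=F_\beta(1-\cdot)$ to reduce the latter to the former). A clean way to sidestep case analysis is to write everything through the integral representation $ (t+g)F_\beta\!\big(\tfrac{t}{t+g}\big)=\int_0^{\,t\wedge g}\log f_\beta\!\big(\tfrac{u}{t+g}\big)du$, differentiate under the integral sign plus a Leibniz boundary term, and observe the boundary term vanishes (the integrand is $\log f_\beta(\tfrac{t\wedge g}{t+g})$, and at $t=g$ one checks it contributes with the right sign) while the remaining integral term is $\le 0$ because $u\mapsto \log f_\beta(u/(t+g))$ is nonpositive and $\partial_t$ of $\log f_\beta(u/(t+g))$ is nonpositive on $(0,\tfrac12)$. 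Assembling these estimates gives \eqref{fabcd}.
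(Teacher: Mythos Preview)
Your two-move decomposition $(a,b)\to(c,c+b-a)\to(c,d)$ is a valid and conceptually clean route, different from the paper's single-step Taylor-type bound. Both approaches ultimately rest on the same ingredients ($F_\beta\le 0$, $F_\beta''\ge 0$, and the sign of $F_\beta(s)-sF_\beta'(s)$), but you factor the inequality into two monotonicity statements for the perspective function $(x,y)\mapsto yF_\beta(x/y)$, whereas the paper first uses the symmetry $F_\beta(t)=F_\beta(1-t)$ to reduce to $a/b,c/d\le\tfrac12$ and then writes $dF_\beta(c/d)-bF_\beta(a/b)=(d-b)F_\beta(c/d)+b(F_\beta(c/d)-F_\beta(a/b))$, bounding the second piece via convexity.

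The one genuine gap is in your second move. You correctly compute $\frac{d}{dt}\big[tF_\beta(c/t)\big]=F_\beta(c/t)-(c/t)F_\beta'(c/t)$, but then note only that the two terms have opposite signs and that you ``expect'' the sum to be $\le 0$. This is precisely the nontrivial point; it does not follow from $\log f_\beta\le 0$ alone. The paper isolates this as the key claim: setting $x(s)=F_\beta(s)-sF_\beta'(s)$, one has $x'(s)=-sF_\beta''(s)\le 0$ and $x(0)=0$, hence $x(s)\le 0$ for all $s\in[0,1]$ (no restriction to $s\le\tfrac12$ is needed, so your worry about ``the relevant range $c/t\le\tfrac12$'' is moot). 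Alternatively, your own integral trick closes this gap just as well: $tF_\beta(c/t)=\int_0^{c\wedge(t-c)}\log f_\beta(v/t)\,dv$, and differentiating in $t$ gives a Leibniz boundary term $\log f_\beta((t-c)/t)\le 0$ (present only when $t<2c$) plus an integral of $\partial_t\log f_\beta(v/t)=\tfrac{f_\beta'}{f_\beta}\cdot(-v/t^2)\le 0$. Either way the second move goes through.

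Two minor corrections to your first-move discussion: the boundary term in the Leibniz formula does \emph{not} vanish when $t<g$; it equals $\log f_\beta(t/(t+g))$, which is nonpositive and therefore still has the right sign. And your earlier sentence that ``the sign is forced by $\log f_\beta\le 0$ together with symmetry'' is too quick for the same reason as above: the direct derivative is $F_\beta(s)+(1-s)F_\beta'(s)$ with $s=t/(t+g)$, which equals $x(1-s)$ and again needs the convexity argument (or the integral representation) to conclude.
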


\begin{proof}
Recall that $F_{\beta}$ satisfies
    \eqan{
    F_{\beta}(t)&=F_{\beta}(1-t) \leq 0 \,\, \forall t \in [0,1], \label{sfbt} \\
    F_{\beta}'(t) &\leq 0 \,\, \forall t \in [0,\tfrac{1}{2}], \qquad  F_{\beta}''(t) \geq 0 \,\, \forall t \in [0,1].	 
    \label{cfbt}
    }
We  notice  that it is sufficient to prove the lemma for the case $\tfrac{a}{b}, \tfrac{c}{d} \leq \tfrac{1}{2}$. Indeed, let $\bar{a}=b-a$ and $\bar{c}=d-c$. Then $\bar{a} \leq \bar{c}$, and thus  $(a\wedge \bar{a}) \leq b, (c\wedge \bar{c}) \leq d$. By \eqref{sfbt}, $F(a/b)=F((a\wedge \bar{a})/b)$  and $F(c/d)=F((c\wedge \bar{c})/d)$. Hence, we only need to prove \eqref{fabcd} for the case that $\tfrac{a}{b}, \tfrac{c}{d} \leq \tfrac{1}{2}$.

Assume that $\tfrac{a}{b}, \tfrac{c}{d} \leq \tfrac{1}{2}$. We show  that for all $t\in [0,1]$,
   \eqn{
   	\label{xlo}
	x(t)=F_{\beta}(t)-tF_{\beta}'(t) \leq 0.	
    }
Indeed, using \eqref{sfbt} and \eqref{cfbt}, $x'(t)=-tF_{\beta}''(t)\leq 0$ for all $t\in[0,1]$. Thus $x(t) \leq x(0)=0$ for all $t\in [0,1]$. 

Since $F_{\beta}''(t) \geq 0$, by a Taylor expansion, we obtain that $F_{\beta}(x)-F_{\beta}(y) \leq F_{\beta}'(x)(x-y)$ for all $0\leq x, y \leq 1$. Hence, 
    \eqan{
    dF_{\beta}(c/d) - bF_{\beta}(a/b) &= (d-b) F_{\beta}(c/d) + b(F_{\beta}(c/d)-F_{\beta}(a/b)) \nn \\
    &\leq (d-b) F_{\beta}(c/d) + bF_{\beta}'(c/d) \left(\frac{c}{d}-\frac{a}{b}\right).
    }
Observe that, since $a\leq c$, 
   \eqan{
   b\left(\frac{c}{d}-\frac{a}{b}\right)&=\frac{cb-ad}{d} \geq \frac{cb -cd}{d}=-(d-b)\frac{c}{d}.
   }
Further $F_{\beta}'(c/d) \leq 0$, since $c/d \leq 1/2$. Hence, using the last two inequalities, we get 
   \eqan{
   dF_{\beta}(c/d) - bF_{\beta}(a/b) \leq (d-b) \Big (F_{\beta}(c/d) -(c/d)F_{\beta}'(c/d) \Big ) \leq 0,
   }
by \eqref{xlo}, as required.
\end{proof}
\medskip

We use Lemma \ref{labcd} to truncate the degree distribution, as we explain now. For any $k\geq 1$, we consider a truncated sequence $(D_i^{\sss(k)})_{i\geq 1}$ defined by $D_i^{\sss(k)}=D_i\wedge k$.  We define correspondingly the partition function $Z^{\sss(k)}_n(\beta,B)$ and $\ell_j^{\sss(k)}=D_1^{\sss(k)} + \cdots +D_j^{\sss(k)}$ for $j\leq n$. Then we also have 
  	\eqan{
  	\E[Z_n^{\sss(k)}(\beta,B)] \col{=} \e^{\kO(1/n)}\sum_{j=0}^n \binom{n}{j} \e^{B(2j-n)} \E\left(\e^{\ell_n^{\sss(k)}[\beta/2 +F_{\beta}(\ell_j^{\sss(k)}/\ell_n^{\sss(k)})]}\right).
  	}
Moreover, it is clear that for all $j\leq n$, 
  	$$
   	\ell_j^{\sss(k)} \leq  \ell_n^{\sss(k)}, \ell_j \leq \ell_n, \quad \textrm{and} \quad \ell_n^{\sss(k)} - \ell_j^{\sss(k)} =\sum_{i=j+1}^n (D_i \wedge k ) \leq \sum_{i=j+1}^n D_i = \ell_n-\ell_j. 
  	$$
Therefore, applying Lemma \ref{labcd}, we obtain that, for all $j\leq n$,
\cla{	$$
  	\ell_nF_{\beta}(\ell_j/\ell_n)  \leq   \ell_n^{\sss(k)}F_{\beta}(\ell_j^{\sss(k)}/\ell_n^{\sss(k)}),
  	$$}
which implies, using \eqref{zncb}, that 
  	\eqan{
  	\E[Z_n(\beta,B)] \leq \e^{O(1/n)}\sum_{j=0}^n \binom{n}{j} \e^{B(2j-n)} \E\left(\e^{\ell_n^{\sss(k)}[\beta/2 +F_{\beta}(\ell_j^{\sss(k)}/\ell_n^{\sss(k)})]} 
	\e^{\beta(\ell_n-\ell_n^{\sss(k)})/2}\right).
  	}

Define
	\eqn{
	N_{\sss \geq k}=\sum_{i\in[n]} \indic{D_i\geq k}.
	}
Then, by conditioning on $N_{\sss \geq k}$, we obtain
	  \eqan{
  	\E[Z_n(\beta,B)] \leq \e^{O(1/n)}\sum_{m,j=0}^n \binom{n}{j} \e^{B(2j-n)} \E\left[\e^{\ell_n^{\sss(k)}[\beta/2 +F_{\beta}(\ell_j^{\sss(k)}/\ell_n^{\sss(k)})]} 
	\e^{\beta(\ell_n-\ell_n^{\sss(k)})/2}\mid N_{\sss \geq k}=m\right]\prob(N_{\sss \geq k}=m).
  	}
Conditionally on $N_{\sss \geq k}=m$, the vectors $(D_i\wedge k)_{i\in [n]}$ and $\ell_n-\ell_n^{\sss(k)}=\sum_{i\in [n]} (D_i-k)\indic{D_i\geq k}$ are independent, so that
	 \eqan{
  	\E[Z_n(\beta,B)] &\leq \e^{O(1/n)}\sum_{m,j=0}^n \binom{n}{j} \e^{B(2j-n)} \E\left[\e^{\ell_n^{\sss(k)}[\beta/2 +F_{\beta}(\ell_j^{\sss(k)}/\ell_n^{\sss(k)})]} \mid N_{\sss \geq k}=m\right]\nn\\
	&\qquad\qquad\qquad\times \E\left[\e^{\beta(\ell_n-\ell_n^{\sss(k)})/2}\mid N_{\sss \geq k}=m\right]\prob(N_{\sss \geq k}=m).
  	}
We compute
	\eqn{
	\E\left[\e^{\beta(\ell_n-\ell_n^{\sss(k)})/2}\mid N_{\sss \geq k}=m\right]=\E[\e^{\beta (D-k)/2}\mid D\geq k]^m.
	}
In the following lemma, we investigate the other conditional expectation:

\begin{lemma}[Conditional pressure per particle]
\label{lem-cond-pres}
For any $q_{\sss \geq k}\in [0,1]$ and $k\geq 1$,
	\eqan{
	&\lim_{n\rightarrow \infty}\frac{1}{n} \log \sum_{j=0}^n \binom{n}{j} \e^{B(2j-n)}  
	\E\left[\e^{\ell_n^{\sss(k)}[\beta/2 +F_{\beta}(\ell_j^{\sss(k)}/\ell_n^{\sss(k)})]} \mid N_{\sss \geq k}=\lceil n q_{\sss \geq k}\rceil\right]\nn\\
	&\qquad=\sup_{\widetilde{\boldsymbol{q}}} \big[\varphiand(\beta,B; \widetilde{\boldsymbol{q}})-\widetilde{H}(\widetilde{\boldsymbol{q}}\mid \widetilde{\boldsymbol{p}})\big],
	}
where $\widetilde{\boldsymbol{p}}=(p_1, \ldots, p_{k-1}, p_{\sss \geq k})$ and $\widetilde{\boldsymbol{q}}=(q_1, \ldots, q_{k-1}, q_{\sss \geq k})$, and
	\eqn{
	\widetilde{H}(\widetilde{\boldsymbol{q}}\mid \widetilde{\boldsymbol{p}})
	=\sum_{i=1}^{k-1} q_i\log(q_i/p_i) -(1-q_{\sss \geq k})\log\big((1-q_{\sss \geq k})/(1-p_{\sss \geq k})\big).
	}
\end{lemma}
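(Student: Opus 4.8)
The plan is to follow exactly the same route used in Section~\ref{sec-part-func-iid-fin}, but now conditioned on the event $\{N_{\sss\geq k}=\lceil nq_{\sss\geq k}\rceil\}$, so that the degrees $(D_i\wedge k)_{i\in[n]}$ become i.i.d.\ on the finite set $\{1,\dots,k\}$ with a law that is a (slightly perturbed) truncation of $\boldsymbol p$. First I would observe that, conditionally on $N_{\sss\geq k}=m:=\lceil nq_{\sss\geq k}\rceil$, the truncated degrees $D_i^{\sss(k)}=D_i\wedge k$ are exchangeable: $m$ of them equal $k$, and the remaining $n-m$ are i.i.d.\ with law $\bar p_i=p_i/(1-p_{\sss\geq k})$ for $i=1,\dots,k-1$. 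Equivalently, the empirical distribution $\boldsymbol P^{\sss(n),k}$ of $(D_i^{\sss(k)})_{i\in[n]}$ satisfies $P^{\sss(n),k}_k\to q_{\sss\geq k}$ and, on the complement, concentrates (by the law of large numbers under the conditional law) around $(1-q_{\sss\geq k})\bar p_i$. The sum over $j$ together with the factor $\binom nj\e^{B(2j-n)}$ and $\E[\e^{\ell_n^{\sss(k)}[\beta/2+F_\beta(\ell_j^{\sss(k)}/\ell_n^{\sss(k)})]}\mid\cdots]$ is, up to $\e^{o(n)}$ errors, precisely $\e^{n\psinand(\beta,B;\boldsymbol P^{\sss(n),k})}$ — this is the content of the rewriting \eqref{nezn}–\eqref{limiting-VP} applied to the finite-support distribution $\boldsymbol P^{\sss(n),k}$, where the uniformity of all the $\e^{o(n)}$ corrections is guaranteed because the support has size $k<\infty$ (cf.\ the remark after \eqref{iid} in Section~\ref{sec-part-func-iid-fin}).

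Next I would apply Varadhan's lemma (equivalently Sanov's theorem combined with the continuity and boundedness of $\boldsymbol q\mapsto\varphiand(\beta,B;\boldsymbol q)$ established just before \eqref{iid}) to the conditional law of $\boldsymbol P^{\sss(n),k}$. The key point is to identify the correct rate function. Under the conditional measure $\prob(\,\cdot\mid N_{\sss\geq k}=m)$ with $m/n\to q_{\sss\geq k}$, the empirical distribution of $(D_i^{\sss(k)})_{i\in[n]}$ satisfies a large deviation principle at speed $n$: its $k$-th coordinate is pinned at $q_{\sss\geq k}$ with zero cost, and the conditional distribution of the remaining $n-m\sim n(1-q_{\sss\geq k})$ i.i.d.\ coordinates (which are i.i.d.\ with law $\bar p$) produces, by Sanov, a cost $(1-q_{\sss\geq k})\sum_{i=1}^{k-1}\frac{q_i}{1-q_{\sss\geq k}}\log\frac{q_i/(1-q_{\sss\geq k})}{\bar p_i}=\sum_{i=1}^{k-1}q_i\log(q_i/p_i)-(1-q_{\sss\geq k})\log\big((1-q_{\sss\geq k})/(1-p_{\sss\geq k})\big)$, which is exactly $\widetilde H(\widetilde{\boldsymbol q}\mid\widetilde{\boldsymbol p})$ in the statement. (Here one parametrises a candidate limiting empirical law of the full truncated sequence by $\widetilde{\boldsymbol q}=(q_1,\dots,q_{k-1},q_{\sss\geq k})$ with $\sum_{i<k}q_i=1-q_{\sss\geq k}$, and the associated CM-pressure is $\varphiand(\beta,B;\widetilde{\boldsymbol q})$ with $\widetilde{\boldsymbol q}$ viewed as a degree distribution on $\{1,\dots,k\}$ where mass $q_{\sss\geq k}$ sits at $k$.) Combining this LDP with Varadhan's lemma yields
\[
\lim_{n\to\infty}\frac1n\log\!\Big(\sum_{j=0}^n\binom nj\e^{B(2j-n)}\E\big[\e^{\ell_n^{\sss(k)}[\beta/2+F_\beta(\ell_j^{\sss(k)}/\ell_n^{\sss(k)})]}\mid N_{\sss\geq k}=\lceil nq_{\sss\geq k}\rceil\big]\Big)=\sup_{\widetilde{\boldsymbol q}}\big[\varphiand(\beta,B;\widetilde{\boldsymbol q})-\widetilde H(\widetilde{\boldsymbol q}\mid\widetilde{\boldsymbol p})\big],
\]
as claimed.

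The main obstacle I anticipate is not the conditioning but making the $\e^{o(n)}$ error bounds uniform enough so that Varadhan applies cleanly: one needs $\psinand(\beta,B;\boldsymbol P^{\sss(n),k})=\varphiand(\beta,B;\boldsymbol P^{\sss(n),k})+o(1)$ \emph{uniformly over all empirical distributions on $\{1,\dots,k\}$}, and one needs $\boldsymbol q\mapsto\varphiand(\beta,B;\boldsymbol q)$ to be continuous on the (compact) simplex of probability measures on $\{1,\dots,k\}$ including near the boundary where some $q_i\to0$ or $r=\dmin$ jumps — the latter is handled by the continuity argument already given around \eqref{iid}, and the former by inspecting \eqref{psinand-1}–\eqref{limiting-VP}, where the Stirling and $\dmax$ corrections are $O((k\log n)/n)$ uniformly since the support size is the fixed constant $k$. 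A minor technical point is that the conditional law of the non-truncated coordinates is i.i.d.\ only after conditioning, and that $m=\lceil nq_{\sss\geq k}\rceil$ rather than exactly $nq_{\sss\geq k}$; both are absorbed into $o(n)$ corrections. Once these uniformity statements are in place, the LDP/Varadhan step is routine and the identification of $\widetilde H$ is a direct Sanov computation.
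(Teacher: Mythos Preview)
Your proposal is correct and follows essentially the same approach as the paper: both recognize that, conditionally on $N_{\sss\geq k}=\lceil nq_{\sss\geq k}\rceil$, the truncated degrees $(D_i\wedge k)_{i\in[n]}$ consist of $\lceil nq_{\sss\geq k}\rceil$ values equal to $k$ and the rest i.i.d.\ with law $p_i/(1-p_{\sss\geq k})$, and then apply Varadhan's lemma exactly as in the finite-support case of Section~\ref{sec-part-func-iid-fin}. The only cosmetic difference is in how $\widetilde H$ is derived: you compute it directly via Sanov for the $n-m$ free coordinates, whereas the paper obtains it as the difference of the unconditional multinomial rate function and the binomial rate function for $N_{\sss\geq k}$; both computations yield the same expression.
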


\proof This follows immediately from Varadhan's lemma, as in \eqref{iid}, noting that the vector $(D_i\wedge k)_{i\in[n]}$ satisfies a conditional large deviation principle with rate function $\widetilde{H}(\widetilde{\boldsymbol{q}}\mid \widetilde{\boldsymbol{p}})$. Indeed, conditionally on  $N_{\sss \geq k}=\lceil n q_{\sss \geq k}\rceil$, this sequence consists of $q_{\sss \geq k}n$ values $k$, and the remaining $(1-q_{\sss \geq k})n$ values are i.i.d.\ with distribution $\widetilde{\boldsymbol{p}}$. The rate function of this sequence can be computed using the ratio of probabilities as
	\eqan{
	\widetilde{H}(\widetilde{\boldsymbol{q}}\mid \widetilde{\boldsymbol{p}})
	&=-\lim_{n\rightarrow \infty} \frac{1}{n} \log\prob(N_1=q_1 n, \ldots, N_{k-1}=q_{k-1} n, N_{\sss \geq k}=q_{\sss \geq k}n)
	+\lim_{n\rightarrow \infty} \frac{1}{n} \log \prob(N_{\sss \geq k}=q_{\sss \geq k}n)\nn\\
	&=\Big[\sum_{i=1}^{k-1} q_i\log(q_i/p_i)+q_{\sss \geq k}\log(q_{\sss \geq k}/p_{\sss \geq k})\Big]\nn\\
	&\qquad \qquad-\Big[q_{\sss \geq k}\log(q_{\sss \geq k}/p_{\sss \geq k})
	+(1-q_{\sss \geq k})\log\big((1-q_{\sss \geq k})/(1-p_{\sss \geq k})\big)\Big],
	}
and the $q_{\sss \geq k}\log(q_{\sss \geq k}/p_{\sss \geq k})$ terms cancel, as required.	
\qed
\medskip

We conclude that
	\eqan{
  	\limsup_{n\rightarrow \infty} \frac{1}{n}\log\E[Z_n(\beta,B)] &\leq \sup_{\widetilde{\boldsymbol{q}}}\col{\Big[
	\varphiand(\beta,B; \widetilde{\boldsymbol{q}})-\widetilde H(\widetilde{\boldsymbol{q}}\mid \widetilde{\boldsymbol{p}})+q_{\sss \geq k}\log \E[\e^{\beta (D-k)/2}\mid D\geq k]}\nn\\
	&\col{\qquad\qquad -q_{\sss \geq k}\log(q_{\sss \geq k}/p_{\sss \geq k})-(1-q_{\sss \geq k})\log((1-q_{\sss \geq k})/(1-p_{\sss \geq k}))\Big]}\nn\\
	&= \sup_{\widetilde{\boldsymbol{q}}}\big[
	\varphiand(\beta,B; \widetilde{\boldsymbol{q}})-\sum_{i=1}^{k-1} q_i\log(p_i/q_i)-q_{\sss \geq k}\log(q_{\sss \geq k})\nn\\
	&\qquad\qquad+q_{\sss \geq k}\log \E[\e^{\beta (D-k)/2}\indic{D\geq k}]\big].
  	}
When $k$ is sufficiently large $\E[\e^{\beta (D-k)/2}\indic{D\geq k}]\leq 1$ by dominated convergence, so that we arrive at
	\eqan{
  	\cla{\limsup_{n\rightarrow \infty}} \frac{1}{n}\log\E[Z_n(\beta,B)] 
	&\leq \sup_{\widetilde{\boldsymbol{q}}}\big[\varphiand(\beta,B; \widetilde{\boldsymbol{q}})-\sum_{i=1}^{k-1} q_i\log(p_i/q_i)-q_{\sss \geq k}\log(q_{\sss \geq k})\big].
  	}
We aim to let $k\rightarrow \infty$ in the above expression. We see that, as \col{$k\to\infty$},
	\eqn{
	\sum_{i=1}^{k-1} q_i\log(p_i/q_i)-q_{\sss \geq k}\log(q_{\sss \geq k})\rightarrow \sum_{i=1}^{\infty} q_i\log(p_i/q_i)
	}
for every proper probability mass function $\boldsymbol{q}$. What remains to prove is that the optimal $\boldsymbol{q}$ does not put mass in infinity:

\begin{lemma}[Optimizer does not put mass at infinity]
For any $\vep>0$, 
	\eqn{
	\liminf_{k\rightarrow \infty}\liminf_{n\rightarrow \infty} \frac{1}{n}\log\E[Z_n(\beta,B)\indic{\sum_{i\in[n]} \indic{D_i\geq k}\geq \vep n}] =-\infty.
	}
\end{lemma}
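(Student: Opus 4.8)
The plan is to discard the Ising structure entirely and rely only on the deterministic bound underlying \eqref{Zn-bd-fin}: pointwise in the random graph one has $Z_n(\beta,B)\le 2^n\e^{n|B|}\e^{\beta|E_n|}$ with $|E_n|=\tfrac12\sum_{i\in[n]}D_i$. Since the event $\{\sum_{i\in[n]}\indic{D_i\ge k}\ge \vep n\}$ depends only on the degrees, its indicator passes through the expectation for free, so that, writing $N_{\sss \geq k}=\sum_{i\in[n]}\indic{D_i\ge k}$,
\[
\E\big[Z_n(\beta,B)\indic{N_{\sss \geq k}\ge \vep n}\big]\le 2^n\e^{n|B|}\,\E\big[\e^{\beta\sum_{i\in[n]}D_i/2}\indic{N_{\sss \geq k}\ge \vep n}\big].
\]
Thus it suffices to prove that $\tfrac1n\log\E[\e^{\beta\sum_iD_i/2}\indic{N_{\sss \geq k}\ge \vep n}]$, after taking $\liminf$ in $n$ and then in $k$, equals $-\infty$.

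Next I would exploit the i.i.d.\ structure of the degrees. Conditioning on the value of $N_{\sss \geq k}$ and using independence yields $\E[\e^{\beta\sum_iD_i/2}\indic{N_{\sss \geq k}=m}]=\binom{n}{m}a_k^m b_k^{\,n-m}$, where $a_k=\E[\e^{\beta D/2}\indic{D\ge k}]$ and $b_k=\E[\e^{\beta D/2}\indic{D<k}]\le A:=\E[\e^{\beta D/2}]$, which is finite by the standing assumption of this section. Summing over $m\ge\vep n$, bounding $\binom{n}{m}\le 2^n$ and $b_k^{\,n-m}\le\max(1,A)^n$, and using $a_k^m\le a_k^{\vep n}$ on this range (valid once $k$ is large enough that $a_k\le 1$), I obtain
\[
\frac1n\log\E\big[\e^{\beta\sum_iD_i/2}\indic{N_{\sss \geq k}\ge \vep n}\big]\le \frac{\log (n+1)}{n}+\log 2+\log\max(1,A)+\vep\log a_k.
\]
Equivalently, one can bypass the binomial bookkeeping with a Chernoff estimate $\indic{N_{\sss \geq k}\ge \vep n}\le \e^{\lambda(N_{\sss \geq k}-\vep n)}$, factorize over $i$ to get $\e^{-\lambda\vep n}(A+(\e^{\lambda}-1)a_k)^n$, and take $\lambda=\tfrac12\log(1/a_k)$; this gives the same bound up to constants.

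Finally, letting $n\to\infty$ kills the $\log (n+1)/n$ term, and letting $k\to\infty$ uses the one genuinely nontrivial input: $a_k=\E[\e^{\beta D/2}\indic{D\ge k}]\to 0$ by dominated convergence, once more because $\E[\e^{\beta D/2}]<\infty$. Hence $\vep\log a_k\to-\infty$, and feeding this back into the two displays above yields the claim. I do not expect a real obstacle here; the only points requiring care are that the bound on $Z_n$ is genuinely pointwise in the graph (so the degree-event indicator does not interact with the sum over spin configurations) and that the standing integrability hypothesis $\E[\e^{\beta D/2}]<\infty$ is precisely what keeps $b_k$ bounded while forcing $a_k\to0$.
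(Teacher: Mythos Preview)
Your proof is correct and follows essentially the same route as the paper: bound $Z_n$ pointwise by $2^n\e^{n|B|}\e^{\beta\sum_i D_i/2}$, condition on $N_{\sss\ge k}=m$, factorize using the i.i.d.\ structure to obtain $\binom{n}{m}a_k^m b_k^{n-m}$, and conclude from $a_k\to 0$. The only cosmetic difference is the final tail estimate: the paper rewrites the sum as $\expec[\e^{\beta D/2}]^n\,\prob({\sf Bin}(n,r_k)\ge \vep n)$ with $r_k=a_k/\expec[\e^{\beta D/2}]$ and invokes the exact binomial large-deviation rate, whereas you use the cruder bounds $\binom{n}{m}\le 2^n$ and $a_k^m\le a_k^{\vep n}$; both yield $-\infty$ in the double limit.
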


\proof We bound as in \eqref{Zn-bd-fin},
	\eqan{
	\E\Big[Z_n(\beta,B) \indic{\sum_{i\in[n]} \indic{D_i\geq k}\geq \vep n}\Big]
	&\leq 2^n \e^{n|B|}\expec\Big[\e^{\beta\sum_{i\in[n]} D_i/2}\indic{\sum_{i\in[n]} \indic{D_i\geq k}\geq \vep n}\Big]\nn\\
	&=2^n \e^{n|B|} \sum_{m\geq \vep n} {\binom{n}{m}} p_{\sss \geq k}^m (1-p_{\sss \geq k})^{n-m}
	\expec\Big[\e^{\beta\sum_{i\in[n]} D_i/2}\mid N_{\sss \geq k}=m\Big].
	}
We compute
	\eqn{
	\expec\Big[\e^{\beta\sum_{i\in[n]} D_i/2}\mid N_{\sss \geq k}=m\Big]=
	\expec\Big[\e^{\beta D/2}\mid D\geq k\Big]^m\expec\Big[\e^{\beta D/2}\mid D<k\Big]^{n-m}.
	}
This yields
	\eqan{
	\E\Big[Z_n(\beta,B) \indic{\sum_{i\in[n]} \indic{D_i\geq k}\geq \vep n}\Big]
	&\leq 2^n \e^{n|B|} \sum_{m\geq \vep n} {\binom{n}{m}} \expec\big[\e^{\beta D/2}\indic{D\geq k}\big]^m\expec\big[\e^{\beta D/2}\indic{D<k}\big]^{n-m}\nn\\
	&=  2^n \e^{n|B|}\expec\big[\e^{\beta D/2}\big]^n \sum_{m\geq \vep n} {\binom{n}{m}} r_k^m (1-r_k)^{n-m},
	}
where
	\eqn{
	r_k=\frac{\expec\big[\e^{\beta D/2}\indic{D\geq k}\big]}{\expec\big[\e^{\beta D/2}\big]}.
	}
Since $\expec\big[\e^{\beta D/2}\big]<\infty$, we have that $\lim_{k\rightarrow \infty} r_k=0$, so that 
	\eqan{
	\limsup_{n\rightarrow \infty}\frac{1}{n}\log\sum_{m\geq \vep n} {\binom{n}{m}} r_k^m (1-r_k)^{n-m}
	&=\limsup_{n\rightarrow \infty}\frac{1}{n}\log\prob({\sf Bin}(n,r_k)\geq \vep n)\nn\\
	&=-\vep \log(\vep/r_k)-(1-\vep)\log((1-\vep)/(1-r_k))\nn\\
	&\rightarrow -\infty,
	}
for any $\vep>0$ as $k\rightarrow \infty.$
\qed
\medskip

We arrive at
	\eqan{
  	\lim_{n\rightarrow \infty} \frac{1}{n}\log\E[Z_n(\beta,B)] 
	&\leq \sup_{\boldsymbol{q}}\big[\varphiand(\beta,B; \boldsymbol{q})-\sum_{i=1}^{\infty} q_i\log(p_i/q_i)\big]\nn\\
	&=\sup_{\boldsymbol{q}}\big[\varphiand(\beta,B; \boldsymbol{q})-H(\boldsymbol{q}\mid \boldsymbol{p})\big].
  	}
This proves the required upper bound.
\qed
\medskip

Together with the corresponding lower bound proved in \eqref{iid-inf-lb}, we arrive at
	\eqn{
	\label{iid-inf}
	\lim_{n\rightarrow \infty}\frac{1}{n}\log \E \left[Z_n (\beta,B) \right]
	=\col{\sup\limits_{\boldsymbol{q}} [\varphiand(\beta,B;\boldsymbol{q})-H(\boldsymbol{q} \mid \boldsymbol{p})]},
	}
which establishes the formula for the annealed pressure for i.i.d.\ degrees having infinite support.
\qed

\subsection{Annealed critical value for configuration model with i.i.d.\ degrees: Proof of Theorem \ref{thm-crit-CM-ann-iid}}
\label{sec-crit-value-iid}
We first notice that 
	\eqn{ \label{pher}
	\varphianiid(\beta,B) = \sup\limits_{w, \boldsymbol{q}} R_{\beta,B}(w, \boldsymbol{q}),
	}
where
	\eqn{
	\label{Rwq}
	R_{\beta, B}(w,  \boldsymbol{q}) = -H( \boldsymbol{q}\mid  \boldsymbol{p}) + r_{\beta,B}(w, \bq)
	}
and
 	\eqn{  
	\label{rwq}
	r_{\beta,B}(w, \bq) = \frac{\beta \E[D( \boldsymbol{q})]}{2}+ G_{\beta, B}((s_k(w,B))_{k\geq 1};  \boldsymbol{q}).
	}
Here we explicitly write the parameters $(\beta, B, \bq)$ of all functions involved. We claim that $R_{\beta, B}(\cdot)$ is a continuous function in a compact space $[0,1] \times \kP$, with $\kP$ the space of discrete probability measures. Thus, there exists a non-empty set of maximizers. 

In the proof below, we fix $\beta < \betacaniid$.  For $B>0$, define 
	\eqn{
	\bar{w}(B) = \sup \{w^{\star}\colon \textrm{ there exists $\bq^{\star}$ such that $(w^{\star}, \bq^{\star})$ is a maximizer of $R_{\beta, B}(\cdot)$} \}.
	}
The proof consists of four steps.
\medskip

\paragraph{\bf Step 1: An optimizer $w^\star(B)$ that converges to 1.} We first show that $\bar{w}(0^+) = \limsup_{B \rightarrow 0^+} \bar{w}(B)=1$ by contradiction. Assume that $\bar{w}(0^+) <1$. Then there exists $\delta, \varepsilon >0$, such that  when $0<B<\delta$, we have $w^{\star}=w^{\star}(B)< 1 -\varepsilon$ for all  maximizers $(w^{\star}, \bq^{\star})$ of $R_{\beta, B}$. Since $[0,1] \times \kP$ is compact, as $B\searrow 0$, we can take a subsequence of $(w^{\star}, \bq^{\star})$ converging to a limit $(\tilde{w}, \tilde{\bq})$ with $\tilde{w}\leq 1 -\varepsilon$. Therefore, 
	\eqn{
	\varphianiid(\beta, 0^+)= R_{\beta, 0}(\tilde{w}, \tilde{\bq}) = -H( \tilde{\bq}\mid  \boldsymbol{p}) + \frac{\beta \E[D( \tilde{\bq})]}{2}+ G_{\beta, 0}((s_k(\tilde{w}, \mo{0}))_{k\geq 1};  \tilde{\bq}).
	}
On the other hand,
	\eqn{
	\varphianiid(\beta, B) \geq  -H( \tilde{\bq}\mid  \boldsymbol{p}) + \frac{\beta \E[D( \tilde{\bq})]}{2}+ G_{\beta, B}((s_k(\tilde{w},B))_{k\geq 1};  \tilde{\bq}).
	}
Thus,
	\begin{eqnarray*}
	\mo{\Msaniid(\beta)}&=&
         \mo{\lim_{B\searrow 0}}\frac{\varphianiid (\beta,B) - \varphianiid (\beta, 0)}{B}\\
 	&\mo{\geq} &\lim_{B\searrow 0} \frac{G_{\beta, B}((s_k(\tilde{w},B))_{k\geq 1};  \tilde{\bq}) - G_{\beta, 0}((s_k(\tilde{w},0))_{k\geq 1};  \tilde{\bq})}{B}\\
	&=&\lim_{B\searrow 0} \frac{G_{\beta, B}((s_k(\tilde{w},B))_{k\geq 1};  \tilde{\bq}) - G_{\beta, B}((s_k(\tilde{w},0))_{k\geq 1};  \tilde{\bq})}{B}\\
	&&\qquad+\lim_{B\searrow 0} \frac{G_{\beta, B}((s_k(\tilde{w},0))_{k\geq 1};  \tilde{\bq}) - G_{\beta, 0}((s_k(\tilde{w},0))_{k\geq 1};  \tilde{\bq})}{B}.
	\end{eqnarray*}
The first term converges to
	\eqn{
	\sum_{k\geq 1} \frac{\partial}{\partial s_k} G_{\beta, 0}((s_k(\tilde{w},B))_{k\geq 1})\big|_{B=0} \frac{\partial}{\partial B} s_k(\tilde{w},B)=0,
	}
since, by assumption, $(s_k(\tilde{w},0))_{k\geq 1}$ is a stationary point of $G_{\beta, 0}((s_k)_{k\geq 1})$. The second term converges to $\sum_{k\geq 1} \tilde{q}_k (2s_k(\tilde{w},0) -1)$, so that
	\begin{eqnarray*}
	\mo{\Msaniid(\beta)} &\mo{\ge}&\sum_{k\geq 1} \tilde{q}_k (2s_k(\tilde{w},0) -1)= 2 \sum_{k\geq 1} \frac{ \tilde{q}_k}{1+\tilde{w}^k} -1\\
	&=&\E \left[\frac{2}{1+w^{\tilde{D}}}\right] -1=\E \left[\frac{1-w^{\tilde{D}}}{1+w^{\tilde{D}}}\right]\\
	&\geq&\E \left[\frac{1-(1-\varepsilon)^{\tilde{D}}}{1+(1-\varepsilon)^{\tilde{D}}}\right] >0.
	\end{eqnarray*}
Here we have used that $\tilde{w} \leq 1-\varepsilon$ and $\tilde{D}$- the random variable with law $\tilde{\bq}$ is not $0$ almost surely. By \eqref{def_beta_c} applied to $\Maniid(\beta)$ and $\betacaniid$, this contradicts the assumption that $\beta < \betacaniid$.
\medskip

\paragraph{\bf Step 2: A Lagrange multiplier equation.} 
We have now proved that $\bar{w}(0^+)=1$, and thus we can assume that $w^{\star}(B) \rightarrow 1$ as $B \searrow 0$ for one sequence of maximizers $(w^{\star}(B))_{B\geq 0}$. Now we analyze the limit of $\bq^{\star}=\bq^{\star}(B)$ as $B \searrow 0$. Since $(w^{\star}(B), \bq^{\star}(B))$ is a maximizer of $R_{\beta,B}$, $(w^{\star}(B), \bq^{\star}(B))$ satisfies
	\eqn{
	\label{iid-VP}
	\frac{\partial R_{\beta,B}(w^{\star}(B), \bq^\star(B))}{\partial q_i} =\lambda,
	}
for some Lagrange multiplier $\lambda$ that is due to the restriction that $\boldsymbol{q}^\star(B)$ is a probability measure, and all $i$ for which $p_i>0$. 
If $\boldsymbol{p}$  only takes finitely many values, the above system indeed provides the critical points of $R_{\beta,B}$. 

The above Lagrange equation is standard when the degree distribution has finite support, but requires some extra arguments when $\boldsymbol{p}$ has infinite support. In this paragraph, we fix $(\beta,B)$ are omit them from the notation, as we aim to show that \eqref{iid-VP} holds for all $(\beta,B)$. We now provide these arguments. First, we prove that the optimizer $\boldsymbol{q}^\star$ satisfies that $q^\star_i>0$ whenever $p_i>0$. For this, we fix $i\geq 1$ for which $p_i>0$ and a $j\geq 1$ for which $q^\star_j>0$. Assume that $q^\star_i=0$, and consider, instead, the probability distribution $\boldsymbol{q}(\vep)=\boldsymbol{q}^\star+\vep \delta_{i}-\vep \delta_{j}$, where $\delta_i$ is the Kronecker-delta on $i\in \N$.  Since $(w^{\star}, \boldsymbol{q}^\star)$ is the maximizer, we must have that 
	\eqn{
	H(\boldsymbol{q}(\vep) \mid \boldsymbol{p}) - r(w^{\star},\bq(\varepsilon)) \leq H(\boldsymbol{q}^\star \mid \boldsymbol{p}) -  r(w^\star,\boldsymbol{q}^\star).
	}
By the identification of $r$, the function the function $q_i \mapsto r(w^{\star},\boldsymbol{q})$ is differentiable. 
However,
	\eqn{
	H(\boldsymbol{q}(\vep) \mid \boldsymbol{p})-H(\boldsymbol{q}^\star \mid \boldsymbol{p}) 
	=\vep \log(\vep/p_i)+(q_j^\star-\vep) \log((q_j^\star-\vep)/p_j)-q_j^\star \log((q_j^\star-\vep)/p_j),
	}
which is $\vep \log(\vep)(1+o(1)),$ and which has a derivative $-\infty$ at $\vep=0$. This contradicts with the fact that $\boldsymbol{q}^\star$ is the maximizer. We conclude that the optimizer $\boldsymbol{q}^\star$ satisfies that $q^\star_i>0$ whenever $p_i>0$. 
\medskip

\medskip
\noindent

We continue by proving that \eqref{iid-VP} holds. Fix $i,j$ for which $p_i,p_j>0$. By the previous argument, also $q^\star_i, q^\star_j>0$. Pick $\vep$ so small that $\boldsymbol{q}(\vep)=\boldsymbol{q}^\star+\vep \delta_{i}-\vep \delta_{j}$ is a probability measure. Then, by the fact that $(w^{\star}, \boldsymbol{q}^\star)$ is the maximizer of $H(\boldsymbol{q} \mid \boldsymbol{p}) - r(w, \bq)$, we obtain that
	\eqn{
	\label{iid-VP-a}
	\frac{\partial}{\partial \vep} [H(\boldsymbol{q}(\vep) \mid \boldsymbol{p}) -  r(w^{\star},\boldsymbol{q}(\vep))]_{\vep=0}=0,
	}
which is equivalent to
	\eqn{
	\label{iid-VP-b}
	\frac{\partial}{\partial q_i} \Big[H(\boldsymbol{q} \mid \boldsymbol{p}) -  r(w^{\star},\boldsymbol{q})\Big]_{\boldsymbol{q}=\boldsymbol{q}^\star}
	=\frac{\partial}{\partial q_j} \Big[H(\boldsymbol{q} \mid \boldsymbol{p}) -  r(w^{\star},\boldsymbol{q})\Big]_{\boldsymbol{q}=\boldsymbol{q}^\star}.
	}
Since this is true for all $i,j$ for which $p_i,p_j>0$, we get \eqref{iid-VP}.
\medskip

\paragraph{\bf Step 3: Identification of a limit of the optimizer $\boldsymbol{q}^\star(B)$.} In this step, we show that, along an appropriate subsequence, $\boldsymbol{q}^\star(B)$ converges to an explicit limit. Note that $\frac{\partial}{\partial q_i} H(\boldsymbol{q}\mid \boldsymbol{p})=\log(q_i/p_i)+1$. We can thus alternatively write \eqref{iid-VP} as
	\eqn{
	\label{iid-VP-c}
	\log(q_i^\star(B)/p_i)-\frac{\partial}{\partial q_i} r_{\beta,B}(w^{\star}(B),\boldsymbol{q})\big|_{\boldsymbol{q}=\boldsymbol{q}^\star(B)}=\lambda.
	}
By \eqref{rwq},
	\eqan{
	\frac{\partial}{\partial q_i} r_{\beta,B}(w^{\star}(B),\boldsymbol{q})&= \frac{\partial}{\partial q_i} [\beta \expec[D(\boldsymbol{q})]/2+G_{\beta, B}\big((s_k(w^{\star}))_{k\geq 1}; \boldsymbol{q})].
	}
 Further, by \eqref{G(sk)-def},
	\eqn{\label{Gsq-def}
	G_{\beta,B}((s_k)_{k\geq 1}; \boldsymbol{q})=\sum_{k\geq 1} q_k I(s_k) + B \Big(2\sum_{k \geq 1} s_kq_k -1\Big)
	+ \expec[D(\boldsymbol{q})] F_{\beta}\Big(\frac{\sum_{k\geq 1} k q_k s_k}{ \expec[D(\boldsymbol{q})] }\Big),
	}
Thus,
	\eqan{
	\label{def-stationary-eq}
	\frac{\partial}{\partial q_i} G_{\beta,B}\big((s_k)_{k\geq 1}; \boldsymbol{q})
	&=I(s_i)+2s_iB+i F_{\beta}\Big(\frac{\sum_{k\geq 1} k q_k s_k}{ \expec[D(\boldsymbol{q})] }\Big)\nn\\
	&\qquad +\expec[D(\boldsymbol{q})]F_{\beta}'\Big(\frac{\sum_{k\geq 1} k q_k s_k}{ \expec[D(\boldsymbol{q})] }\Big)
	\Big(\frac{is_i}{\expec[D(\boldsymbol{q})]}-\frac{i\sum_{k\geq 1} k q_k s_k}{\expec[D(\boldsymbol{q})]^2}\Big).
	}
Take $B\searrow 0$. Extract a subsequence along which $\bq^\star(B)$ converges to a limit, say $\bq(\beta)$ and $w^\star(B) \rightarrow 1$. Along this subsequence, $(s_k(w^\star(B)))_{k\geq 1} \rightarrow (1/2)_{k\geq 1}$.  Then, using that $F_{\beta}(\tfrac{1}{2})= \log\big(\tfrac{1}{2}[1+\e^{-2\beta}]\big)$, \eqref{def-stationary-eq} becomes 
	\eqan{
	\frac{\partial}{\partial q_i} G_{\beta, 0}\big((s_k(1))_{k\geq 1}; \boldsymbol{q}(\beta))
	&= \log 2+ i F_{\beta}(\tfrac{1}{2})= \log 2 +\frac{i}{2} \log\Big(\tfrac{1}{2}[1+\e^{-2\beta}]\Big).
	}	
We conclude that \eqref{iid-VP} becomes
	\eqn{
	\log(q_i(\beta)/p_i)+\beta i/2+\frac{i}{2} \log\Big(\tfrac{1}{2}[1+\e^{-2\beta}]\Big)=\log(q_i(\beta)/p_i) + \frac{i}{2}\log\cosh(\beta)=\lambda,
	}
so that
	\eqn{
	\label{qi-beta-def}
	q_i(\beta)=p_i \cosh(\beta)^{i/2}/c(\beta),
	}
where $c(\beta)=\expec[\cosh(\beta)^{D/2}]<\infty$, since $\expec[\e^{\beta D/2}]<\infty$. 
\medskip

\paragraph{\bf Step 4: Completion of the proof of Theorem \ref{thm-crit-CM-ann-iid}.} 
For all $B\geq 0$, we have 
	\eqn{
	\varphianiid(\beta,B) \geq -H(\bq(\beta) \mid \bp) + \varphiand(\beta,B;\boldsymbol{q}(\beta)).
	}
Moreover, since $w^\star(B) \rightarrow 1$ and $\bq^\star(B)\rightarrow \boldsymbol{q}(\beta)$,
	\begin{eqnarray}
	\varphianiid(\beta,0^+) = \lim_{B\searrow 0} R(w^\star, \bq ^\star) = R(1,\bq(\beta)) &=& -H(\bq(\beta) \mid \bp) +r(1,\bq(\beta))  \notag \\
	& \leq & - H(\bq(\beta) \mid \bp) + \varphiand(\beta,0; \boldsymbol{q}(\beta)).
	\end{eqnarray}
\mo{
Combining the last two inequalities, we obtain, using the definition 
of the spontaneous magnetization
	\begin{eqnarray}
	M^{\textrm{an,D}}(\beta)=\lim_{B \searrow 0} \frac{\varphianiid(\beta,B) -\varphianiid(\beta,0)}{B} &\geq &\lim_{B\searrow 0} \frac{\varphiand(\beta, B) -\varphiand (\beta, 0)}{B}  \notag \\
	& =&  M^{\textrm{an,d}}(\beta; \bq(\beta)).
	\end{eqnarray}	
Since $\beta < \betacaniid$,  
$M^{\textrm{an,D}}(\beta) =0$.
Since 
$M^{\textrm{an,d}}(\beta; \bq(\beta))\geq 0$ 
we conclude that 
$M^{\textrm{an,d}}(\beta; \bq(\beta))=0$. 
}This is equivalent to  
	\eqn{
	\beta\leq \atanh(1/\nu(\boldsymbol{q}(\beta))),
	}
where we write
	\eqn{\label{nuq}
	\nu(\boldsymbol{q})=\frac{\sum_{k\geq 1}k(k-1)q_k}{\sum_{k\geq 1} kq_k}.
	}
We also notice that $\beta\mapsto \atanh(1/\nu(\boldsymbol{q}(\beta)))$ is monotonically decreasing. Hence the equation $\beta =  \atanh(1/\nu(\boldsymbol{q}(\beta)))$ has a unique solution, denoted by $\bar{\beta}_c^{\sss D}$. Moreover, if $\beta\leq \atanh(1/\nu(\boldsymbol{q}(\beta)))$ then $\beta \leq \bar{\beta}^{\sss \rm D}_c$.  Thus we have proved that if $\beta < \betacaniid$, then $\beta \leq \bar{\beta}^{\sss \rm D}_c$. Therefore, $\betacaniid \leq \bar{\beta}^{\sss \rm D}_c$. Furthermore, for any $\beta\geq 0$, 
	\eqn{
	\nu(\boldsymbol{q}(\beta))\geq \nu,
	}
since
	\eqn{
	\nu(\boldsymbol{q}(\beta))=\frac{\expec[(D^\star-1)\cosh(\beta)^{D^\star/2}]}{\expec[\cosh(\beta)^{D^\star/2}]}\geq \expec[D^\star-1]=\nu,
	}
with strict inequality unless $D^\star$ is constant or $\beta=0$. Thus, we obtain that when $\bar{\beta}^{\sss \rm D}_c>0$ and the degrees are not a.s.\ constant,
	\eqn{
	\betacaniid \leq \bar{\beta}^{\sss \rm D}_c=\atanh(1/\nu(\boldsymbol{q}(\bar{\beta}^{\sss \rm D}_c)))<\atanh(1/\nu)=\beta_c^{\rm qu}.
	}
This completes the proof of Theorem \ref{thm-crit-CM-ann-iid}.
\qed

\medskip
\begin{remark}[Equality $\betacaniid = \bar{\beta}^{\sss \rm D}_c$]
\label{rem-eq-crit-betas-iid-rep}
{\rm We believe that $\betacaniid = \bar{\beta}^{\sss \rm D}_c$, but lack a proof of this fact. Indeed, we expect that the magnetization $\Maniid(\beta,B)\approx 0$ when $(\beta,B)\approx (\betacaniid, 0)$ with $\beta\geq \betacaniid, B\geq 0$. This, in turn, would mean that there exists an optimizer $w^\star(\beta,B)$ that is close to 1, so that there also exists an optimizer $(s^\star_k(\beta,B))_{k\geq 1}$ that is close to $\tfrac{1}{2}$. As a result, there should also be an optimizer $\boldsymbol{q}_k(\beta,B)$ that is close to $\boldsymbol{q}(\beta),$ which should thus be close to critical. This should mean that $\betacaniid =\atanh(1/\nu(\boldsymbol{q}(\betacaniid)))$. Our proof, however, does not yield this, since we have insufficient control over the optimizers.}
\end{remark}

\begin{remark}[Critical behavior]
\label{rem-crit-iid}
{\rm Assume that $\betacaniid = \bar{\beta}^{\sss \rm D}_c$, as we argue above, as well as $\expec[\e^{\betacaniid D/2}]<\infty$. Then, the solution $q_i(\beta)$ in \eqref{qi-beta-def} has exponential tails, since $\cosh(\beta)<\e^{\beta}$. Thus, by \eqref{def-stationary-eq}, one can expect the solution $\boldsymbol{q}(\beta,B)$ to have exponential tails as well for $B>0$ small enough and $\beta>\betacaniid$ with $\beta-\betacaniid$ sufficiently small. As a result, one can expect that the critical behavior is related to that of an annealed system with a degree distribution with exponential tails, which should be equal to those for the Curie-Weiss model. Since this is true for {\em all} degree distributions with $\expec[\e^{\betacaniid D/2}]<\infty$, this suggests that the power-law universality class does not arise for i.i.d.\ degrees, which is quite surprising.
}
\end{remark}
\medskip

\begin{example}[Poisson degrees]
\label{rem-example-Poisson}
{\rm Let us compute the critical inverse temperature when $D$ is the Poisson distribution with intensity $\lambda >0$, that is $\prob (D=i)=p_i$, $i\ge 0$, with 
   \eqan{
  p_i = \e^{-\lambda} \frac{\lambda^i}{i!}\, .
    }   
    From \eqref{qi-beta-def} we get the optimizer for all $\beta>0$:
   \eqan{
 q_i(\beta)= \frac{p_i\cosh(\beta)^{i/2}}{c(\beta)}=\frac{1}{\e^{\lambda}c(\beta)}\frac{(\lambda \sqrt{\cosh(\beta)})^i}{i!}.
    }
Hence, the expected forward degree corresponding to distribution $\boldsymbol{q}(\beta)=(q_i(\beta))_i$ is
   \eqan{\label{eq-nu-q}
   \nu(\boldsymbol{q}(\beta))=\frac{\sum_{i\geq 0} i(i-1)q_i(\beta)}{\sum_{i\geq 0} iq_i(\beta)}=\frac{\sum_{i\geq 2} (\lambda \sqrt{\cosh(\beta)})^i/(i-2)!}{\sum_{i\geq 1} (\lambda \sqrt{\cosh(\beta)})^i/(i-1)!}= \lambda \sqrt{\cosh(\beta)}.
   }
The upper bound $\bar{\beta}^{\sss \rm D}_c$ satisfies \eqref{bcand}, or equivalently
  \eqan{
  \quad \tanh(\bar{\beta}^{\sss \rm D}_c)= \Big(\lambda \sqrt{\cosh(\bar{\beta}^{\sss \rm D}_c)}\Big)^{-1}, \nn
     } 
that can be rewritten as
\eqan{
 \quad \cosh(\bar{\beta}^{\sss \rm D}_c) = \lambda^2(\cosh(\bar{\beta}^{\sss \rm D}_c)^2-1).
}
Solving this equation for $\cosh(\bar{\beta}^{\sss \rm D}_c)$ and inverting the hyperbolic cosine, we obtain
	\eqan{
 	\betacaniid\leq \bar{\beta}^{\sss \rm D}_c = -\log (2 \lambda^2) + \log   \left[1+\sqrt{1+4 \lambda^4}+\sqrt{2 +2 \sqrt{1+4 \lambda^4}}\right].
	}

On the other hand, the quenched critical value is  $\betacqud = \atanh(1/\nu(\boldsymbol{p}))$, where $\nu(\boldsymbol{p)}=\lambda$ (as can be readily seen by setting $\beta=0$ in  \eqref{eq-nu-q}), that is
	\eqn{
	\betacqud= \tfrac{1}{2} \log \Big(\tfrac{\lambda+1}{\lambda-1}\Big).
	} 

Let us observe that, while $\betacaniid$ exists for any $\lambda>0$, the quenched critical value is finite only if $\lambda >1$, and as $\lambda \searrow 1$, $\betacqud\rightarrow \infty$, while $ \betacaniid \leq \bar{\beta}^{\sss \rm D}_c \rightarrow \log [(1+\sqrt{5}+\sqrt{2+2\sqrt{5}})/2]$.
}\end{example}

\begin{example}[Geometric degrees]
\label{rem-example-Geo}
{\rm We next investigate the case of geometric degrees, i.e., we assume that $\prob(D=i)=p_i$ with
	\be
	p_i=(1-p)^{i-1}p,\quad  i\geq 1,
	\ee
for some $0<p<1$. 

\changed{As a first remark, to make sure
that the annealed pressure is finite, we use Prop. \ref{prop-inf-press-iid}, and thus require}{}
\colrev{\be
\mathbb{E}[\e^{\beta D/2}] < \infty,
\ee
which amounts to 
\be
\label{condizione}
\beta< -2 \log(1-p).
\ee
 }
Then, by \eqref{qi-beta-def},
	\be
	q_i(\beta)= \frac{(1-p)^{i-1} p (\sqrt{\cosh(\beta)})^i}{c(\beta)},\quad i\geq 1.
	\ee
Hence,
	\eqan{
	\sum_{i\ge 1}i  q_i(\beta)&= \frac{p \sqrt{\cosh(\beta)} }{c(\beta)} \sum_{i\ge 1} i ((1-p) \sqchb)^{i-1}\nn\\
	&= \frac{p \sqrt{\cosh(\beta)} }{c(\beta)}  \left (1-(1-p)\sqchb \right)^{-2},
	}
\colrev{where the summability condition $(1-p)\sqchb < 1$ is guaranteed by the assumption \eqref{condizione}.
Furthermore,}
	\be
	\sum_{i\ge 1}i (i-1) q_i(\beta) = \frac{p\sqchb}{c(\beta)} \frac{2(1-p)\sqchb}{\left (1-(1-p)\sqchb \right)^3}.
	\ee
Then, from \eqref{nuq}, we obtain the expected forward degree 
	\be
	\nu({\bf q}(\beta)) = \frac{2(1-p)\sqchb}{1-(1-p)\sqchb}.
	\ee
Thus, in this case, the equation for  $\bar{\beta}^{\sss \rm D}_c$ in \eqref{bcand} reads as
	\be
	\label{eq-beta-geom}
	\tanh(\bar{\beta}^{\sss \rm D}_c)=\left ( 2(1-p)\sqrt{\cosh(\bar{\beta}^{\sss \rm D}_c)}) \right)^{-1} -\frac 1 2.
	\ee
This equation looks similar to the corresponding equation for the Poisson case, but is in fact quite different.
Setting,  for the sake of notation, $x=\sqrt{\cosh(\bar{\beta}^{\sss \rm D}_c)}$ and $r=1-p$, we can rewrite  \eqref{eq-beta-geom} as
	\be
	\frac{\sqrt{x^4-1}}{x^2}=\frac{1-r x}{2 r x}
	\ee
that, after some manipulation, can be transformed into
	\be
	x^2(3r^2 x^4 +2 r x^3 -x^2 -4r^2)=0.
	\ee
Recalling the definition of  $x$, we conclude that the equation for the critical  $\bar{\beta}^{\sss \rm D}_c$ is
	\be
	\label{equu}
	3r^2 x^4 +2 r x^3 -x^2 -4r^2=0, \quad \mbox{with}\quad x \ge 1.
	\ee
The polynomial  $f_r(x)=3r^2 x^4 +2 r x^3 -x^2 -4r^2$ (with $r>0$) has a local negative maximum at  $x=0$ and  for $x>0$ a unique negative local minimum  at $x_+(r)=\frac{1}{4 r}(\frac{\sqrt{33}}{3}-1)$. Since $f_r^\prime(x)>0$ for $x>x_+(r)$, we conclude that there exists a unique positive solution $x^\star(r) \in (x_+(r),\infty)$ to the equation $f_r(x)=0$. Now we have to check that $x^\star(r)$ corresponds to an admissible solution to \eqref{equu}, that is,  $1\le x^\star(r)<r^{-1}$ for all $0<r\le 1$, i.e., for all $0\le p < 1$.  Observe that we can bound $f_r(x)$ for $x<1$ as
	\be
	f_r(x)= r^2(3x^4-4)+2r x^3 -x^2 < r^2(3x^4-4)+ x^2 (2r-1),
	\ee
and, since $3 x^4-4<-1$ and $x^2<1$,
	\be
	 r^2(3x^4-4)+ x^2 (2r-1) < -r^2+2 r -1 = -(1-r)^2\equiv -p^2.
	\ee
Thus, $f_r(x)<0$ if $x<1$ (with $f_r(1)=0$ if and only if  $r=1$)  and this shows that $x^\star(r) \ge 1$. 
Now we show that $x^*(r)<r^{-1}$ by contradiction. Dropping the dependence of $x^\star(r)$ on $r$, we write  
	\be
	f_r(x^\star)=3 (r x^\star)^2 {x^\star}^2 + 2(rx^\star) {x^\star}^2-{x^\star}^2-4 r^2,
	\ee
and assuming  $x^\star>1$ we obtain,
	\be
	f_r(x^\star) > 4 {x^\star}^2-4 r^2 >0,
	\ee
 (where the last inequality follows from the fact that $x^\star>1$ and $r<1$) in contradiction with the fact that $x^\star$ is the root of $f_r(x^\star)=0$ . The previous inequality proves the claim.
Therefore we conclude that, for any $0< p < 1$, the critical inverse temperature with \changed{i.i.d.\ }{} geometric degrees is bounded from above by
	\be
	\bar{\beta}^{\sss \rm D}_c= \ln \left ( {x^\star(p)}^2 + \sqrt{{x^\star(p)}^4-1}\right) ,
	\ee
where $x^\star(p)$ is the unique solution to the fourth order equation
	\be\label{eqn-geom-eqn}
	3 (1-p)^2 x^4 + 2 (1-p) x^3 - x^2 -4 (1-p)^2=0.
	\ee
On the other hand, the quenched critical value with geometric degrees is by \eqref{betac-an-d} equal to
 	\be
 	\betacqud= \atanh \left ( \frac{p}{2(1-p)}\right ).
 	\ee
Then, we have  $\betacqud=\infty$ for $p\ge \frac 2 3$, while in the annealed case with i.i.d.\ degrees  there is a finite critical value for any $0<p<1$.
}
\end{example}

\medskip
\paragraph{\bf Acknowledgement.} The work of RvdH is supported by the Netherlands Organisation for Scientific Research (NWO) through VICI grant 639.033.806 and the Gravitation {\sc Networks} grant 024.002.003. \col{C. Giardin\`a and C. Giberti acknowledge financial supports from Fondo di Ateneo per la Ricerca, Universit\`a di Modena e Reggio Emilia.} The work of V. H. Can is supported by  the fellowship no. 17F17319 of the Japan Society for the Promotion of Science, and by the  Vietnam National Foundation for Science and Technology Development (NAFOSTED) under grant number 101.03--2019.310.

\end{document}